


\documentclass[11pt,fleqn]{article}

\usepackage{amsthm,amsmath}
\usepackage{amssymb,dsfont,mathrsfs}



\textheight 22.0cm 
\textwidth 16.5cm
\topmargin 0.0cm
\headheight 0.0cm
\headsep 0.0cm
\footskip 1.0cm 
\oddsidemargin -0.25cm 


\numberwithin{equation}{section}
\newtheorem{theorem}{Theorem}[section]

\newtheorem{proposition}{Proposition}[section]
\newtheorem{corollary}{Corollary}[section]
\newtheorem{remark}{Remark}[section]

\newtheorem{definition}{Definition}[section]
\def\ba{\boldsymbol{a}}
\def\be{\boldsymbol{e}}

\def\bh{\boldsymbol{h}}
\def\bj{\boldsymbol{j}}

\def\bu{\boldsymbol{u}}
\def\bw{\boldsymbol{w}}
\def\bx{\boldsymbol{x}}
\def\by{\boldsymbol{y}}
\def\bz{\boldsymbol{z}}
\def\bJ{\boldsymbol{J}}
\def\bX{\boldsymbol{X}}
\def\bY{\boldsymbol{Y}}
\def\balpha{\boldsymbol{\alpha}}
\def\bbeta{\boldsymbol{\beta}}
\def\bpi{\boldsymbol{\pi}}

\def\bxi{\boldsymbol{\xi}}
\def\bzero{\mathbf{0}}
\def\bone{\mathbf{1}}
\def\btwo{\boldsymbol{2}}
\def\calA{{\cal A}}
\def\calB{{\cal B}}
\def\calN{{\cal N}}
\def\calS{{\cal S}}
\def\calU{{\cal U}}
\def\calV{{\cal V}}

\def\calL{{\cal L}}

\title{Positive recurrence and transience of  a two-station network with server states}
\author{Toshihisa Ozawa \\ 
Faculty of Business Administration, Komazawa University \\
1-23-1 Komazawa, Setagaya-ku, Tokyo 154-8525, Japan \\
E-mail: toshi@komazawa-u.ac.jp }
\date{\today}

\begin{document}

\maketitle

\begin{abstract} 
We study positive recurrence and transience of a two-station network in which the behavior of the server in each station is governed by a Markov chain with a finite number of server states; this service process can represent various service disciplines such as a non-preemptive priority service and $K$-limited service. 
Assuming that exogenous customers arrive according to independent Markovian arrival processes (MAPs),  we represent the behavior of the whole network as a continuous-time Markov chain and, by the uniformization technique, obtain the corresponding discrete-time Markov chain, which is positive recurrent (transient) if and only if the original continuous-time Markov chain is positive recurrent (resp.\ transient). 
This discrete-time Markov chain is a four-dimensional skip-free Markov modulated reflecting random walk (MMRRW) and, applying several existing results of MMRRWs to the Markov chain, we obtain conditions on which the Markov chain is positive recurrent and on which it is transient. The conditions are represented in terms of the difference of the input rate and output rate of each queue in each induced Markov chain. 
In order to demonstrate how our results work in two-station networks, we give several examples.

\smallskip
{\it Keywards}: Multiclass queueing network, multidimensional reflecting random walk, Markov chain, positive recurrence, stability

\smallskip
{\it Mathematical Subject Classification}: 60J10, 60J27, 60K25
\end{abstract}

%
%
%
\section{Introduction} \label{sec:introduction}

Stability and instability of queueing networks has been intensively studied for the last few decades, especially, through a connection between the stability of queueing networks and that of the corresponding fluid models (see, for example, Bramson \cite{Bramson08}, Chen \cite{Chen95}, Dai \cite{Dai95,Dai96}, Down and Meyn \cite{Down97},  Gamarnik and Hasenbein \cite{Gamarnik05} and Meyn \cite{Meyn95}). 
A main result in those studies is that the stability of a fluid model implies stability of corresponding queueing networks (see, for example, Chen \cite{Chen95} and Dai \cite{Dai95}). Several versions of its converse have also been obtained (see, for example, Dai \cite{Dai96} and Gamarnik and Hasenbein \cite{Gamarnik05}). 
It is not doubted that the fluid model is one of the most useful tools to investigate stability and instability of queueing networks. However, at the same time, it also has the following limitations: 
\begin{itemize}
\item Additional equations are needed for representing a particular service discipline and to describe such equations is sometimes difficult. For example, a non-preemptive priority discipline cannot be represented in an ordinary framework of fluid model. 
\item Fluid models are defined using only the first-order parameters such as the mean inter-arrival times and  mean service times, and hence we cannot apply them to the case where the stability of a queueing network depends on more than just the first-order parameters (see, for example, Dai et al.~\cite{Dai04} and Tezcan \cite{Tezcan13}). 
\end{itemize} 

In this paper, avoiding these limitations, we exploit another approach for studying stability and instability of queueing networks. We consider a queueing network in which the behavior of servers in each station is governed by a Markov chain with a finite number of server states. Such a service model is called a Markovian service process (MSP in short; see, for example, Ozawa \cite{Ozawa04}) and MSPs can represent various service disciplines such as a non-preemptive priority service. 
The whole network is represented as a continuous-time version of Markov modulated reflecting random walk (MMRRW in short), which is a multidimensional skip-free reflecting random walk whose transition probabilities are modulated according to the state of a Markov chain with a finite state space (see Ozawa \cite{Ozawa12}, where it is called a reflecting random walk with a background process). 
Here we should note that, since the stochastic model representing the queueing network we consider is a Markov chain with a countable state space, stability of the network corresponds to positive recurrence of the Markov chain and instability of the network to transience of the Markov chain; we do not treat the case where the Markov chain is null recurrent. We, therefore, compatibly use those words in the paper. 
By the uniformization technique, we can obtain a discrete-time MMRRW corresponding to the continuous-time MMRRW and, by the results of Ozawa \cite{Ozawa12}, we can expect to derive conditions on which the network is stable and on which it is instable. 
However, it is not so easy to apply the results of Ozawa \cite{Ozawa12} to a general queueing network having many stations and hence, in this paper, we will focus on a two-station network depicted in Fig.~\ref{fig:twostation}, in order to take the first step to studying general queueing networks. 
In the two-station network, exogenous customers arriving at station 1 join queue 1 (Q$_1$) and, after completing service there, they visit station 2 and join queue 2 (Q$_2$); after completing service in station 2, the customers reenter station 2 with probability $p$ and join queue 3 (Q$_3$). Exogenous customers arriving at station 2 join Q$_3$ and, after completing service there, they visit station 1 and join queue 4 (Q$_4$). In each station, there is just one server that serves customers in two queues according to a certain service discipline. We assume that exogenous customers arrive according to independent Markovian arrival processes (MAPs in short) and that the service times of customers in each queue are subject to a phase-type distribution. 
This model has a remarkable feature that the nominal condition, which means that the total offered load of each station is less than one, is not sufficient for the model to be stable, and its variations have been dealt with in a lot of literature (see, for example, Bramson \cite{Bramson08}, Dai et al.~\cite{Dai04} and Kumar \cite{Kumar93}). Hence we think it is the best model for our aim. 
%

%
%
\begin{figure}[bht]
\begin{center}
\setlength{\unitlength}{0.8mm}
\begin{picture}(100,42)(0,0)
%
\thicklines
\put(2,15){\makebox(0,0){\normalsize $\lambda_1$}}
\put(5,15){\vector(1,0){7}}

\put(14,6){\makebox(0,0){\normalsize Q$_1$}}
\put(27,11){\makebox(0,0){\normalsize $\mu_1$}}
\put(12,20){\line(1,0){6}}
\put(12,10){\line(1,0){6}}
\put(18,10){\line(0,1){10}}
\multiput(22,15)(2,0){6}{\line(1,0){1}}
\put(36,15){\vector(1,0){15}}

\put(42,40){\makebox(0,0){\normalsize Q$_4$}}
\put(27,26){\makebox(0,0){\normalsize $\mu_4$}}
\put(37,25){\line(1,0){6}}
\put(37,35){\line(1,0){6}}
\put(37,25){\line(0,1){10}}
\multiput(22,30)(2,0){6}{\line(1,0){1}}
\put(19,30){\vector(-1,0){15}}

\put(28,0){\makebox(0,0){\normalsize Station 1}}
\put(20,5){\line(0,1){35}}
\put(20,5){\line(1,0){15}}
\put(20,40){\line(1,0){15}}
\put(35,5){\line(0,1){35}}
%
\thicklines
\put(54,6){\makebox(0,0){\normalsize Q$_2$}}
\put(67,18){\makebox(0,0){\normalsize $\mu_2$}}
\put(52,20){\line(1,0){6}}
\put(52,10){\line(1,0){6}}
\put(58,10){\line(0,1){10}}
\multiput(62,15)(2,0){6}{\line(1,0){1}}
\put(88,20){\makebox(0,0){\normalsize $p$}}
\put(99,11){\makebox(0,0){\normalsize $1-p$}}
\put(76,15){\vector(1,0){28}}
\put(91,15){\line(0,1){13}}
\put(91,28){\vector(-1,0){8}}

\put(99,32){\makebox(0,0){\normalsize $\lambda_3$}}
\put(95,32){\vector(-1,0){12}}

\put(82,40){\makebox(0,0){\normalsize Q$_3$}}
\put(67,33){\makebox(0,0){\normalsize $\mu_3$}}
\put(77,25){\line(1,0){6}}
\put(77,35){\line(1,0){6}}
\put(77,25){\line(0,1){10}}
\multiput(62,30)(2,0){6}{\line(1,0){1}}
\put(59,30){\vector(-1,0){15}}

\put(68,0){\makebox(0,0){\normalsize Station 2}}
\put(60,5){\line(0,1){35}}
\put(60,5){\line(1,0){15}}
\put(60,40){\line(1,0){15}}
\put(75,5){\line(0,1){35}}

\end{picture}
\caption{A two-station network}
\label{fig:twostation}
\end{center}
\end{figure}
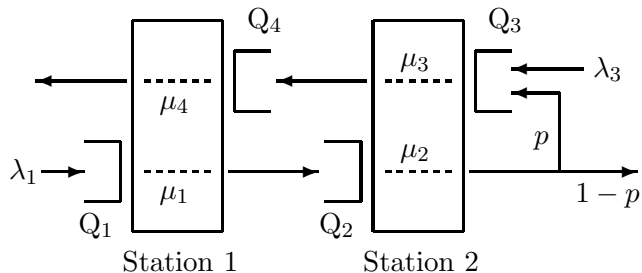

A key concept for investigating stability and instability of an MMRRW is {\it induced Markov chain}, introduced by Malyshev \cite{Malyshev93} and Fayolle et al.\ \cite{Fayolle95}. In our model, an induced Markov chain corresponds to an MMRRW obtained by supposing that some queues are saturated by customers, where the states of the saturated queues are removed from the original MMRRW. 
If the induced Markov chain is positive recurrent, we define the mean output rate of each queue as the expected number of customers departing from the queue per unit time in steady state and the mean input rate of each queue as the expected number of customers joining the queue per unit time in steady state. 
The conditions on which the two-station network is stable and on which it is instable are represented in terms of the difference between the mean input rate and mean output rate of each queue in each induced Markov chain that is positive recurrent. 
Since the mean input rates are given in terms of the mean output rates and the mean arrival rates of exogenous customers, the mean output rates are essential for determining the stability of the model. From this point, we can understand that, if all the mean output rates are given in terms of the first-order parameters, then the stability condition of the model is given in terms of the first-order parameters. Three examples we will consider in the paper fall into the case.

The rest of the paper is organized as follows. 
In Section \ref{sec:modelandresults}, the two-station network we consider is described in detail and 
the main results of the paper are stated with a typical example: a two-station network with a non-preemptive priority service. This section also functions as a summary of the paper. 
In Section \ref{sec:preliminary}, we make several preparations, where a Markov modulated reflecting random walk and the concept of induced Markov chain are introduced and related theorems are stated. 
In Section \ref{sec:positive}, we prove the main theorem using the theorems stated in Section \ref{sec:preliminary}. 
In Section \ref{sec:examples}, we consider two examples: one is a two-station network with a preemptive-resume priority service and the other that with a $(1,K)$-limited service. In the latter example, the stability region of the model depends on the value of parameter $K$.
%

%
%
%
\section{Model description and main results} \label{sec:modelandresults}

\subsection{Two-station network}

We explain the two-station network depicted in Fig.~\ref{fig:twostation}, in detail.

\medskip
{\it Network configuration.}\quad 
Exogenous customers arrive at Q$_1$ as class-1 customers and at Q$_3$ as class-3 customers. After completing service at station~1, customers in Q$_1$ move to station~2 and join Q$_2$ as class-2 customers; customers in Q$_2$ next reenter station~2 and join Q$_3$ as class-3 customers with probability $p$ or depart from the system with probability $1-p$. After completing service at station~2, customers in Q$_3$ moves to station~1 and join Q$_4$ as class-4 customers; customers in Q$_4$ next depart from the system. 
We define a set $N$ as $N=\{1,2,3,4\}$ and, for $l\in N$, denote by $X_l(t)$ the number of customers in Q$_l$ including one being served at time $t$; $\bX(t)$ is the vector of $X_l(t)$, i.e., $\bX(t)=(X_1(t),X_2(t),X_3(t),X_4(t))$.
When $p=1$ and $\lambda_3=0$, the model is a usual reentrant line; when $p=0$, it is a two-station network with two-parallel-customer-flows. 

\medskip
{\it Arrival processes.}\quad 
The exogenous arrival process of class-1 customers and that of class-3 customers are subject to independent Markovian arrival processes (MAPs); MAPs are tractable arrival processes and they can represent correlated interarrival times. 
For $i=1,3$, we denote by $(\bar{C}_i,\bar{D}_i)$ the representation of the MAP for class-$i$ customers, where $\bar{C}_i$ is a matrix of phase transition rates without arrivals and $\bar{D}_i$ is that of phase transition rates with an arrival; the diagonal elements of $\bar{C}_i$ are set so that the sum of elements in each row of $\bar{C}_i+\bar{D}_i$ becomes zero (see, for example, Latouche and Ramaswami \cite{Latouche99}).  
We assume that the phase set of the MAP for class-$i$ customers is given by $S^a_i=\{1,2,...,s^a_i\}$, where $s^a_i$ is some positive integer; this $s^a_i$ is also the dimension of $\bar{C}_i$ and that of $\bar{D}_i$.
$\bar{C}_i+\bar{D}_i$ is the infinitesimal generator of the Markov chain that represents the phase process of the MAP for class-$i$ customers, and we assume it to be irreducible. Hence, $\bar{C}_i+\bar{D}_i$ is positive recurrent and we denote by $\bpi_i^*=(\pi_{ij}^*,j\in S^a_i)$ the stationary phase distribution. The mean arrival rate of the MAP for class-$i$ customers, denoted by $\lambda_i$, is given as $\lambda_i=\bpi_i^* \bar{D}_i\bone$, where $\bone$ is a column vector of $1$'s whose dimension is determined in context. 
For $i=1,3$, we denote by $J^a_i(t)$ the phase state of the MAP for class-$i$ customers at time $t$. 

\medskip
{\it Service processes.}\quad 
We assume that each station has a single server and the behavior of each server is represented as a two-class Markovian service process (MSP in short, see Ozawa \cite{Ozawa04}). For $i=1,2$, we denote by $J^s_i(t)$ the state of the server in station~$i$ at time $t$ and $S^s_i$ the set of states of the same server; we assume $S^s_i$ is finite. 
Furthermore, we assume that $S^s_1$ is composed of mutually disjoint subsets $S^s_{10}$, $S^s_{11}$ and $S^s_{14}$ and that $S^s_2$ is composed of mutually disjoint subsets $S^s_{20}$, $S^s_{22}$ and $S^s_{23}$, i.e., 
\[
S^s_1 = S^s_{10} \cup S^s_{11} \cup S^s_{14},\quad 
S^s_2 = S^s_{20} \cup S^s_{22} \cup S^s_{23}.
\]
When $J^s_1(t)\in S^s_{10}$ ($J^s_2(t)\in S^s_{20}$), the server in station~1 (resp.\ station~2) is idle or engaging in work other than service for customers; when $J^s_1(t)\in S^s_{11}$ ($J^s_2(t)\in S^s_{22}$), it is engaging in service for a class-1 (resp.\ class-2) customer; when $J^s_1(t)\in S^s_{14}$ ($J^s_2(t)\in S^s_{23}$), it is engaging in service for a class-4 (resp.\ class-3) customer. 
We assume the service process in station~1, $\{J^s_1(t)\}$, depends on $X_1(t)$ and $X_4(t)$ only through the events $\{X_1(t)=0\}$, $\{X_1(t)=1\}$, $\{X_1(t)\ge 2\}$, $\{X_4(t)=0\}$, $\{X_4(t)=1\}$ and $\{X_4(t)\ge 2\}$. 
Let $\bar{T}_1^{00}$ be the $|S^s_1|$-dimensional square matrix of transition rates for $\{J^s_1(t)\}$ without service completions when $X_1(t)=0$ and $X_4(t)=0$, where $|A|$ is the cardinality of set $A$. Also, let $\bar{T}_1^{+0}$ be the $|S^s_1|$-dimensional square matrix of transition rates for $\{J^s_1(t)\}$ without service completions when $X_1(t)\ge 1$ and $X_4(t)=0$; let $\bar{T}_1^{0+}$ be that without service completions when $X_1(t)=0$ and $X_4(t)\ge 1$; let $\bar{T}_1^{++}$ be that without service completions when $X_1(t)\ge 1$ and $X_4(t)\ge 1$. 
Let $\bar{T}_1^{1^*0}$ be the $|S^s_1|$-dimensional square matrix of transition rates for $\{J^s_1(t)\}$ with a service completion of class-1 customer when $X_1(t)=1$ and $X_4(t)=0$; in this case, the number of class-1 customers becomes zero after the service completion. Let $\bar{T}_1^{2^*0}$ be the $|S^s_1|$-dimensional square matrix of transition rates for $\{J^s_1(t)\}$ with a service completion of class-1 customer when $X_1(t)\ge 2$ and $X_4(t)=0$; in this case, the number of class-1 customers is still nonzero after the service completion. 
Let $\bar{T}_1^{01^*}$ be the $|S^s_1|$-dimensional square matrix of transition rates for $\{J^s_1(t)\}$ with a service completion of class-4 customer when $X_1(t)=0$ and $X_4(t)=1$; let $\bar{T}_1^{02^*}$ be that with a service completion of class-4 customer when $X_1(t)=0$ and $X_4(t)\ge 2$. 
Let $\bar{T}_1^{1^*+}$ be the $|S^s_1|$-dimensional square matrix of transition rates for $\{J^s_1(t)\}$ with a service completion of class-1 customer when $X_1(t)=1$ and $X_4(t)\ge 1$; let $\bar{T}_1^{2^*+}$ be that with a service completion of class-1 customer when $X_1(t)\ge 2$ and$X_4(t)\ge 1$. 
We analogously define $\bar{T}_1^{+1^*}$ and $\bar{T}_1^{+2^*}$. Note that $\bar{T}_1^{00}$ and the following sums of the matrices are infinitesimal generators, which we do not assume to be irreducible. 
\begin{align*}
&\bar{T}_1^{+0} + \bar{T}_1^{1^*0},\quad \bar{T}_1^{+0} + \bar{T}_1^{2^*0},\quad 
\bar{T}_1^{0+} + \bar{T}_1^{01^*},\quad \bar{T}_1^{0+} + \bar{T}_1^{02^*},\cr
&\bar{T}_1^{++} + \bar{T}_1^{1^*+} + \bar{T}_1^{+1^*},\quad 
\bar{T}_1^{++} + \bar{T}_1^{1^*+} + \bar{T}_1^{+2^*},\quad 
\bar{T}_1^{++} + \bar{T}_1^{2^*+} + \bar{T}_1^{+1^*},\cr 
&\bar{T}_1^{++} + \bar{T}_1^{2^*+} + \bar{T}_1^{+2^*}.
\end{align*}
Let $U_1^{0^*0}$ be the transition probability matrix for $\{J^s_1(t)\}$ when $X_1(t)=0$, $X_4(t)=0$ and a class-1 customer arrived at time $t$; let $U_1^{00^*}$ be that when $X_1(t)=0$, $X_4(t)=0$ and a class-4 customer arrived. 
Let $U_1^{+^*0}$ be the transition probability matrix for $\{J^s_1(t)\}$ when $X_1(t)\ge 1$, $X_4(t)=0$ and a class-1 customer arrived at time $t$; let $U_1^{+0^*}$ be that when $X_1(t)\ge 1$, $X_4(t)=0$ and a class-4 customer arrived. 
We analogously define $U_1^{0^*+}$, $U_1^{0+^*}$, $U_1^{+^*+}$ and $U_1^{++^*}$. 
With respect to station~2, we analogously define $\bar{T}_2^{ij}$ for $i,j=0,+$, $\bar{T}_2^{1^*0}$, $\bar{T}_2^{2^*0}$, $\bar{T}_2^{01^*}$, $\bar{T}_2^{02^*}$, $\bar{T}_2^{1^*+}$, $\bar{T}_2^{2^*+}$, $\bar{T}_2^{+1^*}$ and $\bar{T}_2^{+2^*}$, where we replace class-1 with class-3 and class-4 with class-2; we also define $U_1^{i^*j}$ and $U_1^{ij^*}$ for $i,j=0,+$. 

\medskip
{\it An example of service process: non-preemptive priority service.}\quad 
We consider the model in which class-4 customers have non-preemptive priority over class-1 customers and class-2 customers have non-preemptive priority over class-3 customers. 
For $i\in N$, we assume the service times of class-$i$ customers are subject to a phase-type (PH) distribution with representation $(\bbeta_i,\bar{H}_i)$, where $\bbeta_i$ and $\bar{H}_i$ are respectively the initial distribution and infinitesimal generator (restricted to transient states) of an absorbing Markov chain, by which the PH distribution is defined (see, for example, Latouche and Ramaswami \cite{Latouche99}). 
We define $\bh_i$ as $\bh_i=-\bar{H}_i \bone$ and denote by $\mu_i$ the reciprocal of the mean service time of class-$i$ customers; $\mu_i$ is given by $\mu_i=\left\{\bbeta_i (-\bar{H}_i)^{-1} \bone\right\}^{-1}$. 
PH distributions are known to be dense in the space of distributions on $[0,\infty)$.  
Let the phase set of the PH-distribution for class-1 customers be given by $S^s_{11}=\{2,3,...,s^s_1+1\}$ and that for class-4 customers by $S^s_{14}=\{s^s_1+2, s^s_1+3,...,s^s_1+s^s_4+1\}$, where $s^s_1$ and $s^s_4$ are some positive integers. 
Let the phase set of the PH-distribution for class-3 customers be given by $S^s_{23}=\{2,3,...,s^s_3+1\}$ and that for class-2 customers by $S^s_{22}=\{s^s_3+2, s^s_3+3,...,s^s_3+s^s_2+1\}$, where $s^s_2$ and $s^s_3$ are some positive integers.
Then, letting $S^s_{10}=S^s_{20}=\{1\}$, we have $S^s_1=\{1,2,...,s^s_1+s^s_4+1\}$ and $S^s_2=\{1,2,...,s^s_3+s^s_2+1\}$. 
A representation of the MSP in station~1 is given in block form as follows. 
\[
\bar{T}_1^{00} = \begin{pmatrix} 
0 & \bzero^\top & \bzero^\top \cr 
\bone & -I & O 
\cr \bone & O & -I \end{pmatrix},\quad 
\bar{T}_1^{+0} = \begin{pmatrix} 
-1 & \bbeta_1 & \bzero^\top \cr 
\bzero & \bar{H}_1 & O \cr 
\bzero & \bone \bbeta_1 & -I \end{pmatrix},\quad 
\bar{T}_1^{1^*0} = \begin{pmatrix} 
0 & \bzero^\top & \bzero^\top \cr 
\bh_1 & O & O \cr 
\bzero & O & O \end{pmatrix},
\]
\[ 
\bar{T}_1^{2^*0} = \begin{pmatrix} 
0 & \bzero^\top & \bzero^\top \cr 
\bzero & \bh_1 \bbeta_1 & O \cr 
\bzero & O & O \end{pmatrix},\quad 
\bar{T}_1^{0+} = \begin{pmatrix} 
-1 & \bzero^\top & \bbeta_4 \cr 
\bzero & -I & \bone \bbeta_4 \cr 
\bzero & O & \bar{H}_4 \end{pmatrix},\quad 
\bar{T}_1^{01^*} = \begin{pmatrix} 
0 & \bzero^\top & \bzero^\top \cr 
\bzero & O & O \cr 
\bh_4 & O & O \end{pmatrix},
\]
\[
\bar{T}_1^{02^*} = \begin{pmatrix} 
0 & \bzero^\top & \bzero^\top \cr 
\bzero & O & O \cr 
\bzero & O & \bh_4 \bbeta_4 \end{pmatrix},\quad 
\bar{T}_1^{++} = \begin{pmatrix} 
-1 & \bzero^\top & \bbeta_4 \cr 
\bzero & \bar{H}_1 & O \cr 
\bzero & O & \bar{H}_4 \end{pmatrix},
\]
\[
\bar{T}_1^{1^*+} = \bar{T}_1^{2^*+} = \begin{pmatrix} 
0 & \bzero^\top & \bzero^\top \cr 
\bzero & O & \bh_1 \bbeta_4 \cr 
\bzero & O & O \end{pmatrix},\quad 
\bar{T}_1^{+1^*} = \begin{pmatrix} 
0 & \bzero^\top & \bzero^\top \cr 
\bzero & O & O \cr 
\bzero & \bh_4 \bbeta_1 & O \end{pmatrix},
\]
\[
\bar{T}_1^{+2^*} = \begin{pmatrix} 
0 & \bzero^\top & \bzero^\top \cr 
\bzero & O & O \cr 
\bzero & O & \bh_4 \bbeta_4 \end{pmatrix},\quad
U_1^{0^*0} = \begin{pmatrix} 
0 & \bbeta_1 & \bzero^\top \cr 
\bzero & \bone \bbeta_1 & O \cr 
\bzero & \bone \bbeta_1 & O \end{pmatrix},\quad 
U_1^{00^*} = \begin{pmatrix} 
0　& \bzero^\top & \bbeta_4 \cr 
\bzero & O & \bone \bbeta_4 \cr 
\bzero & O & \bone \bbeta_4 \end{pmatrix},
\]
where $I$ is the identity matrix whose dimension is determined in context, $O$ a matrix of zeros whose dimension is determined in context and $\bzero$ a column vector of zeros whose dimension is determined in context; $U_1^{+^*0}$, $U_1^{+0^*}$, $U_1^{0^*+}$, $U_1^{0+^*}$, $U_1^{+^*+}$ and $U_1^{++^*}$ become identity matrices. 
Note that the matrices above have several dummy elements; for example, $\bar{T}_1^{00}$ has $\bone$'s and $-I$'s as blocks of dummy elements, which make a transition of $J_1^s(t)$ from a state of nonzero to the state of zero.
The representation of the MSP in station~2 are analogously given.

\medskip
{\it Markov chain.}\quad 
Define a vector $\bJ(t)$ as $\bJ(t)=(J^a_1(t),J^a_3(t),J^s_1(t),J^s_2(t))$, then the stochastic process $\{\bY(t)\}=\{(\bX(t),\bJ(t))\}$ representing the behavior of the two-station network becomes a continuous-time Markov chain on the state space $\calS = \mathbb{Z}_+^4 \times S_0$, where $S_0 = S^a_1 \times S^a_3 \times S^s_1 \times S^s_2$ and $\mathbb{Z}_+$ is the set of all non-negative integers. 
We denote by $Q=(Q(\bx,\bx'),\,\bx,\bx'\in\mathbb{Z}_+^4)$ the infinitesimal generator of the Markov chain $\{\bY(t)\}$, where, for each $\bx,\bx'\in\mathbb{Z}_+^4$, $Q(\bx,\bx')$ is a square block with the dimension of $s^a_1\,s^a_3\,|S^s_1|\,|S^s_2|$. Since the expression of $Q$ is very lengthy, we put it in Appendix \ref{sec:qmatrix}; in this paper, we do not directory use the expression in studying stability of the model.
We do not assume the Markov chain $\{\bY(t)\}$ is irreducible, but assume it has just one irreducible class; this point will be explained in the next subsection in detail. 
The Markov chain $\{\bY(t)\}$ is also a four-dimensional skip-free Markov modulated reflecting random walk (MMRRW in short, see Ozawa \cite{Ozawa12}, where it is called a reflecting random walk with a background process).

\subsection{Main results}

Before stating our main theorem, we make several preparations. 
In the previous subsection, we defined the Markov chain $\{\bY(t)\}=\{(\bX(t),\bJ(t))\}$ and assumed it was not necessary irreducible. This point is essential for some two-station networks; for example, in the two-station network with a non-preemptive priority service, it can be seen from some consideration that, after some finite time, the number of customers in Q$_2$, $X_2(t)$, and that of customers in Q$_4$, $X_4(t)$, do not take positive integers greater than one simultaneously. 
Hence we define a class of reducible Markov chain and call a Markov chain in the class a semi-irreducible Markov chain.

\begin{definition}[Semi-irreducible Markov chain] 
A continuous-time or discrete-time homogenous Markov chain on a countable set is said to be semi-irreducible if the chain has just one irreducible class, which is a closed communicating class, and every state in the irreducible class is accessible from any state of the chain. We consider that an irreducible Markov chain is also semi-irreducible. 
\end{definition}

\begin{remark} \label{re:semiirreducible}
If a Markov chain has a state that is accessible from any state of the Markov chain, then the Markov chain is semi-irreducible and the state is an element of the irreducible class. Therefore, it is not so difficult to check up on semi-irreducibility of a given Markov chain arising from a queueing model. 
For example, in the two-station network with a non-preemptive priority service described in the previous subsection, the state $(\bx,\bj)=((0,0,0,0),(1,1,1,1))\in\calS$ is accessible from any state in $\calS$ and hence the Markov chain $\{\bY(t)\}$ is semi-irreducible. 
\end{remark}

Let $\{Z(t)\}$ be a continuous-time homogenous Markov chain on a countable set $\tilde{\calS}$ and assume it is semi-irreducible with the irreducible class $\tilde{\calS}_0$.
With respect to {\it stability}, the Markov chain $\{Z(t)\}$ can be classified as follows.
\begin{itemize}
\item[(a)] The irreducible class is positive recurrent and the first passage time from any state in $\tilde{\calS}\setminus\tilde{\calS}_0$ to the irreducible class has finite expectation. 
\item[(b)] The irreducible class is positive recurrent and the first passage time from some state in $\tilde{\calS}\setminus\tilde{\calS_0}$ to the irreducible class does not have finite expectation. 
\item[(c)] The irreducible class is null recurrent. 
\item[(d)] The irreducible class is transient; this implies that every state of the Markov chain is transient. 
\end{itemize}
Since classes (a) and (d) are important in analyzing queueing models and it is difficult to study classes (b) and (c) by a standard method, we will focus only on classes (a) and (d) and use the following definition of positive recurrence and transience of semi-irreducible Markov chain.

\begin{definition}[Positive recurrence and transience of semi-irreducible Markov chain] \label{def:positive}
The semi-irreducible Markov chain $\{Z(t)\}$ is said to be positive recurrent if and only if it belongs to class (a), and it is said to be transient if and only if it belongs to class (d). 
\end{definition}
We also apply Definition \ref{def:positive} to discrete-time semi-irreducible Markov chains. 
If the semi-irreducible Markov chain $\{Z(t)\}$ is positive recurrent, the irreducible class $\tilde{\calS}_0$ has the stationary distribution, which we denote by $\tilde{\bpi}_0=(\tilde{\pi}_0(i),i\in\tilde{\calS}_0)$, and distribution $\tilde{\bpi}=(\tilde{\pi}(i),i\in\tilde{\calS})$ defined by
\[
\tilde{\pi}(i)= \left\{ \begin{array}{ll} 
\tilde{\pi}_0(i), & i\in\tilde{\calS}_0, \cr 
0, & \mbox{otherwise}, \end{array} \right.
\]
becomes the unique stationary distribution of the Markov chain $\{Z(t)\}$. Furthermore, the state distribution of $\{Z(t)\}$ converges to the stationary distribution $\tilde{\bpi}$ as $t$ tends to infinity. 
The positive recurrent semi-irreducible Markov chain $\{Z(t)\}$ is also positive Harris recurrent (see, for example, Bramson \cite{Bramson08}).
It can be seen from what we mentioned above that semi-irreducible Markov chains are very similar to irreducible Markov chains. 
Hereafter, we assume the Markov chain $\{\bY(t)\}$ is semi-irreducible and denote by $\calS_0$ its irreducible class. 
Furthermore, we assume there is at least one state in which the system is empty and that is accessible from any other states, i.e., for some $\bj\in S_0$, $((0,0,0,0),\bj)\in\calS_0$; this assumption is expected to hold for many cases and it implies that, for every $x_1\in\mathbb{Z}_+$ and for some $\bj\in S_0$, $((x_1,0,0,0),\bj)\in\calS_0$. 
We will use the point in the proof of Corollary \ref{co:stability} in Section \ref{sec:positive}.

Let $A$ be a non-empty subset of $N$ and denote by $\{\bY^A(t)\}=\{(\hat{\bX}^{N\setminus A}(t),\hat{\bJ}(t))\}$ the process obtained from $\{\bY(t)\}$ by assuming that, for every $i\in A$, Q$_i$ is saturated by customers, where $\hat{\bX}^{N\setminus A}(t)=(\hat{X}_i(t),i\in N\setminus A)$ and $\hat{\bJ}(t)=(\hat{J}_1^a(t),\hat{J}_3^a(t),\hat{J}_1^s(t),\hat{J}_2^s(t))$; since $\hat{X}_i(t)=\infty$ for $i\in A$, we removed them from the state vector $\hat{\bX}(t)=(\hat{X}_i(t),\,i\in N)$. 
We assume that the Markov chain $\{\bY^A(t)\}$ is semi-irreducible or that it has no irreducible class and every state is transient. If the Markov chain $\{\bY^A(t)\}$ is semi-irreducible and positive recurrent, we denote by $\bpi^A=(\bpi^A(\bx^{N\setminus A}),\,\bx^{N\setminus A}\in\mathbb{Z}_+^{4-|A|})$ its stationary distribution, where $\bx^{N\setminus A}=(x_i,\,i\in N\setminus A)$, $\bpi^A(\bx^{N\setminus A})=(\pi(\bx^{N\setminus A},\bj),\,\bj\in S_0)$ and $\pi(\bx^{N\setminus A},\bj)=P(\bY^A=(\bx^{N\setminus A},\bj))$. 
Furthermore, if the Markov chain $\{\bY^A(t)\}$ is positive recurrent, we define the mean input rate and the mean output rate of Q$_i$ in the process $\{\bY^A(t)\}$ as the mean number of customers arriving at Q$_i$ in a unit time and that of customers departing from Q$_i$ in a unit time, respectively; we denote the former by $\bar{\lambda}_i^A$ and the latter by $\bar{\mu}_i^A$.  
Since the expressions of the mean output rates are lengthy, we put them in Appendix \ref{sec:output}; in this paper, we do not directory use the expressions in studying stability of the model. The mean input rates are given by
\begin{equation}
\bar{\lambda}_1^A=\lambda_1,\quad 
\bar{\lambda}_2^A=\bar{\mu}_1^A,\quad 
\bar{\lambda}_3^A=\lambda_3+p \bar{\mu}_2^A,\quad 
\bar{\lambda}_4^A=\bar{\mu}_3^A. 
\label{eq:inputrate}
\end{equation}
We denote by $\Delta q_i^A$ the difference between the mean input rate and the mean output rate of Q$_i$ in the process $\{\bY^A(t)\}$, i.e., $\Delta q_i^A=\bar{\lambda}_i^A-\bar{\mu}_i^A$. Note that, if $i\in N\setminus A$, then $\Delta q_i^A=0$ since the input rate of Q$_i$ balances with the corresponding output rate under the assumption that the process $\{\bY^A(t)\}$ is positive recurrent. 

\medskip
For $i\in N$, denote by $h_i$ the mean service time of class-$i$ customers, then the nominal condition of the two-station network is given by 
\begin{equation}
\rho_1+\rho_4<1,\quad 
\rho_2+\rho_3<1, 
\label{eq:nominal}
\end{equation}
where $\rho_1=\lambda_1 h_1$, $\rho_2=\lambda_1 h_2$, $\rho_3=(p\lambda_1+\lambda_3) h_3$ and $\rho_4=(p\lambda_1+\lambda_3) h_4$. 
In this paper, since we are interested in the case where the nominal condition is not sufficient for the two-station network to be stable, we focus on the model satisfying the following conditions: this point will be clear later. 
\begin{align}
& \Delta q_1^N>0,\quad \Delta q_2^N<0,\quad \Delta q_3^N>0,\quad \Delta q_2^N<0, \label{eq:dqN} \\
& \Delta q_1^{\{1,2,3\}}<0,\quad \Delta q_2^{\{1,2,3\}}>0,\quad \Delta q_3^{\{1,2,3\}}>0, \label{eq:dq123} \\
& \Delta q_1^{\{1,3,4\}}>0,\quad \Delta q_3^{\{1,3,4\}}<0,\quad \Delta q_4^{\{1,3,4\}}>0, \label{eq:dq134} \\
& \Delta q_1^{\{1,4\}}>0,\quad \Delta q_4^{\{1,4\}}<0, \label{eq:dq14} \\
& \Delta q_2^{\{2,3\}}<0,\quad \Delta q_3^{\{2,3\}}>0. \label{eq:dq23}
\end{align}
Note that, since $\{\bY^N(t)\}$ is a finite semi-irreducible Markov chain, it is always positive recurrent and $\Delta q_i^N$ for $i\in N$ exists; Under condition (\ref{eq:dqN}), both $\{\bY^{\{1,2,3\}}(t)\}$ and $\{\bY^{\{1,3,4\}}(t)\}$ are positive recurrent (the reason will be explained in Section \ref{sec:positive}) and $\Delta q_i^{\{1,2,3\}},\,i=1,2,3,$ and $\Delta q_i^{\{1,3,4\}},\,i=1,3,4,$ exist; Under conditions (\ref{eq:dqN}) to (\ref{eq:dq134}), both $\{\bY^{\{1,4\}}(t)\}$ and $\{\bY^{\{23\}}(t)\}$ are positive recurrent (the reason will also be explained in Section \ref{sec:positive}) and $\Delta q_i^{\{1,4\}},\,i=1,4,$ and $\Delta q_i^{\{2,3\}},\,i=2,3,$ exist.

Sufficient conditions on which the semi-irreducible Markov chain $\{\bY(t)\}$ is positive recurrent and it is transient are given by the following theorem. 
\begin{theorem} \label{th:main0}
Let $r_1$ and $r_2$ be defined as 
\begin{align}
&r_1 = \frac{\Delta q_2^{\{1,2,3\}} \Delta q_3^{\{2,3\}} - \Delta q_3^{\{1,2,3\}} \Delta q_2^{\{2,3\}}}{\Delta q_1^{\{1,2,3\}} \Delta q_2^{\{2,3\}}},\quad 
r_2 = \frac{\Delta q_4^{\{1,3,4\}} \Delta q_1^{\{1,4\}} - \Delta q_1^{\{1,3,4\}} \Delta q_4^{\{1,4\}}}{\Delta q_3^{\{1,3,4\}} \Delta q_4^{\{1,4\}}}, 
\end{align}
and assume that 
\begin{equation}
\left| \frac{\Delta q_1^N}{\Delta q_4^N} \right| \le \left| \frac{\Delta q_1^{\{1,4\}}}{\Delta q_4^{\{1,4\}}} \right|\quad \mbox{and}\quad
\left| \frac{\Delta q_3^N}{\Delta q_2^N} \right| < \left| \frac{\Delta q_3^{\{2,3\}}}{\Delta q_2^{\{2,3\}}} \right| 
\label{eq:main0cond1}
\end{equation}
or that 
\begin{equation}
\left| \frac{\Delta q_1^N}{\Delta q_4^N} \right| < \left| \frac{\Delta q_1^{\{1,4\}}}{\Delta q_4^{\{1,4\}}} \right| \quad \mbox{and}\quad \left| \frac{\Delta q_3^N}{\Delta q_2^N} \right| \le \left|\frac{\Delta q_3^{\{2,3\}}}{\Delta q_2^{\{2,3\}}} \right|. 
\label{eq:main0cond2}
\end{equation}
Then, the semi-irreducible Markov chain $\{\bY(t)\}$ is positive recurrent if $r_1r_2<1$ and it is transient if $r_1r_2>1$.
\end{theorem}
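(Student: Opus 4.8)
The plan is to derive both halves of the theorem by feeding the drift data $\{\Delta q_i^A\}$ into the general positive-recurrence and transience criteria for four-dimensional skip-free MMRRWs that are recalled in Section~\ref{sec:preliminary} (following Ozawa~\cite{Ozawa12}, Malyshev~\cite{Malyshev93} and Fayolle et al.~\cite{Fayolle95}). First I would record the geometry fixed by the standing assumptions. By (\ref{eq:dqN}) the interior drift pushes $X_1$ and $X_3$ up while draining $X_2$ and $X_4$, so from the fully saturated face $N$ the trajectory is carried toward the faces on which $X_4=0$ or $X_2=0$, i.e.\ toward $\{\bY^{\{1,2,3\}}(t)\}$ or $\{\bY^{\{1,3,4\}}(t)\}$; conditions (\ref{eq:dq123})--(\ref{eq:dq23}) then describe the continuation, each saturated queue emptying one at a time until the two-queue faces $\{1,4\}$ and $\{2,3\}$ are reached. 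The first task is to confirm, as asserted after (\ref{eq:dq23}), that this nesting makes the induced chains $\{\bY^{\{1,2,3\}}(t)\}$, $\{\bY^{\{1,3,4\}}(t)\}$, $\{\bY^{\{1,4\}}(t)\}$ and $\{\bY^{\{2,3\}}(t)\}$ positive recurrent, so that every $\Delta q_i^A$ entering $r_1$ and $r_2$ is well defined.

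The heart of the argument is a reduction of the four-dimensional chain to a two-dimensional one. The priority structure keeps the served queues $X_2$ and $X_4$ from accumulating in the relevant regime, so the essential dynamics lives in the congestion levels of the two stations, governed by $(X_1,X_3)$. Along this reduction the station-2 block (the nested faces $\{1,2,3\}\supset\{2,3\}$) contributes an effective boundary slope $r_1$, obtained by eliminating the fast coordinate $X_2$ from the drift of $X_3$, and the station-1 block (the nested faces $\{1,3,4\}\supset\{1,4\}$) contributes the slope $r_2$ by eliminating $X_4$ from the drift of $X_1$; this is exactly the content of the defining fractions for $r_1$ and $r_2$, in which a two-by-two ``determinant'' of face drifts is divided by the drift of the surviving coordinate. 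Because $X_1$ feeds $X_2$ and $X_3$ feeds $X_4$, the two stations are coupled in a cycle, and the net gain around that cycle is precisely the product $r_1r_2$.

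With the reduction in hand, stability follows from the classical two-dimensional criterion. Conditions (\ref{eq:main0cond1}) and (\ref{eq:main0cond2}) are the consistency (ordering) requirements guaranteeing that the interior trajectory actually reaches the faces $\{1,4\}$ and $\{2,3\}$ in the order used above, so that the reduced walk is genuinely a reflecting random walk on the quadrant; the two regimes, with the weak inequality on one ratio and the strict one on the other, correspond to which of the two critical faces carries the marginal behavior. I would then construct a piecewise-linear Lyapunov (test) function adapted to the four faces, choosing face weights that make its mean drift strictly negative around the cycle when $r_1r_2<1$; the positive-recurrence criterion of Section~\ref{sec:preliminary} then yields positive recurrence in the sense of Definition~\ref{def:positive}. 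For $r_1r_2>1$ I would instead exhibit a linear function whose drift is positive on each face traversed, producing an escaping fluid path and hence transience via the companion criterion.

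The main obstacle is twofold. First, justifying the two-dimensional reduction rigorously inside the MMRRW framework: the finite background process $\bJ(t)$ and the skip-free but modulated transitions must be averaged out correctly, and one must verify that eliminating $X_2$ and $X_4$ produces \emph{exactly} the drift combinations appearing in $r_1$ and $r_2$ rather than merely an inequality of the same sign. Second, identifying the threshold as the single scalar $r_1r_2=1$ requires tracking the cyclic interaction through all four nested faces and showing that the loop gain factorizes as the product of the two independently computed station slopes; the boundary cases in (\ref{eq:main0cond1})--(\ref{eq:main0cond2}) must be handled with care, since they decide which induced chain is critical and therefore which criterion from Section~\ref{sec:preliminary} applies.
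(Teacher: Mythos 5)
Your proposal rests on a reduction that the paper itself explicitly disclaims as a proof technique. Collapsing the four-dimensional chain to a two-dimensional walk in $(X_1,X_3)$ by ``eliminating'' the fast coordinates $X_2$ and $X_4$, and then invoking a classical two-dimensional quadrant criterion, is precisely the spiral-path picture on Malyshev's second vector field that the paper isolates in Remark \ref{re:vectorfield} and flags as ``just intuitive derivation of the condition $r_1r_2<1$ and it is not a proof of Corollary \ref{co:stability}.'' None of the tools recalled in Section \ref{sec:preliminary} (nor anything in Ozawa \cite{Ozawa12}, Malyshev \cite{Malyshev93} or Fayolle et al.\ \cite{Fayolle95}) licenses such a reduction: the quantities $\ba(A)$ are stationary averages of increments on saturated faces, and there is no time-scale-separation theorem converting them into the drifts of an autonomous two-dimensional reflecting random walk whose positive recurrence or transience is equivalent to that of the four-dimensional chain. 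You name this as ``the main obstacle'' but offer no way to close it; the paper closes it by never performing the reduction: it uniformizes to a discrete-time MMRRW $\{\bY_n\}$, identifies $\Delta q_i^A=\nu a_i(A)$ and settles which induced chains are positive recurrent (Proposition \ref{pr:inducedMarkovchain}), restates the theorem as Corollary \ref{co:stability}, and then works with genuinely four-dimensional test functions.

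The test functions you propose also do not match the available criteria, and on the transience side the proposal provably fails. For positive recurrence the paper's tool is Theorem \ref{th:dpositive}, which requires a \emph{positive definite matrix} $U$ with $\langle\ba(A),\bu_j\rangle<0$ for all $A\in\calN_p$ and $j\in A$ --- a quadratic form, not a piecewise-linear function; the proof constructs $U(\varepsilon)$ explicitly (Proposition \ref{pr:pdefiniteU}), uses $r_1r_2<1$ through inequalities (\ref{eq:u11})--(\ref{eq:u22}), and uses conditions (\ref{eq:main0cond1})/(\ref{eq:main0cond2}) only to control $\langle\ba(N),\bu_k(\varepsilon)\rangle$. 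A piecewise-linear function would instead have to go through Corollary \ref{co:Foster2}, where you must control the drift at the kinks and dilate time near boundary faces so that the background process and the transient lower-dimensional faces average out; none of this is sketched. For transience, a \emph{single} linear function $f(\bx)=\langle\bw,\bx\rangle$ cannot satisfy Theorem \ref{th:Markov_transient}: its superlevel sets are either finite (useless for that theorem) or unbounded, and every sign pattern of $\bw$ then forces drift conditions on the faces met by the superlevel set that are mutually contradictory under (\ref{eq:aN})--(\ref{eq:a23}). Concretely, in the non-preemptive priority example with $p=0$, $\lambda_1=\lambda_3=1$, $\mu_1=\mu_3=10$, $\mu_2=\mu_4=9/5$ (so $r_1r_2>1$), requiring $\langle\bw,\ba(A)\rangle>0$ for all $A\in\calN_p$ gives, by summing the $\{1,2,3\}$ and $\{1,3,4\}$ conditions, $8.2(w_2+w_4)>8(w_1+w_3)$, and by summing the $\{1,4\}$ and $\{2,3\}$ conditions, $w_1+w_3>0.8(w_2+w_4)$, which together force $w_1+w_3>0$; the interior condition $w_1+w_3>1.8(w_2+w_4)$ then yields $w_1+w_3>\frac{14.4}{8.2}(w_1+w_3)$, a contradiction. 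This is exactly why the paper's transience proof uses the \emph{maximum} of two linear functions $f_{\{1,2,3\}}$ and $f_{\{1,3,4\}}$, switching on the sign of $x_2-x_4$, so that each linear piece needs positive drift only against the face vectors compatible with its own region, combined with the embedded chain $\{\tilde{\bY}_n\}$ built from the partition $\{\calV_A\}$.
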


{\it An example: the two-station network with a non-preemptive priority service.}\quad Here we demonstrate how the theorem works in the case of the two-station network with a non-preemptive priority service explained in the previous subsection. 
For $i\in N$, the service times of class-$i$ customers are subject to a PH-distribution with mean service time $1/\mu_i$ and class-4 customers (class-2 customers) have non-preemptive priority over class-1 customers (resp.\ class-3 customers). 
Assuming the nominal condition, we consider the case where $\mu_1>\mu_2$ and $\mu_3>\mu_4$; in this case, the nominal condition is not sufficient for the model to be stable. 
Let a set $\calN_p$ be defined as $\calN_p=\{N,\{1,2,3\},\{1,3,4\},\{1,4\},\{2,3\}\}$. For $A\in\calN_p$ and for $i\in A$, $\Delta q_i^A$ is given as follows.
\begin{itemize}
\item $\Delta q_i^N,\,i\in N$. 
Consider the case where all the queues are saturated by customers. Since class-4 customers have non-preemptive priority over class-1 customers, the server in station 1 is engaged in service only for class-4 customers after some finite time. Hence, the mean output rates of Q$_1$ and Q$_4$ are given as $\bar{\mu}_1^N=0$ and $\bar{\mu}_4^N=\mu_4$, respectively. Analogously, we obtain $\bar{\mu}_2^N=\mu_2$ and $\bar{\mu}_3^N=0$. 
The mean input rates are automatically obtained by formulae (\ref{eq:inputrate}) and we have 
\[
\Delta q_1^N=\lambda_1>0,\quad 
\Delta q_2^N=-\mu_2<0,\quad 
\Delta q_3^N=\lambda_3+p \mu_2>0,\quad 
\Delta q_4^N=-\mu_4<0.
\]
\item $\Delta q_i^{\{1,2,3\}},\,i\in {\{1,2,3\}}$.  
Consider the case where queues Q$_1$, Q$_2$ and Q$_3$ are saturated by customers. In this case, the server in station 2 is engaged in service only for class-2 customers after some finite time, and we have $\bar{\mu}_2^{\{1,2,3\}}=\mu_2$ and $\bar{\mu}_4^{\{1,2,3\}}=0$. This implies that any customers do not arrive at Q$_4$ after some finite time and Q$_4$ will eventually become empty since customers in Q$_4$ have priority in service over customers in Q$_1$. Hence we obtain $\bar{\mu}_4^{\{1,2,3\}}=0$. 
After Q$_4$ becomes empty, the server in station 1 is engaged in service for customers in Q$_1$, which is saturated by customers. Hence, we have $\bar{\mu}_1^{\{1,2,3\}}=\mu_1$. As a result, we obtain 
\[
\Delta q_1^{\{1,2,3\}}=\lambda_1-\mu_1<0,\quad 
\Delta q_2^{\{1,2,3\}}=\mu_1-\mu_2>0,\quad 
\Delta q_3^{\{1,2,3\}}=\lambda_3+p \mu_2>0, 
\]
where we use the assumption of  $\mu_1>\mu_2$.
\item $\Delta q_i^{\{1,3,4\}},\,i\in {\{1,3,4\}}$.  
Consider the case where queues Q$_1$, Q$_3$ and Q$_4$ are saturated by customers. This case is symmetric to that for $\Delta q_i^{\{1,2,3\}}$. Hence, we obtain 
\[
\Delta q_1^{\{1,3,4\}}=\lambda_1>0,\quad 
\Delta q_3^{\{1,3,4\}}=\lambda_3-\mu_3<0,\quad 
\Delta q_4^{\{1,3,4\}}=\mu_3-\mu_4>0, 
\]
where we use the assumption of $\mu_3>\mu_4$.
\item $\Delta q_i^{\{1,4\}},\,i\in {\{1,4\}}$.  
Consider the case where queues Q$_1$ and Q$_4$ are saturated by customers. In this case, the server in station 1 is engaged in service only for class-4 customers after some finite time, and we have $\bar{\mu}_1^{\{1,4\}}=0$ and $\bar{\mu}_4^{\{1,4\}}=\mu_4$. This implies that any customers do not arrive at Q$_2$ after some finite time and Q$_2$ will eventually become empty. Hence, we have $\bar{\mu}_2^{\{1,4\}}=0$. 
After Q$_2$ becomes empty, the server in station 2 is engaged in service for customers in Q$_3$. From the nominal condition (\ref{eq:nominal}), we have $\lambda_3 < \mu_3$, and hence we see that Q$_3$ is stable and the output rate of Q$_3$ is identical to its input rate, i.e., $\bar{\mu}_3^{\{1,4\}}=\lambda_3$. As a result, we have 
\[
\Delta q_1^{\{1,4\}}=\lambda_1>0,\quad 
\Delta q_4^{\{1,4\}}=\lambda_3-\mu_4<0, 
\]
where we use the nominal condition to derive the negativity of $\Delta q_4^{\{1,4\}}$. 
\item $\Delta q_i^{\{2,3\}},\,i\in {\{2,3\}}$.  
Consider the case where queues Q$_2$ and Q$_3$ are saturated by customers. This case is symmetric to that for $\Delta q_i^{\{1,4\}}$. Hence, we have 
\[
\Delta q_2^{\{2,3\}}=\lambda_1-\mu_2<0,\quad 
\Delta q_3^{\{2,3\}}=\lambda_3+p \mu_2>0, 
\]
where we use the nominal condition to derive the negativity of $\Delta q_2^{\{2,3\}}$. 
\end{itemize}

We note that all $\Delta q_i^A,\,A\in\calN_p,\,i\in A,$ satisfies conditions (\ref{eq:dqN}) to (\ref{eq:dq23}). 
Furthermore, the following conditions of Theorem \ref{th:main0} are also satisfied: 
\[
\left|\frac{\Delta q_1^N}{\Delta q_4^N}\right|=\frac{\lambda_1}{\mu_4} \le \frac{\lambda_1}{\mu_4-\lambda_3} = \left|\frac{\Delta q_1^{\{1,4\}}}{\Delta q_4^{\{1,4\}}}\right|, \quad 
\left|\frac{\Delta q_3^N}{\Delta q_2^N}\right|=\frac{\lambda_3+p \mu_2}{\mu_2} < \frac{\lambda_3+p \mu_2}{\mu_2-\lambda_1}=\left|\frac{\Delta q_3^{\{2,3\}}}{\Delta q_2^{\{2,3\}}}\right|, 
\]
where we consider the case where $\lambda_1\ne 0$ and $\lambda_3+p \mu_2\ne 0$. 
$r_1$ and $r_2$ in Theorem \ref{th:main0} are given as 
\[
r_1 = \frac{\lambda_3+p \mu_2}{\mu_2-\lambda_1},\quad 
r_2 = \frac{\lambda_1}{\mu_4-\lambda_3}, 
\]
and inequality $r_1 r_2<1$ is equivalent to $\rho_2+\rho_3<1$. Hence, we see that the two-station network with a non-preemptive priority service is positive recurrent if $\rho_2+\rho_3<1$ and it is transient if $\rho_2+\rho_3>1$; This result is coincident with the existing result for two-station networks with a preemptive-resume priority service. 

\begin{remark}
It can be seen from the above results for the two-station network with a non-preemptive priority service that the condition on which the model is stable and it is unstable are given only in terms of the mean arrival rates and the mean service rates; hence, the stability region of the model does not depend on other features of the arrival processes and service time distributions. 
On the other hand, Dai et al.~\cite{Dai04} demonstrated that the stability region of a push-started Lu-Kumar network depended on the inter-arrival time distribution and service time distributions. Our two-station network is a simplified version of the push-started Lu-Kumar network, and it can be seen from the results in Dai et al.~\cite{Dai04} that, in the case of constant inter-arrival times and constant service times and in the case of uniformly distributed inter-arrival times and uniformly distributed service times, the condition $\rho_2+\rho_4>1$ does not always imply instability of the two-station network with a non-preemptive priority service. 
Any arrival process with i.i.d.\ inter-arrival times can be approximated with any accuracy by a MAP and any service time distribution can also be approximated with any accuracy by a PH-distribution. However, any arrival process whose inter-arrival time distribution has bounded support cannot be represented as a MAP and any service time distribution with bounded support cannot also be represented as a PH-distribution. 
Therefore, as mentioned in Dai et al.~\cite{Dai04}, the boundedness of the support of the distributions seems to affect the stability of the network. 
\end{remark}

%
%
%
\section{Preliminaries} \label{sec:preliminary}

In this section, we deal with a discrete-time semi-irreducible Markov chain and Markov modulated reflecting random walk (MMRRW) and present several concepts and theorems that will be used for proving the main theorem. 
The model we considered in the previous section was a continuous-time semi-irreducible Markov chain; in the next section, applying the uniformization technique to the continuous-time semi-irreducible Markov chain, we will obtain a discrete-time semi-irreducible Markov chain that has the same stability region as that of the original continuous-time semi-irreducible Markov chain. Applying the theorems in this section to the discrete-time semi-irreducible Markov chain, we will prove the main theorem. 
Note that, in this section, we use notations independently of other sections.

\subsection{Semi-irreducible Markov chain}

Let $\{Z_n\}$ be a discrete-time homogenous Markov chain on a countable set $\calS$ and assume it to be semi-irreducible with irreducible class $\calS_0$. As mentioned in Section \ref{sec:modelandresults}, semi-irreducible Markov chains are very similar to irreducible Markov chains. 
If the semi-irreducible Markov chain $\{Z_n\}$ is positive recurrent in our sense and it has the stationary distribution denoted by $\bpi=(\pi(i),i\in\calS)$, then the ergodic theorem of Markov chains also holds, i.e., for a real function $f$ on $\calS$ satisfying 
\[
\sum_{i\in\calS} |f(i)| \pi(i) < \infty, 
\]
we have, for any initial distribution, 
\[
\lim_{n\to\infty} \frac{1}{n}\sum_{k=1}^n f(Z_n) = \sum_{i\in\calS} f(i) \pi(i),\ \mbox{w.p. 1}.
\]
In order to derive conditions on which a semi-irreducible Markov chain is positive recurrent and it is transient in our sense, we can use the following statements. 

\begin{theorem}[Foster's theorem] \label{th:Foster1}
The semi-irreducible Markov chain $\{Z_n\}$ is positive recurrent if there exist a positive number $\varepsilon$, a finite subset $\calV\subset\calS$ and a lower bounded real function $f(i),\,i\in\calS,$ such that 
\begin{align}
&E(f(Z_{n+1})-f(Z_n)\,|\,Z_n=i) \le -\varepsilon,\ i\in\calS\setminus\calV, \label{eq:Foster1a} \\
&E(f(Z_{n+1})\,|\,Z_n=i) < \infty,\ i\in\calV. \label{eq:Foster1b}
\end{align}
\end{theorem}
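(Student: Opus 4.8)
The plan is to extract the two defining properties of class~(a) from the drift condition: that the irreducible class $\calS_0$ is positive recurrent, and that the expected first passage time to $\calS_0$ from every state is finite. The engine throughout is the supermartingale generated by $f$. Write $m=\inf_{i\in\calS}f(i)>-\infty$, let $E_i$ denote expectation for the chain started at $i$, and set $\tau_\calV=\inf\{n\ge0:Z_n\in\calV\}$. First I would check that $f(Z_{n\wedge\tau_\calV})+\varepsilon\,(n\wedge\tau_\calV)$ is a supermartingale, which is immediate from (\ref{eq:Foster1a}) on $\{n<\tau_\calV\}$ and from constancy on $\{n\ge\tau_\calV\}$. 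Optional stopping together with $f\ge m$ then gives the comparison bound $E_i[\tau_\calV]\le (f(i)-m)/\varepsilon<\infty$ for every $i\in\calS$. Feeding this into the one-step finiteness (\ref{eq:Foster1b}) shows that the return time $\tau_\calV^+=\inf\{n\ge1:Z_n\in\calV\}$ has $E_j[\tau_\calV^+]<\infty$ for each $j\in\calV$, so that $b:=\max_{j\in\calV}E_j[\tau_\calV^+]<\infty$ since $\calV$ is finite.

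Next I would record the structural consequence of semi-irreducibility. Because $\calS_0$ is closed while $\calS_0$ is accessible from every state, from any state outside $\calS_0$ the chain reaches $\calS_0$ (and then never returns) with positive probability; hence every state of $\calS\setminus\calS_0$ is transient. Consequently the finite set $\calV_1:=\calV\setminus\calS_0$ consists of transient states, and the expected total number of visits to $\calV_1$, call it $E_i[W]$, is finite for every $i$, being a finite sum of expected visit counts to individual transient states. Positive recurrence of $\calS_0$ itself I would obtain by restricting the chain to the closed irreducible set $\calS_0$: there $f$ still has drift $\le-\varepsilon$ off the finite set $\calV\cap\calS_0$ and finite one-step means on it, so the classical Foster criterion for irreducible chains applies. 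That $\calV\cap\calS_0\neq\emptyset$ is automatic, since otherwise the drift would be $\le-\varepsilon$ throughout the closed set $\calS_0$, driving $E[f(Z_n)]$ to $-\infty$ and contradicting $f\ge m$.

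The decisive step is to upgrade ``$\calS_0$ is reached almost surely'' to ``$\calS_0$ is reached in finite expected time,'' and here both ingredients combine. Before $\tau_{\calS_0}$ every visit to $\calV$ lands in $\calV_1$, because a visit to $\calV\cap\calS_0$ would already lie in $\calS_0$. Writing $\tau^{(0)}=\tau_\calV$ and $\tau^{(k+1)}$ for the successive return times to $\calV$, I would bound $\tau_{\calS_0}$ by $\tau_\calV$ plus the sum, over those visits landing in $\calV_1$, of the time until the next visit to $\calV$. Taking expectations and applying the strong Markov property at each $\tau^{(k)}$, each inter-visit gap contributes at most $b$, while the number of $\calV_1$-landing visits is exactly the total number of visits to $\calV_1$; this yields $E_i[\tau_{\calS_0}]\le E_i[\tau_\calV]+b\,E_i[W]<\infty$ for every $i$. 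Together with the positive recurrence of $\calS_0$, this places $\{Z_n\}$ in class~(a), i.e.\ positive recurrent in the sense of Definition~\ref{def:positive}.

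I expect the main obstacle to be precisely this last renewal-type estimate. The drift condition controls passage only to $\calV$, not to $\calS_0$, and $\calV$ may contain states outside $\calS_0$, so almost sure finiteness of $\tau_{\calS_0}$ is immediate but finiteness of its expectation is not. The care needed is to observe that the transient states of $\calV\setminus\calS_0$ are visited only finitely often in expectation, and then to glue the finitely many short excursions together through the strong Markov property, rather than trying to manufacture a single Lyapunov function with negative drift off $\calS_0$.
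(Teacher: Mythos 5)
Your proof is correct. There is nothing in the paper to compare it against line by line: the paper explicitly omits the proof of Theorem \ref{th:Foster1}, saying only that it can be proved ``in a manner similar to that for irreducible Markov chains'' (citing Br\'emaud), so your write-up is best read as supplying the details that omission hides. The classical ingredients you use --- the stopped supermartingale $f(Z_{n\wedge\tau_\calV})+\varepsilon(n\wedge\tau_\calV)$ giving $E_i[\tau_\calV]\le (f(i)-m)/\varepsilon$, the one-step use of (\ref{eq:Foster1b}) giving $b=\max_{j\in\calV}E_j[\tau_\calV^+]<\infty$, and the classical Foster criterion applied to the chain restricted to the closed irreducible class $\calS_0$ --- are exactly the Br\'emaud argument the paper gestures at. What goes beyond it, and what the semi-irreducible setting genuinely requires, are your steps (i) every state outside $\calS_0$ is transient (closedness of $\calS_0$ plus its accessibility from everywhere), so the finite set $\calV_1=\calV\setminus\calS_0$ has finite expected total occupation $E_i[W]$, and (ii) the excursion-gluing bound $E_i[\tau_{\calS_0}]\le E_i[\tau_\calV]+b\,E_i[W]$, obtained from the strong Markov property at the successive visits to $\calV$, all of which land in $\calV_1$ as long as $\calS_0$ has not yet been entered. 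This is precisely what upgrades ``$\calS_0$ is positive recurrent'' to membership in class (a) of Definition \ref{def:positive}, i.e.\ finite expected first passage to $\calS_0$ from every state, which is the point the paper's remark after the theorem emphasizes; your side observation that $\calV\cap\calS_0\ne\emptyset$ is forced also matches that remark. The only place I would add a line is the pathwise inequality behind (ii): letting $k^*$ denote the index of the last visit to $\calV$ occurring before $\tau_{\calS_0}$ (finite a.s.\ by (i), and the visits $\tau^{(0)},\dots,\tau^{(k^*)}$ all land in $\calV_1$), one has $\tau_{\calS_0}\le\tau^{(k^*+1)}=\tau_\calV+\sum_{k\le k^*}\bigl(\tau^{(k+1)}-\tau^{(k)}\bigr)$, after which taking expectations term by term and bounding each gap by $b$ yields your estimate; as stated, the bound is asserted rather than derived, but the derivation is exactly this and introduces no gap.
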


\begin{remark}
In applying Theorem \ref{th:Foster1} to a semi-irreducible Markov chain, it is not necessary to specify the irreducible class of the Markov chain. 
\end{remark}

Make sure that Theorem \ref{th:Foster1} asserts that, under conditions (\ref{eq:Foster1a}) and (\ref{eq:Foster1b}), not only the irreducible class $\calS_0$ is positive recurrent in a usual sense but also the first passage time from any state of the Markov chain to the irreducible class has finite expectation. Furthermore, under the conditions, we must have $\calV\cap\calS_0\ne\emptyset$. 
Theorem \ref{th:Foster1} can be proved in a manner similar to that for irreducible Markov chains (see, for example, Br\'emaud \cite{Bremaud99}), and we here omit the proof.

Exclusively divide the state space $\calS$ into a finite number of nonempty subsets, say $\calV_k,\,k=1,2,...,m_S$, and define a function $\tau(i)$ as 
\[
\mbox{$\tau(i) = T_k$ if $i\in\calV_k$}, 
\]
where, for $k\in\{1,2,...,m_S\}$, $T_k$ is a positive integer. 
Furthermore, define a strictly increasing time sequence $\{t_n\}$ as 
\[
t_0=0,\quad t_{n+1} = t_n+\tau(Z_{t_n}),\,n\ge 0,
\]
and consider a stochastic process $\{\tilde{Z}_n\}$ defined by $\tilde{Z}_n=Z_{t_n},\,n\ge 0$. This process $\{\tilde{Z}_n\}$ is a kind of embedded Markov chain of $\{Z_n\}$, and we immediately obtain the following corollary, which is a modification of Theorem 2.2.4 of Fayolle et al.\ \cite{Fayolle95}; also see Proposition 4.5 of Bramson \cite{Bramson08}.

\begin{corollary} \label{co:Foster2}
Assume that the embedded Markov chain $\{\tilde{Z}_n\}$ is semi-irreducible with the same irreducible class $\calS_0$ as the original chain $\{Z_n\}$. If there exist a positive number $\varepsilon$, a finite subset $\calV\subset\calS$ and a lower bounded real function $f(i),\,i\in\calS,$ such that  
\begin{align}
&E(f(\tilde{Z}_{n+1})-f(\tilde{Z}_n)\,|\,\tilde{Z}_n=i) \le -\varepsilon,\ i\in\calS\setminus\calV, \label{eq:Foster2a} \\
&E(f(\tilde{Z}_{n+1})\,|\,\tilde{Z}_n=i) < \infty,\ i\in\calV, \label{eq:Foster2b}
\end{align}
then the semi-irreducible Markov chain $\{Z_n\}$ is positive recurrent.
\end{corollary}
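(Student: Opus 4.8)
The plan is to establish positive recurrence of the embedded chain $\{\tilde{Z}_n\}$ first and then transfer it to the original chain $\{Z_n\}$ by exploiting the boundedness of the time-change $\tau$. Since $\{\tilde{Z}_n\}$ is assumed to be semi-irreducible with irreducible class $\calS_0$, and conditions (\ref{eq:Foster2a}) and (\ref{eq:Foster2b}) are exactly Foster's drift conditions for $\{\tilde{Z}_n\}$, I would apply Theorem \ref{th:Foster1} directly to $\{\tilde{Z}_n\}$. This yields that $\{\tilde{Z}_n\}$ is positive recurrent in the sense of Definition \ref{def:positive}; that is, its irreducible class $\calS_0$ is positive recurrent and the first passage time of $\{\tilde{Z}_n\}$ from any state in $\calS\setminus\calS_0$ to $\calS_0$ has finite expectation.

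The key structural observation is that $\tau$ takes only the finitely many positive integer values $T_1,\dots,T_{m_S}$, so $\tau$ is bounded: setting $T_{\max}=\max_k T_k$, we have $1\le \tau(i)\le T_{\max}$ for every $i\in\calS$. Consequently $t_m=\sum_{k=0}^{m-1}\tau(\tilde{Z}_k)\le T_{\max}\,m$ for every $m$. Moreover, since $\tilde{Z}_m=Z_{t_m}$, whenever the embedded chain occupies a given state or set at embedded step $m$, the original chain occupies that same state or set at real time $t_m$; hence every first passage time of $\{Z_n\}$ is dominated by the real time $t_{(\cdot)}$ at which the embedded chain first achieves the corresponding event.

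With these two facts I would verify the two requirements of class (a) for $\{Z_n\}$. First, fix a reference state $z_0\in\calS_0$ and start from $Z_0=z_0$, so that $\tilde{Z}_0=z_0$ as well. Let $\sigma$ and $\tilde\sigma$ be the first return times to $z_0$ of $\{Z_n\}$ and $\{\tilde{Z}_n\}$, respectively. Because $Z_{t_{\tilde\sigma}}=\tilde{Z}_{\tilde\sigma}=z_0$, we have the genuine return bound $\sigma\le t_{\tilde\sigma}\le T_{\max}\,\tilde\sigma$, and since positive recurrence of $\{\tilde{Z}_n\}$ gives $E_{z_0}[\tilde\sigma]<\infty$, it follows that $E_{z_0}[\sigma]\le T_{\max}E_{z_0}[\tilde\sigma]<\infty$; thus $z_0$, and hence the whole irreducible class $\calS_0$, is positive recurrent for $\{Z_n\}$. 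Second, for arbitrary $i\in\calS\setminus\calS_0$, let $\rho$ and $\tilde\rho$ be the first passage times into $\calS_0$ of $\{Z_n\}$ and $\{\tilde{Z}_n\}$ starting from $i$. The same domination gives $\rho\le t_{\tilde\rho}\le T_{\max}\,\tilde\rho$, and finiteness of $E_i[\tilde\rho]$ (from class (a) for $\{\tilde{Z}_n\}$) yields $E_i[\rho]<\infty$. Together these two properties place $\{Z_n\}$ in class (a), i.e., $\{Z_n\}$ is positive recurrent.

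I expect the delicate points to be bookkeeping rather than depth. One must check \emph{both} defining properties of class (a) — finite mean return time to $\calS_0$ and finite mean first passage from $\calS\setminus\calS_0$ into $\calS_0$ — and must justify the inequalities $\sigma\le t_{\tilde\sigma}$ and $\rho\le t_{\tilde\rho}$ from the identity $Z_{t_m}=\tilde{Z}_m$ rather than assume them. The essential ingredient that makes the transfer work is the boundedness of $\tau$: it is the finiteness of the value set $\{T_k\}$, and not merely $\tau\ge 1$, that converts the finite embedded-step expectations $E[\tilde\sigma]$ and $E[\tilde\rho]$ into finite real-time expectations $E[\sigma]$ and $E[\rho]$.
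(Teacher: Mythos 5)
Your proposal is correct and takes exactly the route the paper intends: the paper gives no written proof of Corollary~\ref{co:Foster2}, presenting it as immediate from Foster's theorem (Theorem~\ref{th:Foster1}) applied to the embedded chain, with citations to Fayolle et al.\ (Theorem 2.2.4) and Bramson (Proposition 4.5). Your argument—Foster for $\{\tilde{Z}_n\}$, then the transfer to $\{Z_n\}$ via the pointwise dominations $\sigma\le t_{\tilde\sigma}\le T_{\max}\,\tilde\sigma$ and $\rho\le t_{\tilde\rho}\le T_{\max}\,\tilde\rho$, which rest on the finiteness of the value set $\{T_k\}$—is precisely the detail that makes the paper's ``immediately obtain'' valid, including the check of both defining properties of class (a).
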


In order to prove that a semi-irreducible Markov chain is transient, we will use the following proposition, which is obtained by applying Theorem 2.2.7 of Fayolle et al.\ \cite{Fayolle95} to the irreducible class $\calS_0$. 
\begin{theorem} \label{th:Markov_transient} 
Assume that the embedded Markov chain $\{\tilde{Z}_n\}$ is semi-irreducible with the same irreducible class $\calS_0$ as the original chain $\{Z_n\}$. If there exist a real function $f(i),\,i\in\calS,$ and positive numbers $\varepsilon$, $c$ and $b$ such that, for $\calA=\{i\in\calS : f(i)>c \}$,   
\begin{itemize}
\item[(i)] $\calA\cap\calS_0\ne\emptyset,\ \calA^C\cap\calS_0\ne\emptyset$, 
\item[(ii)] $E(f(\tilde{Z}_{n+1})-f(\tilde{Z}_n)\,|\,\tilde{Z}_n=i) \ge \varepsilon,\ i\in\calA$, and
\item[(iii)] the inequality $|f(j)-f(i)|>b$ implies $P(Z_1=j\,|\,Z_0=i)=0$, 
\end{itemize}
then the semi-irreducible Markov chain $\{Z_n\}$ is transient.
\end{theorem}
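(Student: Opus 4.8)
The plan is to reduce the assertion to a transience statement about a single irreducible class and then to run the exponential--supermartingale argument that underlies Theorem~2.2.7 of Fayolle et al.\ \cite{Fayolle95}. Since $\{Z_n\}$ is semi-irreducible with closed irreducible class $\calS_0$ and transience is a solidarity property inside $\calS_0$, it suffices to exhibit one state of $\calS_0$ that is visited only finitely often on a positive-probability event; by the classification into classes (a)--(d) this places the chain in class~(d) and hence makes it transient in the sense of Definition~\ref{def:positive}. Using hypothesis (i) I may fix $\alpha^\ast\in\calA\cap\calS_0$ and $\beta\in\calA^C\cap\calS_0$, so the goal becomes: started from $\alpha^\ast$, with positive probability $\{Z_n\}$ eventually keeps $f$ above $c$ forever and thus visits $\beta$ and $\alpha^\ast$ only finitely often.

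The first technical step is to convert the jump bound (iii), which is a one-step property of the original chain $\{Z_n\}$, into a jump bound for the sampled chain $\{\tilde{Z}_n\}$ on which the drift hypothesis (ii) is stated. Because the sampling gaps $t_{n+1}-t_n=\tau(Z_{t_n})$ take values in the finite set $\{T_1,\dots,T_{m_S}\}$, they are bounded by $T_{\max}=\max_k T_k$; composing at most $T_{\max}$ one-step transitions, each changing $f$ by at most $b$, gives $|f(\tilde{Z}_{n+1})-f(\tilde{Z}_n)|\le b'$ with $b'=T_{\max}\,b$. Thus on $\calA$ the increments of $f(\tilde{Z}_n)$ are bounded in absolute value by $b'$ while having conditional mean at least $\varepsilon$, which is exactly the configuration to which the transience criterion for the irreducible class $\calS_0$ applies.

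With these two facts I would build the Lyapunov supermartingale. Bounded increments together with the positive drift on $\calA$ guarantee, via the elementary estimate $e^{-\theta x}\le 1-\theta x+\tfrac12\theta^2 (b')^2 e^{\theta b'}$ for $|x|\le b'$, that a small $\theta>0$ can be chosen, uniformly over $i\in\calA$, with $E\bigl(e^{-\theta(f(\tilde{Z}_{n+1})-f(\tilde{Z}_n))}\mid \tilde{Z}_n=i\bigr)\le 1$. Writing $\sigma=\inf\{n\ge 0:\tilde{Z}_n\in\calA^C\}$, the process $M_n=\exp(-\theta f(\tilde{Z}_{n\wedge\sigma}))$ is then a nonnegative bounded supermartingale, so it converges and $E_{\alpha^\ast}(M_\infty)\le M_0=e^{-\theta f(\alpha^\ast)}$. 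On $\{\sigma<\infty\}$ one has $f(\tilde{Z}_\sigma)\le c$, hence $M_\infty\ge e^{-\theta c}$; comparing the two bounds yields $P_{\alpha^\ast}(\sigma<\infty)\le e^{-\theta(f(\alpha^\ast)-c)}<1$, because $f(\alpha^\ast)>c$. Therefore the escape event $\{\sigma=\infty\}$ has positive probability, and on it $f(\tilde{Z}_n)$ is a submartingale with drift at least $\varepsilon$ and bounded increments, so a standard martingale strong law gives $f(\tilde{Z}_n)\to\infty$.

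The last step transfers this back to $\{Z_n\}$, which I expect to be the main obstacle, since the drift is controlled only along the sampling times and the jumps only one step at a time. On $\{\sigma=\infty\}$, any $t$ lies between consecutive sampling times $t_n\le t<t_{n+1}$, and $Z_t$ is reached from $\tilde{Z}_n$ in at most $T_{\max}$ one-step transitions, so $f(Z_t)\ge f(\tilde{Z}_n)-b'\to\infty$. Consequently $f(Z_t)\to\infty$ along the whole trajectory, and since $f(\beta)$ and $f(\alpha^\ast)$ are finite, $\{Z_n\}$ visits $\beta$ and $\alpha^\ast$ only finitely often on this positive-probability event. Hence $\alpha^\ast$ is a transient state of $\{Z_n\}$; by irreducibility of $\calS_0$ the whole class $\calS_0$ is transient, which by the classification places $\{Z_n\}$ in class~(d) and proves it transient. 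The delicate points to watch are that $f$ need not be proper, so I must deduce finitely-many visits of the \emph{fixed} states $\beta,\alpha^\ast$ directly from $f\to\infty$ rather than from the chain leaving finite sets, and that the uniformity of $\theta$ over $i\in\calA$ is precisely what makes the supermartingale estimate hold regardless of where in $\calA$ the sampled chain currently sits.
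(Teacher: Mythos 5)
Your proof is correct, and it is worth noting that the paper does not actually prove Theorem \ref{th:Markov_transient} at all: it is stated as "obtained by applying Theorem 2.2.7 of Fayolle et al.\ \cite{Fayolle95} to the irreducible class $\calS_0$", so your argument supplies what that citation leaves implicit. In substance you have reconstructed the standard proof of the Fayolle--Malyshev--Menshikov criterion --- the uniform exponential supermartingale $M_n=\exp(-\theta f(\tilde{Z}_{n\wedge\sigma}))$ on $\calA$, yielding $P_{\alpha^\ast}(\sigma<\infty)\le e^{-\theta(f(\alpha^\ast)-c)}<1$, and the martingale strong law forcing $f(\tilde{Z}_n)\to\infty$ on $\{\sigma=\infty\}$ --- and then added the two bridging steps the citation glosses over: (1) hypothesis (iii) constrains the one-step chain $\{Z_n\}$ while the drift (ii) lives on the sampled chain, and since the sampling gaps are bounded by $T_{\max}=\max_k T_k$ you correctly obtain the jump bound $b'=T_{\max}\,b$ for $\{\tilde{Z}_n\}$ and, at intermediate times $t_n\le t<t_{n+1}$, the estimate $f(Z_t)\ge f(\tilde{Z}_n)-b'$; (2) the semi-irreducible extension, which you reduce via solidarity of transience within $\calS_0$ to the paper's classification (d), under which transience of the unique closed class makes every state transient. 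Your identification of the genuinely delicate point is exactly right: because $\alpha^\ast\in\calA$ and because the original chain can dip below level $c$ between sampling epochs even when the embedded chain never enters $\calA^C$, mere non-hitting of $\calA^C$ by $\{\tilde{Z}_n\}$ would not suffice, and one must transfer the full divergence $f(\tilde{Z}_n)\to\infty$ back to $\{Z_n\}$ and then count visits to the \emph{fixed} states of finite $f$-value directly.

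Two minor polish points, neither a gap. First, the "standard martingale strong law" step deserves the explicit decomposition $f(\tilde{Z}_{n\wedge\sigma})=f(\tilde{Z}_0)+\sum_{k=1}^{n}\mathds{1}_{\{k\le\sigma\}}\Delta_k$, where the centered increments $\Delta_k-E(\Delta_k\,|\,\mathcal{F}_{k-1})$ are bounded by $2b'$ and form a martingale whose Ces\`aro averages vanish almost surely, so that on $\{\sigma=\infty\}$ one gets $f(\tilde{Z}_n)\ge f(\tilde{Z}_0)+\varepsilon n+o(n)$ pathwise; your one-line invocation is standard but this is where the argument actually lives. Second, observe that your proof never uses the hypothesis $\calA^C\cap\calS_0\ne\emptyset$ from condition (i): only $\calA\cap\calS_0\ne\emptyset$ is needed, since transience of the fixed state $\alpha^\ast$ is deduced from $f\to\infty$ rather than from avoidance of a designated state $\beta$. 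This is harmless --- the extra hypothesis is inherited from the formulation in Fayolle et al.\ --- but it confirms that your route through $f\to\infty$ is in fact slightly stronger than the hitting-probability argument it replaces.
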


\begin{remark}
In Theorem \ref{th:Markov_transient}, condition (ii) can be replaced with
\begin{itemize}
\item[(ii')] $E(f(\tilde{Z}_{n+1})-f(\tilde{Z}_n)\,|\,\tilde{Z}_n=i) \ge \varepsilon,\ i\in\calA\cap\calS_0$, 
\end{itemize}
but, in order to use this condition, we have to exactly specify the irreducible class $\calS_0$. Hence we will use condition (ii) instead of (ii') in this paper. 
\end{remark}

\subsection{Markov modulated reflecting random walk}

According to Ozawa \cite{Ozawa12}, we introduce a Markov modulated reflecting random walk and give a condition on which it is positive recurrent. 

Let $d$ be a positive integer and define a set $N$ as $N=\{1,2,...,d\}$. 
Hereafter, we use the following notations. For $A\subset N$, we denote by $\bx^A$ a vector in $\mathbb{Z}_+^{|A|}$ whose elements are indexed in $A$, i.e., $\bx^A=(x_l,\,l\in A)$, and by $\btwo^A$ the $|A|$-dimensional vector of $2$'s whose elements are indexed in $A$. We also denote by $(\bx^A,\by^{N\setminus A})$ the combination of $\bx^A$ and $\by^{N\setminus A}$; the $l$-th element of $(\bx^A,\by^{N\setminus A})$ is $x_l$ if $l\in A$ and $y_l$ if $l\in N\setminus A$. 
For example, when $d=5$ and $A=\{1,3\}$, we have $\bx^A=(x_1,x_3)$, $\by^{N\setminus A}=(y_2,y_4,y_5)$ and $(\bx^A,\by^{N\setminus A})=(x_1,y_2,x_3,y_4,y_5)$.
We also use this notation for expressing a part of a given vector; for $\bx=(x_l,l\in N)$ and $A\subset N$, we denote vector $(x_l,l\in A)$ by $\bx^A$ and represent $\bx$ as $\bx=(\bx^A,\bx^{N\setminus A})=(\bx^{N\setminus A},\bx^A)$. 

Define a function $\varphi:\,\mathbb{Z}_+^d\to\calN$ as
\begin{equation}
\varphi(\bx) = N\setminus\{l\in N : x_l=0 \}\ \mbox{for}\ \bx=(x_1,x_2,...,x_d)\in\mathbb{Z}_+^d, 
\label{eq:varphi}
\end{equation}
where $\calN$ is the set of all subsets of $N$; $\varphi(\bx)$ is the index set of the nonzero elements of $\bx$ and indicates the boundary face of $\mathbb{Z}_+^d$ that $\bx$ is belonging to.
For $A\in\calN$, let $S^A$ be a finite set, say $S^A=\{1,2,...,s^A\}$ for some positive integer $s^A$. 
Let a state space $\calS$ be defined as 
\[
\calS = \bigcup_{\bx\in\mathbb{Z}_+^d} \left( \{\bx\}\times S^{\varphi(\bx)} \right), 
\]
and consider a discrete-time homogeneous Markov chain $\{\bY_n\}=\{(\bX_n,J_n)\}$ on the state space $\calS$; we call the process $\{\bX_n\}=\{(X_{1,n},X_{2,n},\cdots,X_{d,n})\}$ the front process of $\{\bY_n\}$ and $\{J_n\}$ the background process of it. 
Furthermore, we assume that individual processes $\{X_{l,n}\},\,l\in N,$ are skip free and impose a kind of space-homogeneous structure of transition probabilities, as follows. For $A\in\calN$, let $\calB^A$ be a boundary face of $\calS$, defined as 
\begin{equation}
\calB^A = \{(\bx,i)\in\calS : \varphi(\bx)=A\}; 
\label{eq:boundary}
\end{equation}
$\calB^N$ is the interior space of $\calS$, but we also call it a boundary face. 
Transition probabilities are space-homogeneous in each boundary face; this means that, for $(\bx,i)\in\calS$, $\bz\in\{-1,0,1\}^d$ such that $\bx+\bz\in\mathbb{Z}_+^d$ and $j\in S^{\varphi(\bx+\bz)}$, the transition probabilities are given in the form of 
\begin{align}
P(\bY_{n+1}=(\bx+\bz,j)\,|\,\bY_{n+1}=(\bx,i)) = p_{\bz}^{\varphi(\bx)\,\varphi(\bx+\bz)}(i,j), 
\label{eq:trans_prob}
\end{align}
where we have, for $A=\varphi(\bx)\subset N$, 
\[
\sum_{\bz^{A}\in\{-1,0,1\}^{|A|}}\ \sum_{\bz^{N\setminus A}\in\{0,1\}^{d-|A|}}\ \sum_{j\in S^{\varphi(\bx+(\bz^A,\bz^{N\setminus A}))}} p_{(\bz^A,\bz^{N\setminus A})}^{A,\varphi(\bx+(\bz^{A},\bz^{N\setminus A}))}(i,j) =1.
\]
We call the process $\{\bY_n\}$ a $d$-dimensional skip-free Markov modulated reflecting random walk (MMRRW in short, it was called a reflecting random walk with a background process, in Ozawa \cite{Ozawa12}); we denote it by $\calL$. 
In Ozawa \cite{Ozawa12}, $\calL$ was assumed to be irreducible, but in this paper, we assume $\calL$ is semi-irreducible and denote by $\calS_0$ its irreducible class. 
Let $\bxi_n=(\xi_{1,n},\xi_{2,n},...,\xi_{d,n})$ be the vector of increments defined as $\bxi_n=\bX_n-\bX_{n-1}$. By the assumption of skip-free property, we have $\bxi_n\in {\{-1,0,1\}^d}$. For $\by\in\calS$, we denote by $\balpha(\by)$ the vector of the conditional mean increments given that the chain is in state $\by$, i.e., 
\begin{align*}
&\balpha(\by)= (\alpha_1(\by),\alpha_2(\by),\cdots,\alpha_d(\by)),\  
\alpha_l(\by) =E(\xi_{l,n+1}\,|\,\bY_n=\by),\ l\in N. 
\end{align*}

An important clue to understand stability of multidimensional MMRRWs is the concept of induced Markov chain, which was introduced by Malyshev \cite{Malyshev93} and Fayolle et al.\ \cite{Fayolle95}. 
Let $A$ be a non-empty subset of $N$. The induced Markov chain of $\calL$ with respect to $A$, denoted by $\calL^A$, is a $(d-|A|)$-dimensional MMRRW, $\{(\hat{\bX}^{N\setminus A}_n,\hat{J}_n)\}$, whose state space is $\calS^A$ defined as 
\begin{equation}
\calS^A = \bigcup_{\bx^{N\setminus A}\in\mathbb{Z}_+^{d-|A|}} \left( \{\bx^{N\setminus A}\}\times S^{\varphi(\bx^{N\setminus A},\btwo^A)} \right)
\end{equation}
and whose transition probabilities are given by, for $(\bx^{N\setminus A},i)\in\calS^A$ and $\bz^{N\setminus A}\in\{-1,0,1\}^{d-|A|}$ such that $\bx^{N\setminus A}+\bz^{N\setminus A}\in\mathbb{Z}_+^{d-|A|}$ and for $j\in S^{\varphi(\bx^{N\setminus A}+\bz^{N\setminus A},\btwo^A)}$, 
\begin{align}
&P\Big( (\hat{\bX}^{N\setminus A}_{n+1},\hat{J}_{n+1})=(\bx^{N\setminus A}+\bz^{N\setminus A},j)\,\big|\,(\hat{\bX}^{N\setminus A}_n,\hat{J}_n)=(\bx^{N\setminus A},i) \Big) \cr
&\qquad = \sum_{\bz^A\in\{-1,0,1\}^{|A|}} p_{(\bz^{N\setminus A},\bz^A)}^{\varphi(\bx^{N\setminus A},\btwo^A)\,\varphi(\bx^{N\setminus A}+\bz^{N\setminus A},\btwo^A)}(i,j). 
\end{align}
In Ozawa \cite{Ozawa12}, for all $A\subset N$ such that $A\ne\emptyset$, $\calL^A$ was assumed to be irreducible, but in this paper, we assume that it is semi-irreducible or that it has no irreducible class and every state in $\calS^A$ is transient. If $\calL^A$ is semi-irreducible and positive recurrent, then we denote its stationary distribution by $\bpi^A=(\pi^A(\bx^{N\setminus A},i),\,(\bx^{N\setminus A},i)\in\calS^A)$. Note that $\calL^N$ is a finite semi-irreducible Markov chain and it is always positive recurrent. 
For $\calL^A$ being positive recurrent, we define the $d$-dimensional mean increment vector, $\ba(A)=(a_l(A),\,l\in N)$, by 
\begin{equation}
a_l(A) = \sum_{(\bx^{N\setminus A},i)\in\calS^A} \alpha_l((\bx^{N\setminus A},\btwo^A),i)\,\pi^A(\bx^{N\setminus A},i),\ l\in N. 
\label{eq:aA}
\end{equation}
From the definition, it can be seen that $a_l(A) = 0$ for all $l\in N\setminus A$. The mean increment vectors play a crucial role in analyzing stability of the multidimensional MMRRW. 

\medskip
{\it An example of MMRRW.}\quad Consider a single-class $3$-station generalized Jackson network with Markovian arrivals and phase-type services. The behavior of this model can be represented as a continuous-time Markov chain $\{\bar{\bY}(t)\}=\{((\bar{X}_{1}(t),\bar{X}_{2}(t),\bar{X}_{3}(t)), \bar{J}(t))\}$, where $\bar{X}_i(t)$ is the number of customers in station $i$ at time $t$ and $\bar{J}(t)$ is the state (phase) of the process combining the Markovian arrival processes and phase-type service processes. 
This $\{\bar{\bY}(t)\}$ is a continuous-time version of $3$-dimensional MMRRW and, by the uniformization technique, we obtain a discrete-time $3$-dimensional MMRRW, $\{\bY_n\}=\{((X_{1,n},X_{2,n},X_{3,n}),J_n)\}$, denoted by $\calL$. 
We consider that the discrete-time MMRRW represents the behavior of a discrete-time $3$-station network corresponding to the original continuous-time network. 
For $A\subset N=\{1,2,3\}$ such that $A\ne\emptyset$, the induced Markov chain $\calL^A$ generated from $\calL$, $\{(\hat{\bX}^{N\setminus A}_n,\hat{J}_n)\}$, is a $(3-|A|)$-dimensional MMRRW that represents, for $i\in N\setminus A$, the process of the number of customers in station $i$ as well as the background process under the condition that, for $j\in A$, station $j$ is saturated by customers. 
For $i\in N\setminus A$, the $i$-th element of the mean increment vector $\ba(A)$ is the mean increment rate of the number of customers in station $i$; for $i\in A$, since station $i$ is considered to be saturated by customers, we regard the $i$-th element of $\ba(A)$ as the difference of the mean input rate (mean arrival rate) and the mean output rate (mean departure rate) of station $i$. 
For example, if $A=\{1,2\}$, then $\calL^{\{1,2\}}$ is the one-dimensional MMRRW, $\{(\hat{\bX}_{3,n},\hat{J}_n)\}$, where $\hat{\bX}_{3,n}$ is the number of customers in station 3 and $\hat{J}_n$ is the state of the background process under the condition that both stations 1 and 2 are saturated by customers. If $\calL^{\{1,2\}}$ is positive recurrent, we have $a_3(\{1,2\})=0$; $a_1(\{1,2\})$ is given by the mean arrival rate of station 1 minus the mean departure rate of the same station and $a_2(\{1,2\})$ is analogously given. 
We see from this example that it is rather easy to understand the induced Markov chains of a multidimensional MMRRW arising from a queueing network. 

\medskip
Assume that each induced Markov chain is positive recurrent or transient, and let $\calN_p$ be the index set of positive-recurrent induced Markov chains, i.e., 
\[
\calN_p = \{A\subset N: \mbox{$A\ne\emptyset$ and $\calL^A$ is positive recurrent}\}. 
\]
Let $U=(\bu_j)=(u_{i,j})$ be a $d\times d$ symmetric matrix, where $\bu_j$ is the $j$-th column composing $U$, and define a set of $d\times d$ positive definite matrices, $\,\calU$, as 
\begin{align*}
\calU &= \Big\{ U=(\bu_j) : \mbox{$U$ is positive definite}\ \mbox{and}\ \langle\ba(A),\bu_j\rangle < 0\ \mbox{for all $A\in\calN_p$ and all $j\in A$} \Big\}, 
\end{align*}
where $\langle\bx,\bx'\rangle$ is the inner product of vectors $\bx,\bx'\in\mathbb{R}^d$ and $\ba(A)$ is the mean increment vector evaluated by the stationary distribution of the induced Markov chain $\calL^A$ being positive recurrent. 
Theorem 4.1 of Ozawa \cite{Ozawa12} can easily be extended to the case where the $d$-dimensional MMRRW, $\calL$, is semi-irreducible and we obtain the following theorem, which gives a condition on which $\calL$ is positive recurrent.

\begin{theorem} \label{th:dpositive}
The $d$-dimensional semi-irreducible MMRRW is positive recurrent in our sense if $\,\calU\ne\emptyset$.
\end{theorem}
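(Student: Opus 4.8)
The statement is the natural extension of Theorem 4.1 of Ozawa \cite{Ozawa12} from the irreducible to the semi-irreducible setting, so the plan is to re-run Ozawa's Foster--Lyapunov argument, changing only the final step. Ozawa's proof of positive recurrence for an irreducible $\calL$ fixes a matrix $U=(\bu_j)\in\calU$, builds a test (Lyapunov) function $f$ on $\calS$ out of the quadratic form associated with $U$, and verifies a uniform negative-drift inequality outside a finite set; positive recurrence then follows from Foster's theorem. I would import the construction of $f$ and all of the drift estimates essentially verbatim, and replace the concluding appeal to Foster's theorem for irreducible chains by its semi-irreducible counterparts, Theorem \ref{th:Foster1} and Corollary \ref{co:Foster2}, which are already available in this section.

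Concretely, I would take $f$ to be (a smoothed version of) $\langle\bx,U\bx\rangle^{1/2}$, so that its one-step increment is, to leading order, proportional to $\langle\bxi,U\bx\rangle$. Deep in a boundary face $\calB^A$ — where the coordinates indexed by $A$ are large and the rest vanish — the front process, watched together with the background process, behaves like the induced chain $\calL^A$, so that the conditional mean increment, after averaging over the background process, converges to $\ba(A)$. Since $a_l(A)=0$ for $l\notin A$, the leading term of the averaged drift of $f$ reduces to a positive combination of the numbers $\langle\ba(A),\bu_j\rangle$, $j\in A$, each of which is strictly negative by the definition of $\calU$; this gives the required drift $\le-\varepsilon$ once the $A$-coordinates are large. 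The averaging over the background process is exactly why a single-step drift does not suffice: one must observe the chain over a window whose length $T_k$ (encoded in the time-change function $\tau$ of Corollary \ref{co:Foster2}) is taken large enough, in each region, for the background process to approach the stationary behaviour of the relevant induced chain. Crucially, all of these estimates depend only on the transition probabilities $p_{\bz}^{A,A'}(i,j)$ and on the induced-chain quantities $\ba(A)$, not on whether $\calL$ is irreducible.

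The only role played by irreducibility in Ozawa's argument is in the final invocation of Foster's theorem, and this is where the extension is made. I would replace it by Corollary \ref{co:Foster2}, whose hypotheses are (i) the negative-drift inequalities (\ref{eq:Foster2a})--(\ref{eq:Foster2b}), which have just been established, and (ii) that the embedded chain $\{\tilde{Z}_n\}$ is semi-irreducible with the same irreducible class $\calS_0$ as $\calL$. To verify (ii), I would use that $\calL$ is semi-irreducible and that $\tau$ takes finitely many, uniformly bounded, positive-integer values: accessibility of $\calS_0$ from every state, and the fact that $\calS_0$ is a closed communicating class, are both preserved under sampling at the times $t_n$, so $\{\tilde{Z}_n\}$ again has $\calS_0$ as its unique irreducible class.

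I expect the genuine difficulty to be twofold. First, one must confirm that the drift estimate also holds over the faces $A\notin\calN_p$, where $\calL^A$ is transient and $\ba(A)$ is not even defined; there the chain is pushed out of a neighbourhood of the face toward faces in $\calN_p$ or toward the interior before the window $T_k$ elapses, and it must be checked that $f$ still decreases on average. This is the technical heart of Ozawa's original argument, and the point to confirm is merely that it rests on the induced transition structure alone, hence survives unchanged when $\calL$ is only semi-irreducible. Second, one must make the choices of the partition $\{\calV_k\}$, the times $T_k$, and the finite exceptional set $\calV$ mutually compatible; but since semi-irreducibility interacts with none of these choices, the real work is bookkeeping in transferring Ozawa's estimates rather than any new analytic input.
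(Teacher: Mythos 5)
Your proposal is correct and takes essentially the same approach as the paper: the paper omits the proof, stating precisely that Theorem~\ref{th:dpositive} follows by using Corollary~\ref{co:Foster2} in the manner of the proof of Theorem~4.1 of Ozawa~\cite{Ozawa12}, which is exactly the substitution you carry out. The details you supply (the quadratic-form test function built from $U$, the averaging windows encoded in $\tau$, the check that the embedded chain retains the irreducible class $\calS_0$, and the treatment of faces whose induced chains are transient) are consistent with that outline.
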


Since the extension is straightforward and the theorem can be proved by using Corollary \ref{co:Foster2} in a manner similar to that used for proving the original one in Ozawa \cite{Ozawa12}, we omit the proof of Theorem \ref{th:dpositive}. This theorem asserts that if we can find out a positive definite matrix whose elements satisfy a system of linear inequalities, then the corresponding semi-irreducible MMRRW is positive recurrent. 
We will use Theorem \ref{th:dpositive} to derive a condition on which our two-station network is positive recurrent and hence we do not use Theorem \ref{th:Foster1} and Corollary \ref{co:Foster2} for the purpose.

%
%
%
\section{Stability and instability of the two-station network} \label{sec:positive}

Here we return to the two-station network considered in Section \ref{sec:modelandresults}, where the behavior of the network was represented as the continuous-time Markov chain $\{\bY(t)\}=\{(\bX(t),\bJ(t))\}$, and prove Theorem \ref{th:main0}. 
For the purpose, we construct a discrete-time Markov chain $\{\bY_n\}$, which is a four-dimensional MMRRW, from $\{\bY(t)\}$ by the uniformization technique and apply the theorems in the previous section to the MMRRW $\{\bY_n\}$. 
In this section, we use the same notations as those in Section \ref{sec:modelandresults}.

\subsection{Markov modulated reflecting random walk} \label{sec:modelandinducded}

Remind that the Markov chain $\{\bY(t)\}$ is assumed to be semi-irreducible, its state space is given by $\calS=\mathbb{Z}_+^4\times S_0$ and its infinitesimal generator is given by $Q=(Q(\bx,\bx'),\,\bx,\bx'\in\mathbb{Z}_+^4)$. 
Since the front process of $\{\bY(t)\}$, $\{\bX(t)\}$, is skip-free in all coordinates, the diagonal elements of $Q$ are bounded, i.e., for some positive number $\nu$, we have
\[
\sup_{\bx\in\mathbb{Z}_+^4} \max_{j\in S_0} \left|[Q(\bx,\bx)]_{j,j} \right| \le \nu <\infty, 
\]
where, for a matrix $A$, we denote by $[A]_{i,j}$ the $(i,j)$-element of $A$.
Define a transition probability matrix $P=(P(\bx,\bx'), \bx,\bx'\in\mathbb{Z}_+^4)$ as $P = I + Q/\nu$, 
where we have, for $\bx,\bx'\in\mathbb{Z}_+^4$, 
\[
P(\bx,\bx') = \left\{ \begin{array}{ll} 
I+Q(\bx,\bx')/\nu & \mbox{if $\bx=\bx'$}, \cr 
Q(\bx,\bx')/\nu & \mbox{if $\bx\ne\bx'$}. \end{array} \right.
\]
By setting $\nu$ at an appropriate value, $P$ becomes aperiodic; hence, hereafter, we assume $P$ is aperiodic. 
Let $\{\bY_n\}=\{(\bX_n,\bJ_n)\}$ be a discrete-time Markov chain governed by the transition probability matrix $P$, where $\bX_n=(X_{1,n},X_{2,n},X_{3,n},X_{4,n})$, and $\bJ_n=(J^a_{1,n},J^a_{3,n},J^s_{1,n},J^s_{2,n})$. By the definition of $P$, $\{\bY_n\}$ is semi-irreducible with the same irreducible class as that of $\{\bY(t)\}$. 
Furthermore, $\{\bY(t)\}$ and $\{\bY_n\}$ has the same stationary distribution if it exists. Hence, we see that $\{\bY(t)\}$ is positive recurrent in our sense if and only if $\{\bY_n\}$ is; we also see that $\{\bY(t)\}$ is transient in our sense if and only if $\{\bY_n\}$ is. For our purpose, it is, therefore, sufficient to analyze stability of $\{\bY_n\}$. 
The state space of $\{\bY_n\}$ is given by $\calS=\mathbb{Z}_+^4\times S_0$. For $A\subset N$, We dente by $\calB^A$ a boundary face of $\calS$ given by 
\[
\calB^A = \{(\bx,\bj)\in\mathbb{Z}_+^4\times S_0: \varphi(\bx)=A\},
\]
where $\varphi$ is the function defined in Section \ref{sec:preliminary}. 
Since the transition rates of $\{\bY(t)\}$ are space-homogeneous in each boundary face, the transition probabilities of $\{\bY_n\}$ are also space-homogeneous in each boundary face. Furthermore, the front process of $\{\bY_n\}$, $\{\bX_n\}$, is skip-free in all coordinates. Hence, the Markov chain $\{\bY_n\}$ is a four-dimensional MMRRW. 
Note that, since we have $\varphi(\bx)=S_0$ for all $\bx\in\mathbb{Z}_+^4$, the state space of the background process $\{\bJ_n\}$ is given by $S_0$ for all $\bx\in\mathbb{Z}_+^4$. 

We consider that the Markov chain $\{\bY_n\}$ represents the behavior of a discrete-time two-station network corresponding to the continuous-time two-station network described in Section \ref{sec:modelandresults}. The discrete-time two-station network has the same network configuration as that of the original two-station network. 
For $i=1,3$, the arrival process of Q$_i$ is a discrete-time MAP with representation $(C_i,D_i)$, where $C_i=I+\bar{C}_i/\nu$ and $D_i=\bar{D}_i/\nu$, and, for $i=1,2$, the service process of station $i$ is a discrete-time MSP whose representation is given by $U_i^{k^*l},k,l=0,+$, \ $U_i^{kl^*},k,l=0,+$, and 
\begin{align*}
&T_i^{kl}=I+\bar{T}_i^{kl}/\nu,\ k,l=0,+,\quad
T_i^{k^*l}=\bar{T}_i^{k^*l}/\nu,\ k=1,2,\,l=0,+,\cr
&T_i^{kl^*}=\bar{T}_i^{kl^*}/\nu,\ k=0,+,\,l=1,2.
\end{align*}
For $i=1,3$, the mean arrival rate of Q$_i$ is given by $\lambda_i/\nu$ and, for $i\in N$, the mean service time of class-$i$ customers is given by $\nu h_i$. Hence, for $i\in N$, the offered of Q$_i$ is given by $\rho_i$, which is the same as that of the original two-station network. 
Let $A$ be a non-empty subset of $N$ and denote by $\{\bY_n^A\}=\{(\hat{\bX}_n^{N\setminus A},\hat{\bJ}_n)\}$ the process obtained from $\{\bY_n\}$ by assuming that, for every $i\in A$, Q$_i$ is saturated by customers, where $\hat{\bX}_n^{N\setminus A}=(\hat{X}_{i,n},i\in N\setminus A)$ and $\hat{\bJ}_n=(\hat{J}_{1,n}^a,\hat{J}_{3,n}^a,\hat{J}_{1,n}^s,\hat{J}_{2,n}^s)$; the state space of $\{\bY_n^A\}$ is given by $S^A=\mathbb{Z}_+^{4-|A|}\times S_0$. 
Since $\{\bY^A(t)\}$ is assumed to be semi-irreducible or to have no irreducible class, the Markov chain $\{\bY_n^A\}$ is also semi-irreducible or has no irreducible class; in the latter case, every state of $\{\bY_n^A\}$ is transient. 
If the Markov chain $\{\bY_n^A\}$ is semi-irreducible and positive recurrent, we define, for $i\in N$, the mean input rate and mean output rate of Q$_i$ in the process $\{\bY_n^A\}$, denoted by $\lambda_i^A$ and $\mu_i^A$, as the mean number of customers arriving at Q$_i$ at a time point and that of customers departing from Q$_i$ at a time point, respectively;  $\lambda_i^A$ and $\mu_i^A$ are given by $\lambda_i^A=\bar{\lambda}_i^A/\nu$ and $\mu_i^A=\bar{\mu}_i^A/\nu$. 
We see from the definition of $\{\bY_n^A\}$ that the Markov chain $\{\bY_n^A\}$ is also the induced Markov chain $\calL^A$ of the MMRRW $\{\bY_n\}$, and hence we obtain, for $i\in N$, 
\[
\lambda_i^A-\mu_i^A = a_i(A),
\]
where $a_i(A)$ is the $i$-th element of the mean increment vector $\ba(A)$ of the induced Markov chain $\calL^A$.
For $i\in N$, $\Delta q_i^A$ is, therefore, given in terms of $a_i(A)$ as $\Delta q_i^A = \nu a_i(A)$. Hence, conditions (\ref{eq:dqN}) to (\ref{eq:dq23}) in Section \ref{sec:modelandresults} are equivalent to 
\begin{align}
& a_1(N)>0,\quad a_2(N)<0,\quad a_3(N)>0,\quad a_4(N)<0, \label{eq:aN} \\
& a_1(\{1,2,3\})<0,\quad a_2(\{1,2,3\})>0,\quad a_3(\{1,2,3\})>0, \label{eq:a123} \\
& a_1(\{1,3,4\})>0,\quad a_3(\{1,3,4\})<0,\quad a_4(\{1,3,4\})>0, \label{eq:a134} \\
& a_1(\{1,4\})>0,\quad a_4(\{1,4\})<0, \label{eq:a14} \\
& a_2(\{2,3\})<0,\quad a_3(\{2,3\})>0,  \label{eq:a23}
\end{align}
where we have
\begin{align*}
&\ba(N)=(a_1(N),a_2(N),a_3(N),a_4(N)), \cr 
&\ba(\{1,2,3\})=(a_1(\{1,2,3\}),a_2(\{1,2,3\}),a_3(\{1,2,3\}),0), \cr
&\ba(\{1,3,4\})=(a_1(\{1,3,4\}),0,a_3(\{1,3,4\}),a_4(\{1,3,4\})), \cr
&\ba(\{1,4\})=(a_1(\{1,4\}),0,0,a_4(\{1,4\})), \cr
&\ba(\{2,3\})=(0,a_2(\{2,3\}),a_3(\{2,3\}),0).
\end{align*}
We present the following proposition, which makes clear which induced Markov chain is positive recurrent. 

\begin{proposition} \label{pr:inducedMarkovchain}
Induce Markov chain $\calL^N$ is always positive recurrent. 
Under condition (\ref{eq:aN}), if induced Markov chains $\calL^{\{1,2,3\}}$ and $\calL^{\{1,3,4\}}$ are semi-irreducible, then they are positive recurrent; on the other hand, induced Markov chains $\calL^{\{1,2,4\}}$ and $\calL^{\{2,3,4\}}$ are transient. 
Under conditions (\ref{eq:aN}) to (\ref{eq:a134}), if induced Markov chains $\calL^{\{1,4\}}$ and $\calL^{\{2,3\}}$ are semi-irreducible, then they are positive recurrent; on the other hand, induced Markov chains $\calL^{\{1,2\}}$, $\calL^{\{1,3\}}$, $\calL^{\{2,4\}}$ and $\calL^{\{3,4\}}$ are transient. 
Under conditions (\ref{eq:aN}) to (\ref{eq:a23}), induced Markov chains $\calL^{\{1\}}$, $\calL^{\{2\}}$, $\calL^{\{3\}}$ and $\calL^{\{4\}}$ are transient. 
\end{proposition}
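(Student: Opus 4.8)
The plan is to exploit the recursive structure of the induced chains: each $\calL^A$ is itself a $(4-|A|)$-dimensional MMRRW whose own induced chains are the $\calL^{A\cup B}$ with $B\subseteq N\setminus A$, and whose mean increment vectors are the projections of $\ba(A\cup B)$ onto the free coordinates $N\setminus A$. Every stability question thereby reduces to the sign data recorded in (\ref{eq:aN})--(\ref{eq:a23}). The base cases are immediate: $\calL^N$ is finite, hence positive recurrent, and each chain with $|A|=3$ is one-dimensional with a single free coordinate $l\in N\setminus A$ whose only nontrivial induced chain is $\calL^N$, so its saturated drift is exactly the component $a_l(N)$. I would invoke the one-dimensional instance of Theorem~\ref{th:dpositive} to obtain positive recurrence when $a_l(N)<0$ (this gives $\calL^{\{1,2,3\}}$ and $\calL^{\{1,3,4\}}$ via $a_4(N)<0$, $a_2(N)<0$), and Theorem~\ref{th:Markov_transient} with a test function $f=x_l+g$, where $g$ solves the Poisson equation for the finite saturated background so that the interior drift of $f$ becomes the constant $a_l(N)$, to obtain transience when $a_l(N)>0$ (this gives $\calL^{\{1,2,4\}}$, $\calL^{\{2,3,4\}}$ via $a_3(N)>0$, $a_1(N)>0$).

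For the two positive-recurrent two-dimensional chains I would apply Theorem~\ref{th:dpositive} directly. Take $\calL^{\{1,4\}}$, a two-dimensional MMRRW on coordinates $\{2,3\}$; by the previous step its index set of positive-recurrent induced chains is $\calN_p=\{\{3\},\{2,3\}\}$, since $\calL^{\{1,2,4\}}$ is transient while $\calL^{\{1,3,4\}}$ and $\calL^N$ are positive recurrent. It then suffices to produce a positive definite $U=(\bu_2,\bu_3)$ with
\begin{gather*}
a_3(\{1,3,4\})\,u_{3,3}<0,\qquad a_2(N)\,u_{2,2}+a_3(N)\,u_{3,2}<0,\qquad a_2(N)\,u_{2,3}+a_3(N)\,u_{3,3}<0.
\end{gather*}
A diagonal $U$ cannot work because $a_3(N)>0$ forces the last inequality to fail; instead one takes a strictly positive off-diagonal entry. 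Using $a_3(\{1,3,4\})<0$, $a_2(N)<0$ and $a_3(N)>0$, one checks that choosing $u_{3,3}>0$ small, $u_{2,3}=u_{3,2}>0$ large enough to dominate the third inequality, and $u_{2,2}>0$ large enough to dominate both the second inequality and the determinant condition yields such a matrix, so $\calU\neq\emptyset$ and $\calL^{\{1,4\}}$ is positive recurrent. The chain $\calL^{\{2,3\}}$ is handled identically after interchanging $(2,3)\leftrightarrow(1,4)$ and using $a_1(\{1,2,3\})<0$ together with $a_1(N)>0$, $a_4(N)<0$.

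The remaining claims are transience statements, and for these I would use Theorem~\ref{th:Markov_transient} with a (piecewise-)linear test function $f=\langle\bc,\bx^{N\setminus A}\rangle$ carrying nonnegative weights $\bc$, plus a bounded background correction; condition~(iii) holds automatically by skip-freeness and condition~(i) by the unboundedness of the irreducible class, so everything reduces to bounding the drift below by $\varepsilon>0$ on $\calA=\{f>c\}$. In the interior the drift is $\langle\bc,\ba(N)\rangle$ after the Poisson correction, and one exploits the strictly positive push-up of an empty queue (a reflecting coordinate cannot decrease) to absorb unfavorable face drifts by tuning the ratios among the $c_l$. The cases in which a saturate-one induced chain is already transient ($\calL^{\{2,4\}}$, and one branch for $\calL^{\{1,2\}}$, $\calL^{\{3,4\}}$) are the most approachable, since they supply a coordinate whose interior drift is positive.

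The hard part will be the two genuinely multidimensional situations. The first is $\calL^{\{1,3\}}$: both saturate-one induced chains $\calL^{\{1,2,3\}}$ and $\calL^{\{1,3,4\}}$ are positive recurrent, so the positive-recurrence criterion of Theorem~\ref{th:dpositive} only fails marginally and the interior drift $(a_2(\{1,3\}),a_4(\{1,3\}))$ vanishes; transience is forced instead by the two boundary faces pushing outward, $a_2(\{1,2,3\})>0$ and $a_4(\{1,3,4\})>0$, so the test function must be tuned to the boundary rather than to the null interior drift. The second is the family $|A|=1$, where no single coordinate escapes uniformly: for instance in $\calL^{\{1\}}$ the coordinate $3$ drifts up in the interior ($a_3(N)>0$) and on the face $x_4=0$ ($a_3(\{1,2,3\})>0$) but drifts down on the face $x_2=0$ since $a_3(\{1,3,4\})<0$, so a genuinely multidimensional Lyapunov function combining coordinates is required. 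The crux in both is matching one function to several faces that carry different finite background dynamics, hence different Poisson corrections, while keeping the embedded-chain drift in Theorem~\ref{th:Markov_transient} uniformly positive; the nonnegativity of the reflection terms at empty queues is the main lever for reconciling them.
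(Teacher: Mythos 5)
Your reduction scheme (each $\calL^A$ is an MMRRW whose own induced chains are the $\calL^{A\cup B}$, with drift vectors obtained by projecting $\ba(A\cup B)$) is exactly the paper's, and your treatment of the positive-recurrence claims is sound: $\calL^N$ is finite; the one-dimensional chains follow from the sign of the single saturated drift; and your explicit construction of a positive definite $U$ with a large off-diagonal entry for $\calL^{\{1,4\}}$ and $\calL^{\{2,3\}}$ is a legitimate direct application of Theorem \ref{th:dpositive}, where the paper instead cites the two-dimensional classification (Theorem 5.1 of Ozawa (2012)). That substitution is fine and even a bit more self-contained.

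The genuine gap is in the transience claims, which are half of the proposition. For $\calL^{\{1,3\}}$ and for the four three-dimensional chains $\calL^{\{1\}},\dots,\calL^{\{4\}}$ you describe the difficulty and a strategy, but do not give a proof, and the strategy as described would fail. Concretely, for $\calL^{\{1,3\}}$ the interior drift is $(a_2(N),a_4(N))$, which is strictly negative in both components by (\ref{eq:aN}) --- it does not ``vanish'' as you assert; the quantities $a_2(\{1,3\})$, $a_4(\{1,3\})$ you write down are only defined (and then automatically zero) if $\calL^{\{1,3\}}$ is positive recurrent, which is precisely what is being disproved. Consequently any test function $f=\langle\bc,\bx\rangle+g$ with nonnegative weights and bounded background correction has strictly negative drift throughout the interior, so condition (ii) of Theorem \ref{th:Markov_transient} cannot hold on $\calA=\{f>c\}$ with a one-step embedding: the escape to infinity occurs only along the boundary faces (where $a_2(\{1,2,3\})>0$ and $a_4(\{1,3,4\})>0$ while the transverse coordinate stays tight), and detecting it requires the multi-region embedded chain with face-dependent time scales $T_A$ on the sets $\calV_A$ (so that the law of large numbers of each positive-recurrent face-induced chain replaces pointwise Poisson corrections), together with a face-adapted, typically max-of-linear, test function. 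The same applies, with more bookkeeping, to $\calL^{\{1\}},\dots,\calL^{\{4\}}$, and in fact even your ``approachable'' cases $\calL^{\{1,2\}}$, $\calL^{\{3,4\}}$ need this time-averaging device, because a single bounded $g$ cannot solve the Poisson equation simultaneously for the interior dynamics and the face-$\{3\}$ dynamics. Carrying this out amounts to reproving the two- and three-dimensional classification results; the paper avoids it entirely by citing Theorem 5.1 and Tables 2 and 3 of Ozawa (2012), matching each chain's sign pattern (e.g.\ $\calL^{\{1\}}$ is case C2-3-1, $\calL^{\{2\}}$ is case C3-2-2). As written, your argument establishes the positive-recurrence half of the proposition but leaves the transience half unproved.
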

Note that, in this proposition, every induced Markov chain of $\{\bY_n\}$ is assumed to be semi-irreducible or to have no irreducible class. 
\begin{proof}[Proof of Proposition \ref{pr:inducedMarkovchain}]
First we must say that the results of Ozawa \cite{Ozawa12} can be applied to MMRRWs that are semi-irreducible. 
Since every induced Markov chain of $\{\bY_n\}$ is assumed to be semi-irreducible or to have no irreducible class and induce Markov chain $\calL^N$ is a finite Markov chain, $\calL^N$ must be semi-irreducible and positive recurrent; hence the mean increment vector $\ba(N)$ exists. 

Induced Markov chain $\calL^{\{1,2,3\}}$ is a one-dimensional MMRRW, denoted by $\{(\hat{X}_{4,n},\hat{\bJ}_n)\}$, and it has the induced Markov chain, which we denote by $\tilde{\calL}^{\{4\}}$; $\tilde{\calL}^{\{4\}}$ is equivalent to the induced Markov chain $\calL^N$ and its mean increment vector, denoted by $\tilde{\ba}(\{4\})$, is given by $\tilde{\ba}(\{4\})=(a_4(N))$, where $a_4(N)$ is the fourth element of the mean increment vector $\ba(N)$ of $\calL^N$. 
Since $\calL^{\{1,2,3\}}$ is semi-irreducible and $a_4(N)<0$, we see from the results of Ozawa \cite{Ozawa12} that it is positive recurrent and the mean increment vector $\ba(\{1,2,3\})$ exists. 
Analogously, we see that $\calL^{\{1,3,4\}}$ is also positive recurrent and the mean increment vector $\ba(\{1,3,4\})$ exists. 
Induced Markov chain $\calL^{\{1,2,4\}}$ is a one-dimensional MMRRW, denoted by $\{(\hat{X}_{3,n},\hat{\bJ}_n)\}$, and it has the induced Markov chain $\tilde{\calL}^{\{3\}}$, which is positive recurrent and whose mean increment vector is given by $\tilde{\ba}(\{3\})=(a_3(N))$. 
If $\calL^{\{1,2,4\}}$ is semi-irreducible, it is transient since we have $a_3(N)>0$; if $\calL^{\{1,2,4\}}$ has no irreducible class, every state of $\calL^{\{1,2,4\}}$ must be transient. 
Analogously, we see that $\calL^{\{2,3,4\}}$ is transient. 

Induced Markov chain $\calL^{\{1,2\}}$ is a two-dimensional MMRRW, denoted by $\{((\hat{X}_{3,n},\hat{X}_{4,n}),\hat{\bJ}_n)\}$, and it has three induced Markov chains, which we denote by $\tilde{\calL}^{\{3,4\}}$, $\tilde{\calL}^{\{3\}}$ and $\tilde{\calL}^{\{4\}}$, respectively. 
$\tilde{\calL}^{\{3,4\}}$ is equivalent to $\calL^N$ and its mean increment vector, denoted by $\tilde{\ba}(\{3,4\})$, is given by $\tilde{\ba}(\{3,4\})=(a_3(N),a_4(N))$; $\tilde{\calL}^{\{3\}}$ is equivalent to $\calL^{\{1,2,3\}}$ and its mean increment vector $\tilde{\ba}(\{3\})$ is given by $\tilde{\ba}(\{3\})=(a_3(\{1,2,3\}),0)$; $\tilde{\calL}^{\{4\}}$ is equivalent to $\calL^{\{1,2,4\}}$ and it is transient. 
By Theorem 5.1 of Ozawa \cite{Ozawa12}, if $\calL^{\{1,2\}}$ is semi-irreducible, it is transient since we have that $a_3(N)>0$, $a_4(N)<0$ and $a_3(\{1,2,3\})>0$; if $\calL^{\{1,2\}}$ has no irreducible class, every state of $\calL^{\{1,2\}}$ must be transient. 
Analogously, we see that $\calL^{\{3,4\}}$ is transient. 
Induced Markov chain $\calL^{\{1,3\}}$ is a two-dimensional MMRRW, denoted by $\{((\hat{X}_{2,n},\hat{X}_{4,n}),\hat{\bJ}_n)\}$, and it has three induced Markov chains, denoted by $\tilde{\calL}^{\{2,4\}}$, $\tilde{\calL}^{\{2\}}$ and $\tilde{\calL}^{\{4\}}$, respectively. 
$\tilde{\calL}^{\{2,4\}}$ is equivalent to $\calL^N$ and its mean increment vector, denoted by $\tilde{\ba}(\{2,4\})$, is given by $\tilde{\ba}(\{2,4\})=(a_2(N),a_4(N))$; $\tilde{\calL}^{\{2\}}$ is equivalent to $\calL^{\{1,2,3\}}$ and its mean increment vector $\tilde{\ba}(\{2\})$ is given by $\tilde{\ba}(\{2\})=(a_2(\{1,2,3\}),0)$; $\tilde{\calL}^{\{4\}}$ is equivalent to $\calL^{\{1,3,4\}}$ and its mean increment vector $\tilde{\ba}(\{4\})$ is given by $\tilde{\ba}(\{4\})=(0,a_4(\{1,3,4\}))$. 
By Theorem 5.1 of Ozawa \cite{Ozawa12}, if $\calL^{\{1,3\}}$ is semi-irreducible, it is transient since we have that $a_2(N)<0$, $a_4(N)<0$, $a_2(\{1,2,3\})>0$ and $a_4(\{1,3,4\})>0$; if $\calL^{\{1,3\}}$ has no irreducible class, every state of $\calL^{\{1,3\}}$ must be transient. 
Induced Markov chain $\calL^{\{1,4\}}$ is a two-dimensional MMRRW, denoted by $\{((\hat{X}_{2,n},\hat{X}_{3,n}),\hat{\bJ}_n)\}$, and it has three induced Markov chains, denoted by $\tilde{\calL}^{\{2,3\}}$, $\tilde{\calL}^{\{2\}}$ and $\tilde{\calL}^{\{3\}}$, respectively. 
$\tilde{\calL}^{\{2,3\}}$ is equivalent to $\calL^N$ and its mean increment vector, denoted by $\tilde{\ba}(\{2,3\})$, is given by $\tilde{\ba}(\{2,3\})=(a_2(N),a_3(N))$; $\tilde{\calL}^{\{2\}}$ is equivalent to $\calL^{\{1,2,4\}}$ and it is transient; $\tilde{\calL}^{\{3\}}$ is equivalent to $\calL^{\{1,3,4\}}$ and its mean increment vector $\tilde{\ba}(\{3\})$ is given by $\tilde{\ba}(\{3\})=(0,a_3(\{1,3,4\}))$. 
By Theorem 5.1 of Ozawa \cite{Ozawa12}, since $\calL^{\{1,4\}}$ is semi-irreducible and we have that $a_2(N)<0$, $a_3(N)>0$ and $a_3(\{1,3,4\}<0$, $\calL^{\{1,4\}}$ is positive recurrent and the mean increment vector $\ba(\{1,4\})$ exists.  
Analogously, we see that $\calL^{\{2,3\}}$ is positive recurrent and the mean increment vector $\ba(\{2,3\})$ exists. 
Induced Markov chain $\calL^{\{2,4\}}$ is a two-dimensional MMRRW, denoted by $\{((\hat{X}_{1,n},\hat{X}_{3,n}),\hat{\bJ}_n)\}$, and it has three induced Markov chains, denoted by $\tilde{\calL}^{\{1,3\}}$, $\tilde{\calL}^{\{1\}}$ and $\tilde{\calL}^{\{3\}}$, respectively. 
$\tilde{\calL}^{\{1,3\}}$ is equivalent to $\calL^N$ and its mean increment vector, denoted by $\tilde{\ba}(\{1,3\})$, is given by $\tilde{\ba}(\{1,3\})=(a_1(N),a_3(N))$, where $a_1(N)>0$, $a_3(N)>0$. Hence, by Theorem 5.1 of Ozawa \cite{Ozawa12}, if $\calL^{\{1,3\}}$ is semi-irreducible, it is transient; if $\calL^{\{1,3\}}$ has no irreducible class, every state of $\calL^{\{1,3\}}$ must be transient. 

Induced Markov chain $\calL^{\{1\}}$ is a three-dimensional MMRRW, denoted by $\{((\hat{X}_{2,n},\hat{X}_{3,n},\hat{X}_{4,n}),\hat{\bJ}_n)\}$, and it has seven induced Markov chains, which we denote by $\tilde{\calL}^{\{2,3,4\}}$, $\tilde{\calL}^{\{2,3\}}$, $\tilde{\calL}^{\{2,4\}}$, $\tilde{\calL}^{\{3,4\}}$, $\tilde{\calL}^{\{2\}}$, $\tilde{\calL}^{\{3\}}$ and $\tilde{\calL}^{\{4\}}$, respectively. 
$\tilde{\calL}^{\{2,3,4\}}$ is equivalent to $\calL^N$ and its mean increment vector, denoted by $\tilde{\ba}(\{2,3,4\})$, is given by $\tilde{\ba}(\{2,3,4\})=(a_2(N),a_3(N),a_4(N))$; $\tilde{\calL}^{\{2,3\}}$ is equivalent to $\calL^{\{1,2,3\}}$ and its mean increment vector $\tilde{\ba}(\{2,3\})$ is given by $\tilde{\ba}(\{2,3\})=(a_2(\{1,2,3\}),a_3(\{1,2,3\}),0)$; other induced Markov chains of $\calL^{\{1\}}$ do not affect the stability of $\calL^{\{1\}}$.  
Since we have that $a_2(N)<0$, $a_3(N)>0$, $a_4(N)<0$, $a_2(\{1,2,3\})>0$ and $a_3(\{1,2,3\})>0$, the three-dimensional MMRRW $\calL^{\{1\}}$ belongs C2-3-1 in Table 2 of Ozawa \cite{Ozawa12}. Hence, if $\calL^{\{1\}}$ is semi-irreducible, it is transient; if $\calL^{\{1\}}$ has no irreducible class, every state of $\calL^{\{1\}}$ must be transient. 
Analogously, we see that $\calL^{\{3\}}$ is transient. 
Induced Markov chain $\calL^{\{2\}}$ is a three-dimensional MMRRW, denoted by $\{((\hat{X}_{1,n},\hat{X}_{3,n},\hat{X}_{4,n}),\hat{\bJ}_n)\}$, and it has seven induced Markov chains, denoted by $\tilde{\calL}^{\{1,3,4\}}$, $\tilde{\calL}^{\{1,3\}}$, $\tilde{\calL}^{\{1,4\}}$, $\tilde{\calL}^{\{3,4\}}$, $\tilde{\calL}^{\{1\}}$, $\tilde{\calL}^{\{3\}}$ and $\tilde{\calL}^{\{4\}}$, respectively. 
$\tilde{\calL}^{\{1,3,4\}}$ is equivalent to $\calL^N$ and its mean increment vector, denoted by $\tilde{\ba}(\{1,3,4\})$, is given by $\tilde{\ba}(\{1,3,4\})=(a_1(N),a_3(N),a_4(N))$; $\tilde{\calL}^{\{1,3\}}$ is equivalent to $\calL^{\{1,2,3\}}$ and its mean increment vector $\tilde{\ba}(\{1,3\})$ is given by $\tilde{\ba}(\{1,3\})=(a_1(\{1,2,3\}),a_3(\{1,2,3\}),0)$; $\tilde{\calL}^{\{3\}}$ is equivalent to $\calL^{\{2,3\}}$ and its mean increment vector $\tilde{\ba}(\{3\})$ is given by $\tilde{\ba}(\{3\})=(0,a_3(\{2,3\}),0)$; other induced Markov chains of $\calL^{\{2\}}$ are transient and they do not affect the stability of $\calL^{\{2\}}$.  
Since we have that $a_1(N)>0$, $a_3(N)>0$, $a_4(N)<0$, $a_1(\{1,2,3\})<0$, $a_3(\{1,2,3\})>0$ and $a_3(\{2,3\})>0$, the three-dimensional MMRRW $\calL^{\{2\}}$ belongs C3-2-2 in Table 3 of Ozawa \cite{Ozawa12}. Hence, if $\calL^{\{2\}}$ is semi-irreducible, it is transient; if $\calL^{\{2\}}$ has no irreducible class, every state of $\calL^{\{2\}}$ must be transient. 
Analogously, we see that $\calL^{\{4\}}$ is transient and this completes the proof. 
\end{proof}

We denote by $\calN_p$ the index set of the induced Markov chains of $\{\bY_n\}$ that are semi-irreducible and positive recurrent; $\calN_p$ is given by 
\begin{equation}
\calN_p =\{N, \{1,2,3\}, \{1,3,4\}, \{1,4\}, \{2,3\}\}, 
\label{eq:Np}
\end{equation}
which has already been appeared in Section \ref{sec:modelandresults}.

\subsection{Proof of Theorem \ref{th:main0}}

For $A\in\calN_p$ and for $i,j\in A$, let $r^A_{i,j}$ be defined as $r^A_{i,j} = \left| a_i(A)/a_j(A) \right|$. As mentioned in the previous subsection, for $A\in\calN_p$ and for $i\in N$, equality $\Delta q_i^A=\nu a_i(A)$ holds; hence $r_1$ and $r_2$ in Theorem \ref{th:main0} are given by 
\begin{equation}
r_1 = r^{\{1,2,3\}}_{2,1}\,r^{\{2,3\}}_{3,2}+r^{\{1,2,3\}}_{3,1},\quad 
r_2 = r^{\{1,3,4\}}_{4,3}\,r^{\{1,4\}}_{1,4}+r^{\{1,3,4\}}_{1,3}. 
\end{equation}
We prove the following corollary under conditions (\ref{eq:aN}) to (\ref{eq:a23}), instead of directly proving Theorem \ref{th:main0}. 

\begin{corollary} \label{co:stability} 
Assume that $r_{1,4}^N \le r_{1,4}^{\{1,4\}}$ and $r_{3,2}^N < r_{3,2}^{\{2,3\}}$ or that $r_{1,4}^N < r_{1,4}^{\{1,4\}}$ and $r_{3,2}^N \le r_{3,2}^{\{2,3\}}$. 
Then, the semi-irreducible Markov chain $\{\bY_n\}$ is positive recurrent in our sense if $r_1 r_2<1$ and it is transient if $r_1 r_2>1$. 
\end{corollary}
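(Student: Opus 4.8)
The plan is to obtain Corollary~\ref{co:stability} from the two general criteria of Section~\ref{sec:preliminary}, applied to the four-dimensional MMRRW $\{\bY_n\}$, using the explicit mean increment vectors of the five positive-recurrent induced chains indexed by $\calN_p$, whose sign patterns are (\ref{eq:aN})--(\ref{eq:a23}). For the transient case $r_1 r_2>1$ I would invoke Theorem~\ref{th:Markov_transient}, and for the positive-recurrent case $r_1 r_2<1$ I would invoke Theorem~\ref{th:dpositive}.

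I would treat transience first, since there the threshold emerges transparently. Take the linear function $f(\bx,\bj)=\langle\bw,\bx\rangle$ with a weight vector $\bw=(w_1,w_2,w_3,w_4)$, all $w_i>0$, together with an embedded chain built from a face-dependent time-change $\tau$ (as in Corollary~\ref{co:Foster2}), chosen so that on the region where the queues indexed by $A$ are large the averaged one-step increment of $f$ is governed by $\langle\bw,\ba(A)\rangle$. Condition (iii) of Theorem~\ref{th:Markov_transient} is immediate from skip-freeness (take $b$ the maximal one-step change of $f$), and condition (i) holds because the empty state and the states $((x_1,0,0,0),\bj)$ lie in $\calS_0$, so both $\calA=\{f>c\}$ and its complement meet the irreducible class. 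For condition (ii) it then suffices that $\langle\bw,\ba(A)\rangle>0$ for every $A\in\calN_p$; on the remaining faces, whose induced chains are transient, the same $\bw$ gives positive drift as well. Writing out these five inequalities and eliminating $w_2$ through the $\{2,3\}$- and $\{1,2,3\}$-inequalities yields the lower bound $w_3>w_1/r_1$, while eliminating $w_4$ through the $\{1,4\}$- and $\{1,3,4\}$-inequalities yields the upper bound $w_3<r_2\,w_1$; the interior $N$-inequality is made compatible by the boundary ratio hypotheses, and the two bounds are jointly satisfiable precisely when $1/r_1<r_2$, that is, when $r_1 r_2>1$.

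For positive recurrence a single linear function cannot work: the face drifts $\ba(A)$ rotate around the state space, so no fixed direction has nonpositive drift throughout, which is exactly why Theorem~\ref{th:dpositive} furnishes a quadratic certificate. By that theorem it suffices to produce one symmetric positive-definite $U=(u_{i,j})$ with $\langle\ba(A),\bu_j\rangle<0$ for all $A\in\calN_p$ and $j\in A$; equivalently, by symmetry, $(U\ba(A))_j<0$ for those $j$. I would write out the fourteen scalar inequalities, observe that $u_{2,4}$ occurs only on the face $N$ and may be set to zero, and use the boundary ratio hypotheses to reconcile the four $N$-face constraints with the lower-dimensional ones. The face $\{1,3,4\}$ forces the off-diagonal entry $u_{1,3}>0$, so no block-diagonal $U$ can succeed; running the same left/right ratio bookkeeping as above, now on the entries of $U$, confines the admissible ratio $u_{1,3}/u_{1,1}$ to an interval whose endpoints coincide exactly at $r_1 r_2=1$, so that a feasible choice exists when $r_1 r_2<1$.

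The main obstacle is this positive-recurrence construction, and specifically the requirement that the matrix meeting all fourteen sign inequalities be positive definite: the off-diagonal entries forced by the faces $\{1,2,3\}$ and $\{1,3,4\}$, which share the coordinates $1$ and $3$ and are what couple the two ratios into the product $r_1 r_2$, must be kept small enough relative to the diagonal to preserve definiteness, and it is precisely the slack left by $r_1 r_2<1$, together with the boundary ratio hypotheses, that makes this possible. A secondary technical point, common to both directions, is the rigorous verification that the embedded time-change reproduces the induced mean increments $\ba(A)$ in the drift conditions of Theorem~\ref{th:Markov_transient} and Corollary~\ref{co:Foster2}.
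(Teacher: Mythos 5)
Your transience argument has a genuine gap that cannot be repaired within the framework you set up: a single linear test function $f(\bx,\bj)=\langle\bw,\bx\rangle$ with all $w_i>0$ cannot satisfy condition (ii) of Theorem \ref{th:Markov_transient}, because the set $\calA=\{f>c\}$ contains states arbitrarily deep in the interior face, where the time-averaged drift of the embedded chain is $\langle\bw,\ba(N)\rangle$, and this interior inequality is \emph{incompatible} with the four boundary ones; your claim that it ``is made compatible by the boundary ratio hypotheses'' is exactly backwards. Concrete counterexample: the non-preemptive priority network with $\lambda_1=\lambda_3=1$, $p=0$, $\mu_1=\mu_3=10$, $\mu_2=\mu_4=1.9$ satisfies the nominal condition, conditions (\ref{eq:dqN})--(\ref{eq:dq23}), both ratio hypotheses of the corollary, and $r_1r_2=(10/9)^2>1$, and up to the common factor $1/\nu$ the increment vectors are
\begin{align*}
&\ba(N)=(1,-1.9,1,-1.9),\quad \ba(\{1,2,3\})=(-9,\,8.1,\,1,\,0),\quad \ba(\{1,3,4\})=(1,\,0,\,-9,\,8.1),\cr
&\ba(\{1,4\})=(1,\,0,\,0,\,-0.9),\quad \ba(\{2,3\})=(0,\,-0.9,\,1,\,0).
\end{align*}
Normalizing $w_1=1$, your four boundary inequalities give $w_2>(9-w_3)/8.1$, $w_2<w_3/0.9$, $w_4>(9w_3-1)/8.1$, $w_4<1/0.9$, hence $w_3\in(9/10,10/9)$ (your interval $(w_1/r_1,\,r_2w_1)$, so far so good) and $w_2+w_4\ge(8+8w_3)/8.1$; but $\langle\bw,\ba(N)\rangle>0$ demands $1.9(w_2+w_4)<1+w_3$, which would force $1.9\cdot 8/8.1<1$, a contradiction. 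So your five inequalities are infeasible even though the chain is transient. This is the same rotation phenomenon you yourself invoke to rule out linear functions on the positive-recurrence side; it rules them out here too. The paper escapes it by taking $f=\max\{f_{\{1,2,3\}},f_{\{1,3,4\}}\}$ with weight vectors each carrying one large \emph{negative} entry ($1/c_4-c_0$ on $x_4$, resp.\ $1/c_2-c_0$ on $x_2$): since $a_4(N)<0$ and $a_2(N)<0$, these negative weights are precisely what make the drift of the active piece positive in the interior, while identity (\ref{eq:fanfA}) ensures each piece is tested only against the faces on which it is active.

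On the positive-recurrence side your route (a positive definite $U$ fed to Theorem \ref{th:dpositive}) is indeed the paper's, but your construction plan would not go through, because the fourteen inequalities force the off-diagonal entries to be \emph{large}, not small: $\langle\ba(\{1,4\}),\bu_1\rangle<0$ forces $u_{14}>r_{1,4}^{\{1,4\}}u_{11}$, $\langle\ba(\{2,3\}),\bu_3\rangle<0$ forces $u_{23}>r_{3,2}^{\{2,3\}}u_{33}$, the column-2 and column-3 inequalities for face $\{1,2,3\}$ force $u_{12}$ and $u_{13}$ above diagonal-scale quantities (the only negative coefficient available there is $a_1(\{1,2,3\})$), face $\{1,3,4\}$ similarly forces $u_{13}$ and $u_{34}$ from below, and the face-$N$ column inequalities then require, e.g., $|a_2(N)|u_{12}+|a_4(N)|u_{14}>a_1(N)u_{11}+a_3(N)u_{13}$. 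The paper resolves this tension at the opposite extreme from ``keep the off-diagonals small'': Proposition \ref{pr:pdefiniteU} supplies a perturbed rank-one matrix in which \emph{every} off-diagonal equals $\sqrt{u_{ii}u_{jj}(1-\varepsilon/c)}$, just below the geometric-mean ceiling, with positive definiteness verified by a determinant expansion in $\varepsilon$; the diagonals are then tied together by $u_{33}=(r_{3,2}^{\{2,3\}})^2(u_{22}-\delta)$ and $u_{44}=(r_{1,4}^{\{1,4\}})^2(u_{11}+\delta)$, and the hypothesis $r_1r_2<1$ enters only to make (\ref{eq:u11})--(\ref{eq:u22}) solvable for $u_{11},u_{22}$. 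That construction (or an equivalent one) is the real substance of the positive-recurrence proof, and it is absent from your sketch, whose guiding intuition points in the wrong direction.
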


Note that the former condition in Corollary \ref{co:stability} corresponds to condition (\ref{eq:main0cond1}) in Theorem \ref{th:main0} and the latter to condition (\ref{eq:main0cond2}).

%
%
\begin{figure}[bht]
\begin{center}
\setlength{\unitlength}{1.2mm}
\begin{picture}(70,35)(0,0)
\thinlines
\put(32,35){\makebox(0,0){\normalsize $x_1$}}
\put(40,3){\line(-1,4){7.5}}
\put(13.5,20){\makebox(0,0){\normalsize $x_2$}}
\put(40,3){\line(-3,2){24}}
\put(60,12){\makebox(0,0){\normalsize $x_3$}}
\put(40,3){\line(2,1){17}}
\put(58,28.5){\makebox(0,0){\normalsize $x_4$}}
\put(40,3){\line(2,3){16}}
\put(40,0){\makebox(0,0){\normalsize $O$}}
\thicklines
\put(36.5,29){\makebox(0,0){\normalsize $P_1$}}
\put(33.8,28){\vector(-1,-4){3.5}}
\multiput(20,16)(2,-0.5){5}{\line(1,0){1}}
\put(31,11.5){\makebox(0,0){\normalsize $P_2$}}
\put(30.5,13.8){\vector(4,-1){20.5}}
\put(52,6){\makebox(0,0){\normalsize $P_3$}}
\put(51,8.5){\vector(-1,3){4.5}}
\multiput(51,21)(-2,0.5){3}{\line(1,0){1}}
\put(47,25){\makebox(0,0){\normalsize $P_4$}}
\put(47,22){\vector(-4,1){12.5}}
\put(38.5,21.5){\makebox(0,0){\normalsize $P_5$}}
\end{picture}
\caption{A spiral path on the second vector field.}
\label{fig:spiral}
\end{center}
\end{figure}
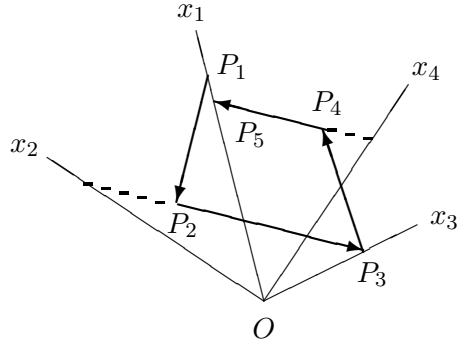

\begin{remark} \label{re:vectorfield}
Inequality $r_1 r_2<1$ corresponds to the condition that spiral paths on the second vector field introduced by Malyshev \cite{Malyshev93} reach the origin. Here we explain this point. 

For $A\subset N$, let $\bar{\calB}^A$ be a boundary of orthant $\mathbb{R}_+^4$, defined as 
\[
\bar{\calB}^A = \{(x_1,x_2,x_3,x_4)\in\mathbb{R}_+^4 : \mbox{$x_i>0$ for $i\in A$; $x_i=0$ for $i\in N\setminus A$} \}, 
\]
where $\bar{\calB}^\emptyset$ includes only the origin and $\bar{\calB}^N$ is the interior of $\mathbb{R}_+^4$. 
The second vector field generated from $\{\bY_n\}$ is constructed by assigning the mean increment vector $\ba(A)$ to each $\bx\in\bar{\calB}^A$ if the induced Markov chain $\calL^A$ is positive recurrent, where $A$ is a non-empty subset of $N$; if $\calL^A$ is transient, we consider the vector assigned to $\bx\in\bar{\calB}^A$ is undetermined. 
Let us consider a spiral path on the second vector field, depicted in Fig.~\ref{fig:spiral}. It starts from point $P_1$ on the $x_1$-axis; while $P_1$ is a point on boundary $\bar{\calB}^{\{1\}}$ and the vector on $P_1$ is undetermined since $\calL^{\{1\}}$ is transient, we suppose the path continues toward boundary $\bar{\calB}^{\{1,2,3\}}$. 
Since vector $\ba(\{1,2,3\})$ is assigned to each point on $\bar{\calB}^{\{1,2,3\}}$, the path continues further through $\bar{\calB}^{\{1,2,3\}}$ toward boundary $\bar{\calB}^{\{2,3\}}$, following the vector $\ba(\{1,2,3\})$, and reaches pint $P_2$ on $\bar{\calB}^{\{2,3\}}$. 
Then, following vector $\ba(\{2,3\})$, the path continues through $\bar{\calB}^{\{2,3\}}$ toward boundary $\bar{\calB}^{\{3\}}$ and reaches pint $P_3$ on $\bar{\calB}^{\{3\}}$. 
While the vector on $P_3$ is undetermined since $\calL^{\{3\}}$ is transient, we also suppose the path continues toward boundary $\bar{\calB}^{\{1,3,4\}}$. The path continues further through $\bar{\calB}^{\{1,3,4\}}$ toward boundary $\bar{\calB}^{\{1,4\}}$, following vector $\ba(\{1,3,4\})$, and reaches pint $P_4$ on $\bar{\calB}^{\{1,4\}}$. 
Then, following vector $\ba(\{1,4\})$, the path continues through $\bar{\calB}^{\{1,4\}}$ toward $\bar{\calB}^{\{1\}}$ and reaches pint $P_5$ on $\bar{\calB}^{\{1\}}$. 
After that, the path continues in the same way. 
We, therefore, see that if $P_5$ is closer to the origin than $P_1$, the spiral path eventually reaches the origin. Supposing $P_1=(1,0,0,0)$, we obtain, through some calculation, that $P_3=(0,0,x_3,0)$, where $x_3$ is given by
\[
x_3 = \frac{a_2(\{1,2,3\})}{a_1(\{1,2,3\})} \frac{a_3(\{2,3\})}{a_2(\{2,3\})}-\frac{a_3(\{1,2,3\})}{a_1(\{1,2,3\})} = r_1; 
\]
Supporting $P_3=(0,0,1,0)$, we also obtain $P_5=(x_1,0,0,0)$, where $x_1$ is given by
\[
x_1 = \frac{a_4(\{1,3,4\})}{a_3(\{1,3,4\})} \frac{a_1(\{1,4\})}{a_4(\{1,4\})}-\frac{a_1(\{1,3,4\})}{a_3(\{1,3,4\})} = r_2. 
\]
Hence, we see that if $r_1 r_2 <1$, the spiral path eventually reaches the origin. 
Here we should note that this is just intuitive derivation of the condition $r_1 r_2 <1$ and it is not a proof of Corollary \ref{co:stability}. 
\end{remark}

We will prove Corollary \ref{co:stability} separating it into a positive-recurrence part and transience part. Before doing it, we give the following proposition.
\begin{proposition} \label{pr:pdefiniteU}
For $i\in N=\{1,2,3,4\}$, let $u_{ii}$ be a positive real number and let $c$ be defined as 
\[
c=u_{11}\,u_{22}\,u_{33}\,u_{44}.
\]
For $\varepsilon$ such that $0\le\varepsilon\le c$, let $U(\varepsilon)=(u_{ij}(\varepsilon))$ be a 4-dimensional square matrix defined by 
\[
u_{i,j}(\varepsilon) = \left\{ \begin{array}{ll}
u_{ii}, & i=j, \cr
\sqrt{u_{ii}u_{jj}\left(1-\frac{\varepsilon}{c}\right)}, & i\ne j. \end{array}\right.
\]
Then $U(\varepsilon)$ is symmetric and there exists a positive number $\varepsilon_0$ such that, for every $\varepsilon\in(0,\varepsilon_0)$, $U(\varepsilon)$ is positive definite.
\end{proposition}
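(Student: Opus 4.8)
The plan is to reduce $U(\varepsilon)$ by a diagonal congruence to a matrix whose eigenvalues can be written down explicitly, and then read off positive definiteness directly. Symmetry requires no work: the definition gives $u_{ij}(\varepsilon) = u_{ji}(\varepsilon)$ for all $i,j$, so $U(\varepsilon)$ is symmetric for every admissible $\varepsilon$.

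For positive definiteness, I would first strip off the diagonal scaling. Put $D = \mathrm{diag}(\sqrt{u_{11}}, \sqrt{u_{22}}, \sqrt{u_{33}}, \sqrt{u_{44}})$, which is invertible because each $u_{ii}>0$, and set $s = \sqrt{1-\varepsilon/c}$. An entrywise check gives $U(\varepsilon) = D\,M(s)\,D$, where $M(s)$ is the $4\times 4$ matrix with $1$ on the diagonal and $s$ in every off-diagonal position: indeed $\sqrt{u_{ii}}\,s\,\sqrt{u_{jj}} = \sqrt{u_{ii}u_{jj}(1-\varepsilon/c)} = u_{ij}(\varepsilon)$ for $i\ne j$, and the diagonal matches trivially. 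Since $D$ is invertible, for any $\bx\ne\bzero$ we have $\bx^\top U(\varepsilon)\bx = (D\bx)^\top M(s)(D\bx)$, and $D\bx$ ranges over all nonzero vectors as $\bx$ does; hence $U(\varepsilon)$ is positive definite if and only if $M(s)$ is.

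The main step is the eigenstructure of $M(s)$. Writing $M(s) = (1-s)I + s\,\bone\bone^\top$ and using that the rank-one matrix $\bone\bone^\top$ has eigenvalue $4$ (with eigenvector $\bone$) and eigenvalue $0$ of multiplicity $3$ (on the orthogonal complement of $\bone$), the eigenvalues of $M(s)$ are $1+3s$ (simple) and $1-s$ (multiplicity $3$). For $\varepsilon\in(0,c]$ one has $s\in[0,1)$, so both $1+3s>0$ and $1-s>0$, whence $M(s)$, and therefore $U(\varepsilon)$, is positive definite. This yields the claim with $\varepsilon_0 = c$; in fact positive definiteness holds on all of $(0,c]$, and the restriction $\varepsilon>0$ is exactly what forces $s<1$ and keeps the eigenvalue $1-s$ strictly positive (at $\varepsilon=0$ one would have $s=1$ and $U(0)$ only positive semidefinite). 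There is no genuine obstacle here: the only thing to notice is the congruence reduction to $M(s)$, after which the rank-one eigenvalue computation is entirely routine.
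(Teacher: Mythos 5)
Your proof is correct, but it takes a genuinely different route from the paper. The paper verifies Sylvester's criterion directly: it computes the four leading principal minors of $U(\varepsilon)$ explicitly (e.g.\ $|U_2(\varepsilon)|=u_{11}u_{22}\,\varepsilon/c$ and similar, lengthier expressions for $|U_3(\varepsilon)|$ and $|U(\varepsilon)|$) and argues that all of them are positive for all sufficiently small $\varepsilon>0$, which yields existence of $\varepsilon_0$ without exhibiting it. You instead factor $U(\varepsilon)=D\,M(s)\,D$ with $D=\mathrm{diag}(\sqrt{u_{11}},\dots,\sqrt{u_{44}})$ and $s=\sqrt{1-\varepsilon/c}$, recognize $M(s)=(1-s)I+s\,\bone\bone^\top$ as an equicorrelation matrix, and read off its eigenvalues $1+3s$ and $1-s$ (multiplicity $3$). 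This buys several things the paper's computation does not: an explicit admissible threshold $\varepsilon_0=c$ (indeed positive definiteness on all of $(0,c]$, which is sharp, since at $\varepsilon=0$ one has $s=1$ and $U(0)$ is only positive semidefinite); a transparent explanation of \emph{why} the result holds; and an argument that extends verbatim to any dimension $d$, whereas the paper's minor-by-minor calculation is tied to $d=4$. One could even note that your eigenvalue formula recovers the paper's minors, $|U_k(\varepsilon)|=\bigl(\prod_{i\le k}u_{ii}\bigr)(1-s)^{k-1}\bigl(1+(k-1)s\bigr)$, so the two arguments are consistent. Your congruence step is also justified correctly: since $D$ is invertible, $\bx^\top U(\varepsilon)\bx=(D\bx)^\top M(s)(D\bx)$ with $D\bx$ ranging over all nonzero vectors, so positive definiteness transfers both ways. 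No gaps.
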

\begin{proof}
For $k\in\{1,2,3\}$, let $U_k(\varepsilon)$ be the $k$-th leading principal submatrix of $U(\varepsilon)$, i.e., $U_k(\varepsilon)=(u_{i,j}(\varepsilon),1\le i,j\le k)$. Through some calculation, we obtain 
\begin{align*}
&|U_1(\varepsilon)| = u_{11}, \cr
&|U_2(\varepsilon)| = u_{11}u_{22}\,\frac{\varepsilon}{c}, \cr
&|U_3(\varepsilon)| = \frac{1}{u_{44}}\left(3\varepsilon+2 c \left( \left(1-\frac{\varepsilon}{c}\right)^{\frac{3}{2}}-1\right) \right),\cr
&|U(\varepsilon)| = \frac{1}{c}\left(3\varepsilon^2+2 c^2\left(-3\left(1-\frac{\varepsilon}{c}\right)^2+4 \left(1-\frac{\varepsilon}{c}\right)^{\frac{3}{2}}-1\right)\right).
\end{align*}
From these formulae, we see that all the leading principal minors of $U(\varepsilon)$ including the determinant of $U(\varepsilon)$ are positive for sufficiently small any positive $\varepsilon$. Thus, by Sylvester's criterion (see, for example, Horn and Johnson \cite{Horn13}), $U(\varepsilon)$ is positive definite for such $\varepsilon$. 
\end{proof}

\medskip
\begin{proof}[Proof of Corollary \ref{co:stability} (positive recurrence)]
Let $U(\varepsilon)=(u_{ij}(\varepsilon))=(\bu_1(\varepsilon),\bu_2(\varepsilon),\bu_3(\varepsilon),\bu_4(\varepsilon))$ be the positive-definite 4-dimensional square matrix given in Proposition \ref{pr:pdefiniteU}, where, for $j\in N$, $\bu_j(\varepsilon)$ is the $j$-th column of $U(\varepsilon)$. 
In order to prove $\{\bY_n\}$ being positive recurrent, it suffices from Theorem \ref{th:dpositive} to demonstrate that there exist positive numbers $u_{11}$, $u_{22}$, $u_{33}$ and $u_{44}$ such that, for a sufficiently small $\varepsilon$, every $A\in\calN_p$ and every $k\in A$, we have $\langle\ba(A),\bu_k(\varepsilon)\rangle<0$. 
To this end, we set $u_{11}$ and $u_{22}$ so that they satisfy 
\begin{align}
&r_{3,2}^{\{2,3\}} \sqrt{u_{11}} - r_1 \sqrt{u_{22}}>0, \label{eq:u11} \\
&\sqrt{u_{22}}-r_2\,r_{3,2}^{\{2,3\}} \sqrt{u_{11}} >0; \label{eq:u22}
\end{align}
it is possible because of the assumption $r_1 r_2 <1$. Furthermore, we assume $u_{33}$ and $u_{44}$ to be given in terms of $u_{11}$ and $u_{22}$ by 
\begin{align}
&u_{33} = (r_{3,2}^{\{2,3\}})^2(u_{22}-\delta),\\ 
&u_{44} = (r_{1,4}^{\{1,4\}})^2(u_{11}+\delta), 
\end{align}
where $\delta$ is a small positive number and we assume $\varepsilon$ to be sufficiently smaller than $\delta$.

First we consider the case of $A=\{1,4\}$. In this case, we have 
\[
u_{14}(\varepsilon)^2-(r_{1,4}^{\{1,4\}})^2 u_{11}^2 = (r_{1,4}^{\{1,4\}})^2 u_{11} \left(\delta-\varepsilon(u_{11}+\delta)/c\right)>0,
\]
where we use the fact that $\varepsilon$ is sufficiently smaller than $\delta$. Since $a_4(A)<0$, this implies 
\[
\langle\ba(\{1,4\}),\bu_1(\varepsilon)\rangle = a_4(\{1,4\}) (-r_{1,4}^{\{1,4\}} u_{11}+u_{14}(\varepsilon)) < 0.
\]
Analogously, we obtain $\langle\ba(\{1,4\}),\bu_4(\varepsilon)\rangle<0$. 
In the case of $A=\{2,3\}$, we also obtain $\langle\ba(\{2,3\}),\bu_2(\varepsilon)\rangle<0$ and $\langle\ba(\{2,3\}),\bu_3(\varepsilon)\rangle<0$ in a way similar to that used in the case of $A=\{1,4\}$.

Next we consider the case of $A=\{1,2,3\}$. In this case, we have 
\begin{align*}
\langle\ba(\{1,2,3\}),\bu_1(\varepsilon)\rangle 
&= a_1(\{1,2,3\}) \left( u_{11}-r_{2,1}^{\{1,2,3\}} u_{12}(\varepsilon) - r_{3,1}^{\{1,2,3\}} u_{13}(\varepsilon) \right) \cr
&= a_1(\{1,2,3\}) \biggl(u_{11}-r_{2,1}^{\{1,2,3\}} \sqrt{u_{11}u_{22}\left(1-\frac{\varepsilon}{c}\right)} \cr
&\qquad\qquad\qquad - r_{3,1}^{\{1,2,3\}}  \sqrt{u_{11} \left(r_{3,2}^{\{2,3\}}\right)^{-2} (u_{22}-\delta) \left(1-\frac{\varepsilon}{c} \right)} \biggr) \cr
&= a_1(\{1,2,3\}) \sqrt{u_{11}} \left(r_{3,2}^{\{2,3\}}\right)^{-1} \left( r_{3,2}^{\{2,3\}} \sqrt{u_{11}}-r_1\sqrt{u_{22}} \right) + \mathcal{O}(\varepsilon+\delta), 
\end{align*}
where we say $f(x)=\mathcal{O}(g(x))$ as $x\to a$ if there exist positive constants $M$ and $m$ such that, for any $x\in(a-m,a+m)\setminus\{a\}$, $|f(x)|\le M |g(x)|$. Since both $\varepsilon$ and $\delta$ are sufficiently small and we have $a_1(\{1,2,3\})<0$ and inequality (\ref{eq:u11}), the expression above implies $\langle\ba(\{1,2,3\}),\bu_1(\varepsilon)\rangle<0$. 
Analogously, we obtain $\langle\ba(\{1,2,3\}),\bu_2(\varepsilon)\rangle<0$ and $\langle\ba(\{1,2,3\}),\bu_3(\varepsilon)\rangle<0$. 
In the case of $A=\{1,3,4\}$, we also obtain, for $k\in \{1,3,4\}$, $\langle\ba(\{1,3,4\}),\bu_k(\varepsilon)\rangle<0$ in a way similar to that used in the case of $A=\{1,2,3\}$, where we use inequality (\ref{eq:u22}) instead of (\ref{eq:u11}).

Lastly, we consider the case of $A=N$. In this case, we have 
\begin{align*}
\langle\ba(N),\bu_1(\varepsilon)\rangle 
&= a_4(N) \left( r_{1,4}^{\{1,4\}}-r_{1,4}^N \right) u_{11} + a_2(N) \left(r_{3,2}^{\{2,3\}}\right)^{-1} \left( r_{3,2}^{\{2,3\}}-r_{3,2}^N \right) \sqrt{u_{11}u_{22}} \cr
&\qquad + \mathcal{O}(\varepsilon+\delta), 
\end{align*}
and, by the assumption of Corollary \ref{co:stability}, this implies $\langle\ba(N),\bu_1(\varepsilon)\rangle<0$. 
Analogously, we obtain, for $k\in\{2,3,4\}$, $\langle\ba(N),\bu_k(\varepsilon)\rangle<0$, and this completes the proof. 
\end{proof}

\medskip
\begin{proof}[Proof of Corollary \ref{co:stability} (transience)] 
Transience of $\{\bY_n\}$ can be proved in a way similar to that used in the proof of Theorem 5.2 of Ozawa \cite{Ozawa12}. Hence we will explain only outline. 

In order to apply Theorem \ref{th:Markov_transient}, first we construct an embedded Markov chain of $\{\bY_n\}$, according to Ozawa \cite{Ozawa12}. 
For $A\subset N$ such that $A\ne\emptyset$, let $K_A$ be a positive integer and assume that, for $A, B\subset N$, if $|A|>|B|$ then $K_A < K_B$. We inductively define, for $A\subset N$, a subset $\calV_A\subset\calS$ as follows. 
\begin{description}
\item[Step 1] Set $k := 4$ and $\calV_\emptyset := \calS$.
\item[Step 2] For $A\subset N$ such that $|A|=k$, set $\calV_A := \calV_\emptyset \cap \{ (\bx,\bj)\in\calS : x_l\ge K_A,\ l\in A \}$. 
Set $\calV_\emptyset := \calV_\emptyset\setminus \left(\bigcup_{A\subset N,\,|A|=k} \calV_A \right) $ and $k := k-1$.
\item[Step 3] If $k>0$ then go to Step 2, otherwise we obtain $\calV_A$ for all $A\subset N$.
\end{description}

It is obvious that $\bigcup_{A\subset N} \calV_A = \calS$ and that $\calV_A\cap\calV_B=\emptyset$ for all $A, B\subset N$ such that $A\ne B$. Thus, $\{\calV_A : A\subset N\}$ is a partition of $\calS$. We also see that $\calV_\emptyset$ is finite. 
For $\by\in\calS$, define a function $\tau(\by)$ as 
\[
\tau(\by) = T_A\ \mbox{if $\by\in\calV_A$ for some $A\subset N$}, 
\]
where $T_A,\ A\subset N$, are positive integers. We assume that $T_\emptyset=1$ and that, for $A\subset N$ such that $A\ne\emptyset$, $T_A<K_A$. 
Furthermore, define a time sequence $\{t_n\}$ as $t_{n+1}=t_n+\tau(\bY_{t_n}),\, t_0=0$. The desired embedded Markov chain, denoted by $\{\tilde{\bY}_n\}=\{(\tilde{\bX}_n,\tilde{\bJ}_n)\}$, is defined as $\tilde{\bY}_n = \bY_{t_n},\,n\ge 0$. 

Next we will define a test function. Let $\delta_2$, $\delta_3$ and $\delta_4$ be sufficiently small positive numbers satisfying 
\begin{align}
&\delta_2 > r_{3,2}^{\{2,3\}} \delta_3,\quad 
\delta_3 > r_{4,3}^{\{1,3,4\}} \delta_4, 
\end{align}
and let $c_2$, $c_3$ and $c_4$ be defined as 
\begin{align}
&\frac{1}{c_2} = r_2\,r_{3,2}^{\{2,3\}} - \delta_2,\quad 
\frac{1}{c_3} = r_2 - \delta_3,\quad 
\frac{1}{c_4} = r_{1,4}^{\{1,4\}}-\delta_4. 
\end{align}
Let $c_0$ and $c_1$ be sufficiently large positive numbers and let vectors $\bw_{\{1,2,3\}}$ and $\bw_{\{1,3,4\}}$ be defined as 
\begin{align}
&\bw_{\{1,2,3\}} = \frac{1}{c_1}\left( 1,\,\frac{1}{c_2},\,\frac{1}{c_3},\,\frac{1}{c_4}-c_0 \right),\quad 
\bw_{\{1,3,4\}} = \frac{1}{c_1}\left( 1,\,\frac{1}{c_2}-c_0,\,\frac{1}{c_3},\,\frac{1}{c_4}\right).
\end{align}
Furthermore, let functions $f_{\{1,2,3\}}$ and $f_{\{1,3,4\}}$ be defined as, for $\bx\in\mathbb{R}_+^4$, 
\[
f_{\{1,2,3\}}(\bx) = \langle\bx,\bw_{\{1,2,3\}}\rangle, \quad 
f_{\{1,3,4\}}(\bx) = \langle\bx,\bw_{\{1,3,4\}}\rangle.
\]
Note that both $f_{\{1,2,3\}}(\bx)$ and $f_{\{1,3,4\}}(\bx)$ are linear functions with respect to $\bx$. 
Through some calculation, we obtain 
\begin{align}
f_{\{1,2,3\}}(\ba(\{1,2,3\}) 
&= \frac{a_1(\{1,2,3\})}{c_1} \left( 1-\frac{1}{c_2} r_{2,1}^{\{1,2,3\}}-\frac{1}{c_3} r_{3,1}^{\{1,2,3\}} \right) \cr
&= -\frac{a_1(\{1,2,3\})}{c_1} \left( (r_1 r_2-1)-r_{2,1}^{\{1,2,3\}}\delta_2-r_2\delta_3 \right)>0, 
\label{eq:f123a123}
\end{align}
where we use the facts that $a_1(\{1,2,3\})<0$, $r_1 r_2>1$ and both $\delta_2$ and $\delta_3$ are sufficiently small positive numbers. 
Analogously, we obtain 
\begin{align}
&f_{\{1,2,3\}}(\ba(\{2,3\}) 
= -\frac{a_2(\{2,3\})}{c_1} \left( \delta_2-r_{3,2}^{\{2,3\}} \delta_3 \right)> 0, \\
&f_{\{1,3,4\}}(\ba(\{1,3,4\}) 
= -\frac{a_3(\{1,3,4\})}{c_1} \left( \delta_3-r_{4,3}^{\{1,3,4\}} \delta_4 \right)> 0, \\
&f_{\{1,3,4\}}(\ba(\{1,4\}) 
= -\frac{a_4(\{1,4\})}{c_1} \delta_4 > 0, 
\end{align}
where we use the assumptions for $\delta_2$, $\delta_3$ and $\delta_4$ and the facts that all $a_3(\{2,3\})$, $a_2(\{1,3,4\})$ and $a_4(\{1,4\})$ are negative. 
Furthermore, since $c_0$ is sufficiently large positive number and both $a_4(N)$ and $a_2(N)$ are negative, we have 
\begin{align}
&f_{\{1,2,3\}}(\ba(N)) 
= \frac{1}{c_1} \left( a_1(N)+\frac{1}{c_2} a_2(N)+\frac{1}{c_3} a_3(N)+\left(\frac{1}{c_4}-c_0\right) a_4(N) \right)>0, \\
&f_{\{1,3,4\}}(\ba(N)) 
= \frac{1}{c_1} \left( a_1(N)+\left(\frac{1}{c_2}-c_0\right) a_2(N)+\frac{1}{c_3} a_3(N)+\frac{1}{c_4} a_4(N) \right)>0. \label{eq:f134aN}
\end{align}
Using $f_{\{1,2,3\}}$ and $f_{\{1,3,4\}}$, we define a test function $f$ as, for $\by=(\bx,\bj)\in\calS$, 
\[
f(\by)=f(\bx,\bj)=\max\!\left\{f_{\{1,2,3\}}(\bx),\,f_{\{1,3,4\}}(\bx)\right\}, 
\]
and a subset $\calA$ of $\calS$ as 
\[
\calA = \{ \by\in\calS: f(\by)>1 \}.
\]
The hyperplane $f_{\{1,2,3\}}(\bx)=1$ intersects the $x_1$-axis at $(c_1,0,0,0)$, the $x_2$-axis at $(0,c_1 c_2,0,0)$, the $x_3$-axis at $(0,0,c_1 c_3,0)$ and the $x_4$-axis at $(0,0,0,-\frac{c_1 c_4}{c_0 c_4-1})$; the hyperplane $f_{\{1,3,4\}}(\bx)=1$ intersects the $x_1$-axis at $(c_1,0,0,0)$, the $x_2$-axis at $(0,-\frac{c_1 c_2}{c_0 c_2-1},0,0)$, the $x_3$-axis at $(0,0,c_1 c_3,0)$ and the $x_4$-axis at $(0,0,0,c_1 c_4)$. 
Therefore, since $c_0$ and $c_1$ are assumed to be positive and sufficiently large, it can be seen from the definition of the partition $\{\calV_A: A\subset N\}$ that we have $\calA\subset\calS\setminus\calV_\emptyset$. 
As mentioned in Section \ref{sec:modelandresults}, we have assumed that, for some $\bj\in S_0$, $((0,0,0,0),\bj)\in\calS_0$; this implies that, for every $x_1\in\mathbb{Z}_+$ and for some $\bj\in S_0$, $((x_1,0,0,0),\bj)\in\calS_0$, where $\calS_0$ is the irreducible class of $\{\bY_n\}$. 
Thus, it is obvious that $\calA\cap\calS_0\ne\emptyset$ and $\calA^C\cap\calS_0\ne\emptyset$. 

We derive several properties of the test function $f$. Define a function $\sigma$ as, for $\bx=(x_1,x_2,x_3,x_4)\in\mathbb{R}^4$, 
\[
\sigma(\bx) = 
\left\{ \begin{array}{ll}
\{1,2,3\}, & x_2\ge x_4, \cr
\{1,3,4\}, & x_4> x_2.
\end{array} \right.
\]
Since we have 
\begin{align*}
&f_{\{1,2,3\}}(\bx)-f_{\{1,3,4\}}(\bx) = \frac{c_0}{c_1}(x_2-x_4), 
\end{align*}
$f(\by)$ is represented as, for $\by=(\bx,\bj)\in\calS$, 
\[
f(\by)=f(\bx,\bj)=f_{\sigma(\bx)}(\bx).
\]
If $\by=(\bx,\bj)\in\calV_{\{1,2,3\}}$, where $\bx=(x_1,x_2,x_3,x_4)\in\mathbb{Z}_+^4$, then we see, from the definition of $\calV_{\{1,2,3\}}$, that $x_1,x_2,x_3\ge K_{\{1,2,3\}}>K_N$ and $x_4\le K_N$, and this leads us to $x_2>x_4$; If $\by=(\bx,\bj)\in\calV_{\{2,3\}}$, then we see that $x_1<K_{\{1,2,3\}}$, $x_2,x_3\ge K_{\{2,3\}}>K_{\{2,3,4\}}$ and $x_4\le K_{\{2,3,4\}}$, and this also leads us to $x_2>x_4$. 
Analogously, for $\by=(\bx,\bj)\in\calV_{\{1,3,4\}}$ and $\by=(\bx,\bj)\in\calV_{\{1,4\}}$, we have $x_4>x_2$. Hence we obtain 
\begin{align}
f(\by) = f((\bx,\bj)) = \left\{ \begin{array}{ll} 
f_{\{1,2,3\}}(\bx) & \mbox{if $\by\in\calV_{\{1,2,3\}}$ or $\by\in\calV_{\{2,3\}}$}, \cr
f_{\{1,3,4\}}(\bx) & \mbox{if $\by\in\calV_{\{1,3,4\}}$ or $\by\in\calV_{\{1,4\}}$}.
\end{array} \right.
\label{eq:fanfA}
\end{align}
Furthermore, $f$ is a kind of sublinear function, since it satisfies, for $\by=(\bx,\bj),\by'=(\bx',\bj')\in\calS$, 
\begin{align}
f(\by')-f(\by) 
&= f_{\sigma(\bx')}(\bx')-f_{\sigma(\bx)}(\bx) \cr
&\ge f_{\sigma(\bx)}(\bx')-f_{\sigma(\bx)}(\bx) \cr
&= f_{\sigma(\bx)}(\bx'-\bx). 
\end{align}
Hence, letting $\phi(\by)$ be defined as  
\[
\phi(\by) 
= E(f(\tilde{\bY}_1)-f(\tilde{\bY}_0)\,|\,\tilde{\bY}_0=\by),\ \by\in\calS, 
\]
we have, for $\by=(\bx,\bj)\in\calS$,  
\begin{align*}
\phi(\by) 
&\ge E\Big( f_{\sigma(\bx)}\big(\tilde{\bX}_1-\tilde{\bX}_0\big)\,\big|\,\tilde{\bY}_0=\by \Big) 
= E\bigg(  f_{\sigma(\bx)}\Big(\sum_{n=1}^{\tau(\bx)} (\bX_n-\bX_{n-1}\Big)\,\Big|\,\bY_0=\by \bigg). 
\end{align*}
Therefore, considering inequalities (\ref{eq:f123a123}) to (\ref{eq:f134aN}) and expression (\ref{eq:fanfA}) and appropriately setting parameters $K_A$ and $T_A$ for $A\subset N,\,A\ne\emptyset$, we can see, in a way similar to that used for the proof of Theorem 5.2 of Ozawa \cite{Ozawa12}, that there exists a positive number $\varepsilon$ such that $\phi(\by)\ge\varepsilon$ for any $\by\in\calA$. 
Since $\{\bX_n\}$ is skip free in all coordinates, it is obvious that if $|f(\by_1)-f(\by_0)|>b$ then we have $P(\bY_1=\by_1\,|\,\bY_0=\by_0)=0$, where
\[
b = \frac{2}{c_1}\left(1+\frac{1}{c_2}+\frac{1}{c_3}+\frac{1}{c_4}+c_0\right).
\]
Hence, by Theorem \ref{th:Markov_transient}, the semi-irreducible Markov chain $\{\bY_n\}$ is transient.
\end{proof}

%
%
%
\section{Examples} \label{sec:examples}

We consider other two examples of service process: one is a service process with a well-known preemptive-resume priority service and the other that with a $(1,K)$-limited service. Assuming the service times of customers in each class are subject to a PH-distribution, we obtain the stability region of the two-station network depicted in Fig.~\ref{fig:twostation} with one of those service disciplines.

\subsection{Preemptive-resume priority service}

Consider the two-station network in which class-4 customers (class-2 customers) have preemptive-resume priority over class-1 customers (resp.\ class-3 customers). 
For $i\in N$, we assume the service times of class-$i$ customers are subject to a PH-distribution with the same representation as that of the non-preemptive service model explained in Section \ref{sec:modelandresults}; the mean service time of class-$i$ customers is given by $1/\mu_i$. 
The phase sets $S^s_{10}$, $S^s_{11}$, $S^s_{20}$ and $S^s_{23}$ are the same as those of the non-preemptive service model. On the other hand, in order to keep the phase of an interrupted service for a low-priority customer, $S^s_{14}$ and $S^s_{22}$ are given as $S^s_{14}=\{ s^s_1+2, s^s_1+3, ..., s^s_1+s^s_1 s^s_4+1 \}$ and $S^s_{22}=\{ s^s_3+2, s^s_3+3, ..., s^s_3+s^s_3 s^s_2+1 \}$. 
The representation of the MSP in station~1 is given in block form as follows (we omit several zeros in describing matrices). 
\[
\bar{T}_1^{00} = \begin{pmatrix} 
0 & & & &\cr 
\bone & -I & & & \cr
\bone & & -I & & \cr
\vdots & & & \ddots & \cr
\bone & & & & -I  \end{pmatrix},\quad 
\bar{T}_1^{+0} = \begin{pmatrix} 
-1 & \bbeta_1 & & &\cr 
 & \bar{H}_1 & & & \cr
 & \bone \be_1^\top & -I & & \cr
 & \vdots & & \ddots & \cr
 & \bone \be_{s^s_1}^\top & & & -I  \end{pmatrix}, 
\]
\[
\bar{T}_1^{1^*0} = \begin{pmatrix} 
0 & & & &\cr 
\bh_1 & O & & & \cr
 & & O & & \cr
 & & & \ddots & \cr
 & & & & O  \end{pmatrix},\quad 
\bar{T}_1^{2^*0} = \begin{pmatrix} 
0 & & & &\cr 
 & \bh_1 \bbeta_1 & & & \cr
 & & O & & \cr
 & & & \ddots & \cr
 & & & & O  \end{pmatrix}, 
\]
\[
\bar{T}_1^{0+} = \begin{pmatrix} 
-1 & & [\bbeta_1]_1 \bbeta_4 & \cdots & [\bbeta_1]_{s^s_1} \bbeta_4 \cr 
 & -I & [\bbeta_1]_1 \bone \bbeta_4 & \cdots & [\bbeta_1]_{s^s_1} \bone \bbeta_4 \cr
 & & \bar{H}_4 & & \cr
 & & & \ddots & \cr
 & & & & \bar{H}_4  \end{pmatrix},\quad 
\bar{T}_1^{01^*} = \begin{pmatrix} 
0 & & & &\cr 
 & O & & & \cr
\bh_4 & & O & & \cr
\vdots & & & \ddots & \cr
\bh_4 & & & & O  \end{pmatrix}, 
\]
\[
\bar{T}_1^{02^*} = \begin{pmatrix} 
0 & & & &\cr 
 & O & & & \cr
 & & \bh_4 \bbeta_4 & & \cr
 & & & \ddots & \cr
 & & & & \bh_4 \bbeta_4  \end{pmatrix},\quad 
\bar{T}_1^{++} = \begin{pmatrix} 
-1 & & [\bbeta_1]_1 \bbeta_4 & \cdots & [\bbeta_1]_{s^s_1} \bbeta_4 \cr 
 & -I & [\bbeta_1]_1 \bone \bbeta_4 & \cdots & [\bbeta_1]_{s^s_1} \bone \bbeta_4 \cr
 & & \bar{H}_4 & & \cr
 & & & \ddots & \cr
 & & & & \bar{H}_4  \end{pmatrix},
\]
\[
\bar{T}_1^{1^*+} = \bar{T}_1^{2^*+} = \begin{pmatrix} 
0 & & & &\cr 
 & O & & & \cr
 & & O & & \cr
 & & & \ddots &  \cr
 & & & & O  \end{pmatrix},\quad 
\bar{T}_1^{+1^*} = \begin{pmatrix} 
0 & & & &\cr 
 & O & & & \cr
 & \bh_4 \be_1^\top & O & & \cr
 & \vdots & & \ddots &  \cr
 & \bh_4 \be_{s^s_1}^\top & & & O  \end{pmatrix},
\]
\[
\bar{T}_1^{+2^*} = \begin{pmatrix} 
0 & & & &\cr 
 & O & & & \cr
 & & \bh_4 \bbeta_4 & & \cr
 & & & \ddots & \cr
 & & & & \bh_4 \bbeta_4 \end{pmatrix},\quad 
U_1^{0^*0} = \begin{pmatrix} 
0 & \bbeta_1 & & &\cr 
 & \bone \bbeta_1 & & & \cr
 & \bone \bbeta_1 & O & & \cr
 & \vdots & & \ddots & \cr
 & \bone \bbeta_1 & & & O \end{pmatrix},\quad 
\]
\[
U_1^{00^*} = \begin{pmatrix} 
0 & & [\bbeta_1]_1 \bbeta_4 & \cdots & [\bbeta_1]_{s^s_1} \bbeta_4 \cr 
 & O & [\bbeta_1]_1 \bone \bbeta_4 & \cdots & [\bbeta_1]_{s^s_1} \bone \bbeta_4 \cr
 & & [\bbeta_1]_1 \bone \bbeta_4 & \cdots & [\bbeta_1]_{s^s_1} \bone \bbeta_4 \cr
 & & \vdots & & \vdots \cr
 & & [\bbeta_1]_1 \bone \bbeta_4 & \cdots & [\bbeta_1]_{s^s_1} \bone \bbeta_4 \end{pmatrix},\quad 
U_1^{+0^*} = \begin{pmatrix} 
0 & & [\bbeta_1]_1 \bbeta_4 & \cdots & [\bbeta_1]_{s^s_1} \bbeta_4 \cr 
 & O & \be_1 \bbeta_4 & \cdots & \be_{s^s_1} \bbeta_4 \cr
 & & I & \cr
 & & & \ddots & \cr
 & & & & I \end{pmatrix}, 
\]
where $\be_i$ is a column vector whose $i$-th element is one and whose other elements are zero, i.e., the $i$-th unit vector; the dimension of $\be_i$ is determined in context. 
$U_1^{+^*0}$, $U_1^{0^*+}$, $U_1^{0+^*}$, $U_1^{+^*+}$ and $U_1^{++^*}$ become identity matrices. The representation of the MSP in station~2 are analogously given. 
In the Markov chain $\{\bY(t)\}$ arising from the two-station network with a preemptive-resume priority service, the state $((0,0,0,0),(1,1,1,1))$ is accessible from all other state; hence the Markov chain $\{\bY(t)\}$ is semi-irreducible. 

Assuming the nominal condition, we consider the case where $\mu_1>\mu_2$ and $\mu_3>\mu_4$; in this case, the nominal condition is not sufficient for the model to be stable. 
For $A\in\calN_p=\{N,\{1,2,3\},\{1,3,4\},\{1,4\},\{2,3\}\}$ and for $i\in A$, $\Delta q_i^A$ is given in a manner similar to that used in deriving those for the two-station network with a non-preemptive priority service, as follows.
\begin{align*}
& \Delta q_1^N=\lambda_1>0,\quad \Delta q_2^N=-\mu_2<0,\quad \Delta q_3^N=\lambda_3+p \mu_2>0,\quad \Delta q_4^N=-\mu_4<0, \cr
& \Delta q_1^{\{1,2,3\}}=\lambda_1-\mu_1<0,\quad \Delta q_2^{\{1,2,3\}}=\mu_1-\mu_2>0,\quad \Delta q_3^{\{1,2,3\}}=\lambda_3+p \mu_2>0, \cr
& \Delta q_1^{\{1,3,4\}}=\lambda_1>0,\quad \Delta q_3^{\{1,3,4\}}=\lambda_3-\mu_3<0,\quad \Delta q_4^{\{1,3,4\}}=\mu_3-\mu_4>0, \cr
& \Delta q_1^{\{1,4\}}=\lambda_1>0,\quad \Delta q_4^{\{1,4\}}=\lambda_3-\mu_4<0, \cr
& \Delta q_2^{\{2,3\}}=\lambda_1-\mu_2<0,\quad \Delta q_3^{\{2,3\}}=\lambda_3+p \mu_2>0. 
\end{align*}
These $\Delta q_i^A,\,A\in\calN_p,\,i\in A$, are the same as those of the two-station network with a non-preemptive priority service and we have 
\[
r_1 = \frac{\lambda_3+p \mu_2}{\mu_2-\lambda_1},\quad 
r_2 = \frac{\lambda_1}{\mu_4-\lambda_3}.  
\]
Since inequality $r_1 r_2<1$ is equivalent to $\rho_2+\rho_3<1$, we see that the two-station network with a preemptive-resume priority service is stable ($\{\bY(t)\}$ is positive recurrent) if $\rho_2+\rho_3<1$ and it is unstable ($\{\bY(t)\}$ is transient) if $\rho_2+\rho_3>1$; this result is coincident with the existing results (see, for example, Dai et al.\ \cite{Dai04} and references there).

\subsection{$(1,K)$-limited service}

We consider the two-station network in which customers in Q$_1$ (Q$_3$) are served according to a 1-limited service and those in Q$_4$ (resp.\ Q$_2$) according to a $K$-limited service, which means that the server in station 1 (resp.\ station 2) alternatively visits Q$_1$ and Q$_4$ (resp.\ Q$_3$ and Q$_2$), serves one customer upon a visit to Q$_1$ (resp.\ Q$_3$), and continuously serves customers upon a visit to Q$_4$ (resp.\ Q$_2$) until it completes serving just $K$ customers or Q$_4$ (resp.\ Q$_2$) becomes empty. 
We assume that switchover times of the servers are negligible. We call this service discipline a $(1,K)$-limited service. 
The $(1,K)$-limited service is equivalent to a non-preemptive service when the value of $K$ is unbounded (i.e., $K=\infty$). Using a $(1,K)$-limited service, we can control relative levels of priority between customers in different classes by varying the value of $K$.
For $i\in N$, we assume the service times of class-$i$ customers are subject to a PH-distribution with the same representation as that of the non-preemptive service model explained in Section \ref{sec:modelandresults}. 
The phase sets $S^s_{10}$, $S^s_{11}$, $S^s_{20}$ and $S^s_{23}$ are the same as those of the non-preemptive service model. On the other hand, in order to represent how many class-4 customers (class-2 customers) have continuously been served in Q$_4$ (resp.\ Q$_2$), $S^s_{14}$ and $S^s_{22}$ are given as $S^s_{14}=\{ s^s_1+2, s^s_1+3, ..., s^s_1+K s^s_4+1 \}$ and $S^s_{22}=\{ s^s_3+2, s^s_3+3, ..., s^s_3+K s^s_2+1 \}$. 
A representation of the MSP in station~1 is given in block form as follows (we omit several zeros in describing matrices). 
\[
\bar{T}_1^{00} = \begin{pmatrix} 
0 & & & &\cr 
\bone & -I & & & \cr
\bone & & -I & & \cr
\vdots & & & \ddots & \cr
\bone & & & & -I  \end{pmatrix},\quad 
\bar{T}_1^{+0} = \begin{pmatrix} 
-1 & \bbeta_1 & & &\cr 
 & \bar{H}_1 & & & \cr
 & \bone \bbeta_1 & -I & & \cr
 & \vdots & & \ddots & \cr
 & \bone \bbeta_1 & & & -I  \end{pmatrix}, 
\]
\[
\bar{T}_1^{1^*0} = \begin{pmatrix} 
0 & & & &\cr 
\bh_1 & O & & & \cr
 & & O & & \cr
 & & & \ddots & \cr
 & & & & O  \end{pmatrix},\quad 
\bar{T}_1^{2^*0} = \begin{pmatrix} 
0 & & & &\cr 
 & \bh_1 \bbeta_1 & & & \cr
 & & O & & \cr
 & & & \ddots & \cr
 & & & & O  \end{pmatrix}, 
\]
\[
\bar{T}_1^{0+} = \begin{pmatrix} 
-1 & & \bbeta_4 & &\cr 
 & -I & \bone \bbeta_4 & & \cr
 & & \bar{H}_4 & & \cr
 & & & \ddots & \cr
 & & & & \bar{H}_4  \end{pmatrix},\quad 
\bar{T}_1^{01^*} = \begin{pmatrix} 
0 & & & &\cr 
 & O & & & \cr
\bh_4 & & O & & \cr
\vdots & & & \ddots & \cr
\bh_4 & & & & O  \end{pmatrix}, 
\]
\[
\bar{T}_1^{02^*} = \begin{pmatrix} 
0 & & & &\cr 
 & O & & & \cr
 & & O & \bh_4 \bbeta_4 & \cr
 & & & \ddots & \ddots \cr
 & & \bh_4 \bbeta_4 & & O  \end{pmatrix},\quad 
\bar{T}_1^{++} = \begin{pmatrix} 
-1 & & \bbeta_4 & &\cr 
 & \bar{H}_1 & & & \cr
 & & \bar{H}_4 & & \cr
 & & & \ddots & \cr
 & & & & \bar{H}_4  \end{pmatrix},
\]
\[
\bar{T}_1^{1^*+} = \bar{T}_1^{2^*+} = \begin{pmatrix} 
0 & & & &\cr 
 & O & \bh_1 \bbeta_4 & & \cr
 & & O & & \cr
 & & & \ddots &  \cr
 & & & & O  \end{pmatrix},\quad 
\bar{T}_1^{+1^*} = \begin{pmatrix} 
0 & & & &\cr 
 & O & & & \cr
 & \bh_4 \bbeta_1 & O & & \cr
 & \vdots & & \ddots &  \cr
 & \bh_4 \bbeta_1 & & & O  \end{pmatrix},
\]
\[
\bar{T}_1^{+2^*} = \begin{pmatrix} 
0 & & & &\cr 
 & O & & & \cr
 & & O & \bh_4 \bbeta_4 & \cr
 & & & \ddots & \ddots  \cr
 & \bh_4 \bbeta_1 & & & O  \end{pmatrix},\quad 
U_1^{0^*0} = \begin{pmatrix} 
0 & \bbeta_1 & & &\cr 
 & \bone \bbeta_1 & & & \cr
 & \bone \bbeta_1 & O & & \cr
 & \vdots & & \ddots & \cr
 & \bone \bbeta_1 & & & O \end{pmatrix},\quad 
\]
\[
U_1^{00^*} = \begin{pmatrix} 
0 & & \bbeta_4 & & \cr 
 & O & \bone \bbeta_4 & & \cr
 & & \bone \bbeta_4 & & \cr
 & & \vdots & \ddots & \cr
 & & \bone \bbeta_4 & & O \end{pmatrix}. 
\]
$U_1^{+^*0}$, $U_1^{+0^*}$, $U_1^{0^*+}$, $U_1^{0+^*}$, $U_1^{+^*+}$ and $U_1^{++^*}$ become identity matrices. The representation of the MSP in station~2 are analogously given. 
In the Markov chain $\{\bY(t)\}$ arising from the two-station network with a $(1,K)$-limited service, the state $((0,0,0,0),(1,1,1,1))$ is accessible from all other state; hence the Markov chain $\{\bY(t)\}$ is semi-irreducible. 

For simplicity, we assume that model parameters are symmetric, which means that $\lambda_1=\lambda_3$, $\mu_1=\mu_3$, $\mu_2=\mu_4$ and $p=0$, and examine how the stability region of the two-station network is affected by the value of parameter $K$. 
We also assume the nominal condition and condition $\mu_1>\mu_2$. 
Furthermore, we assume parameter $K$ satisfies the following condition (the reason why this condition is necessary will become clear below):
\begin{align}
K>K^*=\max\!\left\{1,\,\frac{1-\rho_1}{\rho_2},\,\frac{\rho_1}{1-\rho_2} \right\}, 
\label{eq:Kcond}
\end{align}
where $\rho_1=\lambda_1/\mu_1$ and $\rho_2=\lambda_1/\mu_2$. Under this condition, for $A\in\calN_p$ and for $i\in A$, $\Delta q_i^A$ is given as follows. 
\begin{itemize}
\item $\Delta q_i^N,\,i\in N$. 
Consider the case where all the queues are saturated by customers. In station 1, after some finite time, the server serves one customer in Q$_1$ and then serves $K$ customers in Q$_4$; after that, the server repeats it. Thus, the output rates of Q$_1$ and Q$_4$ are given by 
\[
\bar{\mu}_1=\frac{1}{1/\mu_1+K/\mu_2},\quad 
\bar{\mu}_4=\frac{K}{1/\mu_1+K/\mu_2}. 
\]
Since the model is symmetric, we have $\bar{\mu}_2=\bar{\mu}_4$ and $\bar{\mu}_3=\bar{\mu}_1$. 
By expression (\ref{eq:inputrate}), we, therefore, obtain 
\begin{align*}
&\Delta q_1^N = \Delta q_3^N = \lambda_1-\frac{1}{1/\mu_1+K/\mu_2} = \frac{\rho_1+K\rho_2-1}{1/\mu_1+K/\mu_2}>0,\cr 
&\Delta q_2^N = \Delta q_4^N = \frac{1}{1/\mu_1+K/\mu_2}-\frac{K}{1/\mu_1+K/\mu_2}= -\frac{K-1}{1/\mu_1+K/\mu_2}<0, 
\end{align*}
where we use inequalities $K>(1-\rho_1)/\rho_2$ and $K>1$, which come from condition (\ref{eq:Kcond}). 
\item $\Delta q_i^{\{1,2,3\}},\,i\in {\{1,2,3\}}$, and $\Delta q_i^{\{1,3,4\}},\,i\in {\{1,3,4\}}$.  
Consider the case where queues Q$_1$, Q$_2$ and Q$_3$ are saturated by customers. In station 2, after some finite time, the server serves $K$ customers in Q$_2$ and then serves one customer in Q$_3$; after that, the server repeats it. Thus, the output rates of Q$_2$ and Q$_3$ are given by 
\[
\bar{\mu}_2=\frac{K}{1/\mu_1+K/\mu_2},\quad 
\bar{\mu}_3=\frac{1}{1/\mu_1+K/\mu_2}. 
\]
In station 1, we automatically obtain $\Delta q_4^N=0$ and this implies $\bar{\mu}_4=\bar{\mu}_3$. Hence, we obtain 
\[
\bar{\mu}_1 = \left( 1-\bar{\mu}_3/\mu_2 \right) \mu_1 = \frac{1+(K-1)\mu_1/\mu_2}{1/\mu_1+K/\mu_2}. 
\]
As a result, we obtain 
\begin{align*}
&\Delta q_1^{\{1,2,3\}} = \lambda_1 - \frac{1+(K-1)\mu_1/\mu_2}{1/\mu_1+K/\mu_2} = \frac{(K-1)(\rho_2-\mu_1/\mu_2)+\rho_1+\rho_2-1}{1/\mu_1+K/\mu_2}<0,\cr
&\Delta q_2^{\{1,2,3\}} = \frac{1+(K-1)\mu_1/\mu_2}{1/\mu_1+K/\mu_2} - \frac{K}{1/\mu_1+K/\mu_2} = \frac{(K-1)(\mu_1/\mu_2-1)}{1/\mu_1+K/\mu_2}>0,\cr
&\Delta q_3^{\{1,2,3\}} = \lambda_1 - \frac{1}{1/\mu_1+K/\mu_2} = \frac{\rho_1+K\rho_2-1}{1/\mu_1+K/\mu_2} > 0, 
\end{align*}
where we use the nominal condition, $\rho_1+\rho_2<1$, condition $\mu_1>\mu_2$ and inequality $K>(1-\rho_1)/\rho_2$, which comes from condition (\ref{eq:Kcond}). 
Since the model is symmetric, we obtain 
\begin{align*}
&\Delta q_1^{\{1,3,4\}} = \Delta q_3^{\{1,2,3\}}>0,\quad 
\Delta q_3^{\{1,3,4\}} = \Delta q_1^{\{1,2,3\}}<0,\quad
\Delta q_4^{\{1,3,4\}} = \Delta q_2^{\{1,2,3\}}>0.
\end{align*}
\item $\Delta q_i^{\{1,4\}},\,i\in {\{1,4\}}$, and $\Delta q_i^{\{2,3\}},\,i\in {\{2,3\}}$.  
Consider the case where queues Q$_1$ and Q$_4$ are saturated by customers. In station 1, after some finite time, the server serves one customer in Q$_1$ and then serves $K$ customers in Q$_4$; after that, the server repeats it. Thus, the output rates of Q$_1$ and Q$_4$ are given by 
\[
\bar{\mu}_1=\frac{1}{1/\mu_1+K/\mu_2},\quad 
\bar{\mu}_4=\frac{K}{1/\mu_1+K/\mu_2}. 
\]
In station 2, we automatically obtain $\Delta q_3^{\{1,4\}}=0$ and this implies $\bar{\mu}_3=\lambda_1$. Hence, we have 
\begin{align*}
&\Delta q_1^{\{1,4\}} = \lambda_1 - \frac{1}{1/\mu_1+K/\mu_2} = \frac{\rho_1+K\rho_2-1}{1/\mu_1+K/\mu_2} > 0,\cr
&\Delta q_4^{\{1,4\}} = \lambda_1 - \frac{K}{1/\mu_1+K/\mu_2} = \frac{\rho_1-K(1-\rho_2)}{1/\mu_1+K/\mu_2} < 0,
\end{align*}
where we use inequalities $K>(1-\rho_1)/\rho_2$ and $K>\rho_1/(1-\rho_2)$, which come from condition (\ref{eq:Kcond}). 
Since the model is symmetric, we obtain 
\begin{align*}
&\Delta q_2^{\{2,3\}} = \Delta q_4^{\{1,4\}}<0,\quad 
\Delta q_3^{\{2,3\}} = \Delta q_1^{\{1,4\}}>0.
\end{align*}
\end{itemize}

From the argument above, we see that, for $A\in\calN_P$ and for $i\in A$, $\Delta q_i^A$ satisfies the assumption of Theorem \ref{th:main0}; $r_1$ and $r_2$ in Theorem \ref{th:main0} are given by 
\[
r_1 = r_2 = \frac{\rho_1+K\rho_2-1}{-\rho_1+K(1-\rho_2)}, 
\]
and inequality $r_1 r_2<1$ is equivalent to 
\[
\left(\frac{\rho_1+K\rho_2}{1+K}\right)<\frac{1}{2}. 
\]
Thus, by Theorem \ref{th:main0}, if $\rho_2<1/2$, then $\{\bY(t)\}$ is positive recurrent for all $K>K^*$; if $\rho_2>1/2$, then it is positive recurrent for $K$ such that $K^*<K<\frac{1-2 \rho_1}{2 \rho_2-1}$ and transient for $K$ such that $K>\max\!\left\{K^*,\,\frac{1-2\rho_1}{2\rho_2-1} \right\}$.
The condition $\rho_2<1/2$ corresponds to $\rho_2+\rho_4<1$ in an asymmetric parameter case. Hence, we see that even though a two-station network with a non-preemptive priority service is unstable (transient), the corresponding two-station network with a $(1,K)$-limited service may be stable (positive recurrent) for some value of $K$. 
For example, when the parameters are set as $\lambda_1=1$, $\mu_1=5$ and $\mu_2=9/5$, we have $\rho_1+\rho_2=34/45<1$ and $\rho_2=5/9>1/2$. In this case, $\{\bY(t)\}$ is positive recurrent if $2\le K\le 5$ and it is transient if $K\ge 6$.

%
%
%

%
%
\appendix

%
%
\section{The infinitesimal generator of $\{\bY(t)\}$} \label{sec:qmatrix}

The infinitesimal generator (Q-matrix) of the Markov chain $\{\bY(t)\}$, $Q=(Q(\bx,\by),\,\bx,\by\in\mathbb{Z}_+^4)$, has the following non-zero blocks, where we denote by $\otimes$ the Kronecker product operation and by $\oplus$ the Kronecker sum operation (see, Bellman \cite{Bellman97}); for example, we have 
\[
A_1\oplus A_2\oplus A_3\oplus A_4 = 
A_1 \otimes I \otimes I \otimes I + I \otimes A_2 \otimes I \otimes I
+ I \otimes I \otimes A_3 \otimes I + I \otimes I \otimes I \otimes A_4.
\] 

{\it In the case where an exogenous customer arrives at Q$_1$:}
\begin{align*}
&Q((0,x_2,x_3,0),(1,x_2,x_3,0)) = \bar{D}_1 \otimes I \otimes U_1^{0^*0} \otimes I,\ x_2,x_3\ge 0, \cr
&Q((0,x_2,x_3,x_4),(1,x_2,x_3,x_4)) = \bar{D}_1 \otimes I \otimes U_1^{0^*+} \otimes I,\ x_4\ge 1,\ x_2,x_3\ge 0, \cr
&Q((x_1,x_2,x_3,0),(x_1+1,x_2,x_3,0)) = \bar{D}_1 \otimes I \otimes U_1^{+^*0} \otimes I,\ x_1\ge 1,\ x_2,x_3\ge 0, \cr
&Q((x_1,x_2,x_3,x_4),(x_1+1,x_2,x_3,x_4)) = \bar{D}_1 \otimes I \otimes U_1^{+^*+} \otimes I,\ x_1,x_4\ge 1,\ x_2,x_3\ge 0,
\end{align*}

{\it In the case where an exogenous customer arrives at Q$_3$:}
\begin{align*}
&Q((x_1,0,0,x_4),(x_1,0,1,x_4)) = I \otimes \bar{D}_3 \otimes I \otimes U_2^{0^*0},\ x_1,x_4\ge 0, \cr
&Q((x_1,x_2,0,x_4),(x_1,x_2,1,x_4)) = I \otimes \bar{D}_3 \otimes I \otimes U_2^{0^*+},\ x_1,x_4\ge 0,\ x_2\ge 1, \cr
&Q((x_1,0,x_3,x_4),(x_1,0,x_3+1,x_4)) = I \otimes \bar{D}_3 \otimes I \otimes U_2^{+^*0},\ x_1,x_4\ge 0,\ x_3\ge 1, \cr
&Q((x_1,x_2,x_3,x_4),(x_1,x_2,x_3+1,x_4)) = I \otimes \bar{D}_3 \otimes I \otimes U_2^{+^*+},\ x_1,x_4\ge 0,\ x_2,x_3\ge 1. 
\end{align*}

{\it In the case where the service of a customer in Q$_1$ is completed:}
\begin{align*}
&Q((1,0,0,0),(0,1,0,0)) = I \otimes I \otimes \bar{T}_1^{1^*0} \otimes U_2^{00^*}, \cr
&Q((1,0,x_3,0),(0,1,x_3,0)) = I \otimes I \otimes \bar{T}_1^{1^*0} \otimes U_2^{+0^*},\ x_3\ge 1, \cr
&Q((1,x_2,0,0),(0,x_2+1,0,0)) = I \otimes I \otimes \bar{T}_1^{1^*0} \otimes U_2^{0+^*},\ x_2\ge 1, \cr
&Q((1,x_2,x_3,0),(0,x_2+1,x_3,0)) = I \otimes I \otimes \bar{T}_1^{1^*0} \otimes U_2^{++^*},\ x_2,x_3\ge 1,
\end{align*}
\begin{align*}
&Q((1,0,0,x_4),(0,1,0,x_4)) = I \otimes I \otimes \bar{T}_1^{1^*+} \otimes U_2^{00^*}, x_4\ge 1, \cr
&Q((1,0,x_3,x_4),(0,1,x_3,x_4)) = I \otimes I \otimes \bar{T}_1^{1^*+} \otimes U_2^{+0^*},\ x_3,x_4\ge 1, \cr
&Q((1,x_2,0,x_4),(0,x_2+1,0,x_4)) = I \otimes I \otimes \bar{T}_1^{1^*+} \otimes U_2^{0+^*},\ x_2,x_4\ge 1, \cr
&Q((1,x_2,x_3,x_4),(0,x_2+1,x_3,x_4)) = I \otimes I \otimes \bar{T}_1^{1^*+} \otimes U_2^{++^*},\ x_2,x_3,x_4\ge 1,
\end{align*}
\begin{align*}
&Q((x_1,0,0,0),(x_1-1,1,0,0)) = I \otimes I \otimes \bar{T}_1^{2^*0} \otimes U_2^{00^*},\ x_1\ge 2, \cr
&Q((x_1,0,x_3,0),(x_1-1,1,x_3,0)) = I \otimes I \otimes \bar{T}_1^{2^*0} \otimes U_2^{+0^*},\ x_1\ge 2,\ x_3\ge 1, \cr
&Q((x_1,x_2,0,0),(x_1-1,x_2+1,0,0)) = I \otimes I \otimes \bar{T}_1^{2^*0} \otimes U_2^{0+^*},\ x_1\ge 2,\ x_2\ge 1, \cr
&Q((x_1,x_2,x_3,0),(x_1-1,x_2+1,x_3,0)) = I \otimes I \otimes \bar{T}_1^{2^*0} \otimes U_2^{++^*},\ x_1\ge 2,\ x_2,x_3\ge 1, 
\end{align*}
\begin{align*}
&Q((x_1,0,0,x_4),(x_1-1,1,0,x_4)) = I \otimes I \otimes \bar{T}_1^{2^*+} \otimes U_2^{00^*},\ x_1\ge 2,\ x_4\ge 1, \cr
&Q((x_1,0,x_3,x_4),(x_1-1,1,x_3,x_4)) = I \otimes I \otimes \bar{T}_1^{2^*+} \otimes U_2^{+0^*},\ x_1\ge 2,\ x_3,x_4\ge 1, \cr
&Q((x_1,x_2,0,x_4),(x_1-1,x_2+1,0,x_4)) = I \otimes I \otimes \bar{T}_1^{2^*+} \otimes U_2^{0+^*},\ x_1\ge 2,\ x_2,x_4\ge 1, \cr
&Q((x_1,x_2,x_3,x_4),(x_1-1,x_2+1,x_3,x_4)) = I \otimes I \otimes \bar{T}_1^{2^*+} \otimes U_2^{++^*},\ x_1\ge 2,\ x_2,x_3,x_4\ge 1.
\end{align*}

{\it In the case where the service of a customer in Q$_2$ is completed:}
\begin{align*}
&Q((x_1,1,0,x_4),(x_1,0,0,x_4)) = I \otimes I \otimes I \otimes (1-p)\bar{T}_2^{01^*},\ x_1,x_4\ge 0, \cr
&Q((x_1,1,x_3,x_4),(x_1,0,x_3,x_4)) = I \otimes I \otimes I \otimes (1-p)\bar{T}_2^{+1^*},\ x_1,x_4\ge 0,\ x_3\ge 1, \cr
&Q((x_1,x_2,0,x_4),(x_1,x_2-1,0,x_4)) = I \otimes I \otimes I \otimes (1-p)\bar{T}_2^{02^*},\ x_1,x_4\ge 0,\ x_2\ge 2, \cr
&Q((x_1,x_2,x_3,x_4),(x_1,x_2-1,x_3,x_4)) = I \otimes I \otimes I \otimes (1-p)\bar{T}_2^{+2^*},\ x_1,x_4\ge 0,\ x_2\ge 2,\ x_3\ge 1, 
\end{align*}
\begin{align*}
&Q((x_1,1,0,x_4),(x_1,0,1,x_4)) = I \otimes I \otimes I \otimes p\,\bar{T}_2^{01^*} U_2^{0^*0},\ x_1,x_4\ge 0, \cr
&Q((x_1,1,x_3,x_4),(x_1,0,x_3+1,x_4)) = I \otimes I \otimes I \otimes p\,\bar{T}_2^{+1^*} U_2^{+^*0},\ x_1,x_4\ge 0,\ x_3\ge 1, \cr
&Q((x_1,x_2,0,x_4),(x_1,x_2-1,1,x_4)) = I \otimes I \otimes I \otimes p\,\bar{T}_2^{02^*} U_2^{0^*+},\ x_1,x_4\ge 0,\ x_2\ge 2, \cr
&Q((x_1,x_2,x_3,x_4),(x_1,x_2-1,x_3+1,x_4)) = I \otimes I \otimes I \otimes p\,\bar{T}_2^{+2^*} U_2^{+^*+},\ x_1,x_4\ge 0,\ x_2\ge 2,\ x_3\ge 1.
\end{align*}

{\it In the case where the service of a customer in Q$_3$ is completed:}
\begin{align*}
&Q((0,0,1,0),(0,0,0,1)) = I \otimes I \otimes U_1^{00^*} \otimes \bar{T}_2^{1^*0}, \cr
&Q((0,0,1,x_4),(0,0,0,x_4+1)) = I \otimes I \otimes U_1^{0+^*} \otimes \bar{T}_2^{1^*0},\ x_4\ge 1, \cr
&Q((x_1,0,1,0),(x_1,0,0,1)) = I \otimes I \otimes U_1^{+0^*} \otimes \bar{T}_2^{1^*0},\ x_1\ge 1, \cr
&Q((x_1,0,1,x_4),(x_1,0,0,x_4+1)) = I \otimes I \otimes U_1^{++^*} \otimes \bar{T}_2^{1^*0},\ x_1,x_4\ge 1, 
\end{align*}
\begin{align*}
&Q((0,x_2,1,0),(0,x_2,0,1)) = I \otimes I \otimes U_1^{00^*} \otimes \bar{T}_2^{1^*+}, x_2\ge 1, \cr
&Q((0,x_2,1,x_4),((0,x_2,0,x_4+1)) = I \otimes I \otimes U_1^{0+^*} \otimes \bar{T}_2^{1^*+},\ x_2,x_4\ge 1, \cr
&Q((x_1,x_2,1,0),(x_1,x_2,0,1)) = I \otimes I \otimes U_1^{+0^*} \otimes \bar{T}_2^{1^*+},\ x_1,x_2\ge 1, \cr
&Q((x_1,x_2,1,x_4),(x_1,x_2,0,x_4+1)) = I \otimes I \otimes U_1^{++^*} \otimes \bar{T}_2^{1^*+},\ x_1,x_2,x_4\ge 1, 
\end{align*}
\begin{align*}
&Q((0,0,x_3,0),(0,0,x_3-1,1)) = I \otimes I \otimes U_1^{00^*} \otimes \bar{T}_2^{2^*0},\ x_3\ge 2, \cr
&Q((0,0,x_3,x_4),(0,0,x_3-1,x_4+1)) = I \otimes I \otimes U_1^{0+^*} \otimes \bar{T}_2^{2^*0},\ x_3\ge 2,\ x_4\ge 1, \cr
&Q((x_1,0,x_3,0),(x_1,0,x_3-1,1)) = I \otimes I \otimes U_1^{+0^*} \otimes \bar{T}_2^{2^*0},\ x_3\ge 2,\ x_1\ge 1, \cr
&Q((x_1,0,x_3,x_4),(x_1,0,x_3-1,x_4+1)) = I \otimes I \otimes U_1^{++^*} \otimes \bar{T}_2^{2^*0},\ x_3\ge 2,\ x_1,x_4\ge 1, 
\end{align*}
\begin{align*}
&Q((0,x_2,x_3,0),(0,x_2,x_3-1,1)) = I \otimes I \otimes U_1^{00^*} \otimes \bar{T}_2^{2^*+},\ x_3\ge 2,\ x_2\ge 1, \cr
&Q((0,x_2,x_3,x_4),(0,x_2,x_3-1,x_4+1)) = I \otimes I \otimes U_1^{0+^*} \otimes \bar{T}_2^{2^*+},\ x_3\ge 2,\ x_2,x_4\ge 1, \cr
&Q((x_1,x_2,x_3,0),(x_1,x_2,x_3-1,1)) = I \otimes I \otimes U_1^{+0^*} \otimes \bar{T}_2^{2^*+},\ x_3\ge 2,\ x_1,x_2\ge 1, \cr
&Q(((x_1,x_2,x_3,x_4),(x_1,x_2,x_3-1,x_4+1)) = I \otimes I \otimes U_1^{++^*} \otimes \bar{T}_2^{2^*+},\ x_3\ge 2,\ x_1,x_2,x_4\ge 1.
\end{align*}

{\it In the case where the service of a customer in Q$_4$ is completed:}
\begin{align*}
&Q((0,x_2,x_3,1),(0,x_2,x_3,0)) = I \otimes I \otimes \bar{T}_1^{01^*} \otimes I,\ x_2,x_3\ge 0, \cr
&Q((x_1,x_2,x_3,1),(0,x_2,x_3,0)) = I \otimes I \otimes \bar{T}_1^{+1^*} \otimes I,\ x_1\ge 1,\ x_2,x_3\ge 0, \cr
&Q((0,x_2,x_3,x_4),(0,x_2,x_3,x_4-1)) = I \otimes I \otimes \bar{T}_1^{02^*} \otimes I,\ x_2,x_3\ge 0,\ x_4\ge 2, \cr
&Q((x_1,x_2,x_3,x_4),(x_1,x_2,x_3,x_4-1)) = I \otimes I \otimes \bar{T}_1^{+2^*} \otimes I,\ x_1\ge 1,\ x_2,x_3\ge 0,\ x_4\ge 2.
\end{align*}

{\it Diagonal blocks:}
\begin{align*}
&Q((0,0,0,0),(0,0,0,0)) = \bar{C}_1 \oplus \bar{C}_3 \oplus \bar{T}_1^{00} \oplus \bar{T}_2^{00}, \cr
&Q((0,0,0,x_4),(0,0,0,x_4)) = \bar{C}_1 \oplus \bar{C}_3 \oplus \bar{T}_1^{0+} \oplus \bar{T}_2^{00},\ x_4\ge 1, \cr
&Q((0,0,x_3,0),(0,0,x_3,0)) = \bar{C}_1 \oplus \bar{C}_3 \oplus \bar{T}_1^{00} \oplus \bar{T}_2^{+0},\ x_3\ge 1, \cr
&Q((0,0,x_3,x_4),(0,0,x_3,x_4)) = \bar{C}_1 \oplus \bar{C}_3 \oplus \bar{T}_1^{0+} \oplus \bar{T}_2^{+0},\ x_3,x_4\ge 1, 
\end{align*}
\begin{align*}
&Q((0,x_2,0,0),(0,x_2,0,0)) = \bar{C}_1 \oplus \bar{C}_3 \oplus \bar{T}_1^{00} \oplus \bar{T}_2^{0+},\ x_2\ge 1, \cr
&Q((0,x_2,0,x_4),(0,x_2,0,x_4)) = \bar{C}_1 \oplus \bar{C}_3 \oplus \bar{T}_1^{0+} \oplus \bar{T}_2^{0+},\ x_2,x_4\ge 1, \cr
&Q((0,x_2,x_3,0),(0,x_2,x_3,0)) = \bar{C}_1 \oplus \bar{C}_3 \oplus \bar{T}_1^{00} \oplus \bar{T}_2^{++},\ x_2,x_3\ge 1, \cr
&Q((0,x_2,x_3,x_4),(0,x_2,x_3,x_4)) = \bar{C}_1 \oplus \bar{C}_3 \oplus \bar{T}_1^{0+} \oplus \bar{T}_2^{++},\ x_2,x_3,x_4\ge 1, 
\end{align*}
\begin{align*}
&Q((x_1,0,0,0),(x_1,0,0,0)) = \bar{C}_1 \oplus \bar{C}_3 \oplus \bar{T}_1^{+0} \oplus \bar{T}_2^{00},\ x_1\ge 1, \cr
&Q((x_1,0,0,x_4),(x_1,0,0,x_4)) = \bar{C}_1 \oplus \bar{C}_3 \oplus \bar{T}_1^{++} \oplus \bar{T}_2^{00},\ x_1,x_4\ge 1, \cr
&Q((x_1,0,x_3,0),((x_1,0,x_3,0)) = \bar{C}_1 \oplus \bar{C}_3 \oplus \bar{T}_1^{+0} \oplus \bar{T}_2^{+0},\ x_1,x_3\ge 1, \cr
&Q((x_1,0,x_3,x_4),(x_1,0,x_3,x_4)) = \bar{C}_1 \oplus \bar{C}_3 \oplus \bar{T}_1^{++} \oplus \bar{T}_2^{+0},\ x_1,x_3,x_4\ge 1, 
\end{align*}
\begin{align*}
&Q((x_1,x_2,0,0),(x_1,x_2,0,0)) = \bar{C}_1 \oplus \bar{C}_3 \oplus \bar{T}_1^{+0} \oplus \bar{T}_2^{0+},\ x_1,x_2\ge 1, \cr
&Q((x_1,x_2,0,x_4),(x_1,x_2,0,x_4)) = \bar{C}_1 \oplus \bar{C}_3 \oplus \bar{T}_1^{++} \oplus \bar{T}_2^{0+},\ x_1,x_2,x_4\ge 1, \cr
&Q((x_1,x_2,x_3,0),(x_1,x_2,x_3,0)) = \bar{C}_1 \oplus \bar{C}_3 \oplus \bar{T}_1^{+0} \oplus \bar{T}_2^{++},\ x_1,x_2,x_3\ge 1, \cr
&Q((x_1,x_2,x_3,x_4),(x_1,x_2,x_3,x_4)) = \bar{C}_1 \oplus \bar{C}_3 \oplus \bar{T}_1^{++} \oplus \bar{T}_2^{++},\ x_1,x_2,x_3,x_4\ge 1. 
\end{align*}

%
%
\section{The mean output rates of $\{\bY^A(t)\}$} \label{sec:output}

In this paper, we consider $\{\bY^A(t)\}$ only for $A=N,\{1,2,3\},\{1,3,4\},\{1,4\},\{2,3\}$. Hence, we state the expressions of the mean output rates only for such $A$'s. 

\medskip
In the case of $A=N=\{1,2,3,4\}$:
\begin{align*}
&\bar{\mu}_1^N = \bpi^N (I\otimes I\otimes \bar{T}_1^{2^*+}\otimes I)\,\bone, \quad 
\bar{\mu}_2^N = \bpi^N (I\otimes I\otimes I\otimes \bar{T}_2^{+2^*})\,\bone, \cr
&\bar{\mu}_3^N = \bpi^N (I\otimes I\otimes I\otimes \bar{T}_2^{2^*+})\,\bone, \quad 
\bar{\mu}_4^N = \bpi^N (I\otimes I\otimes \bar{T}_1^{+2^*}\otimes I)\,\bone.
\end{align*}

In the case of $A=\{1,2,3\}$:
\begin{align*}
&\bar{\mu}_1^{\{1,2,3\}} = \bpi^{\{1,2,3\}}(0) (I\otimes I\otimes \bar{T}_1^{2^*0}\otimes I)\,\bone 
+ \sum_{x_4=1}^\infty \bpi^{\{1,2,3\}}(x_4) (I\otimes I\otimes \bar{T}_1^{2^*+}\otimes I)\,\bone, \cr
&\bar{\mu}_2^{\{1,2,3\}} = \sum_{x_4=0}^\infty \bpi^{\{1,2,3\}}(x_4) (I\otimes I\otimes I\otimes \bar{T}_2^{+2^*})\,\bone, \cr
&\bar{\mu}_3^{\{1,2,3\}} = \sum_{x_4=0}^\infty \bpi^{\{1,2,3\}}(x_4) (I\otimes I\otimes I\otimes \bar{T}_2^{2^*+})\,\bone, \cr
&\bar{\mu}_4^{\{1,2,3\}} = \bpi^{\{1,2,3\}}(1) (I\otimes I\otimes \bar{T}_1^{+1^*}\otimes I)\,\bone 
+ \sum_{x_4=2}^\infty \bpi^{\{1,2,3\}}(x_4) (I\otimes I\otimes \bar{T}_1^{+2^*}\otimes I)\,\bone.
\end{align*}

In the case of $A=\{1,3,4\}$:
\begin{align*}
&\bar{\mu}_1^{\{1,3,4\}} = \sum_{x_2=0}^\infty \bpi^{\{1,3,4\}}(x_2) (I\otimes I\otimes \bar{T}_1^{2^*+}\otimes I)\,\bone, \cr
&\bar{\mu}_2^{\{1,3,4\}} = \bpi^{\{1,3,4\}}(1) (I\otimes I\otimes I\otimes \bar{T}_2^{+1^*})\,\bone 
+ \sum_{x_2=2}^\infty \bpi^{\{1,3,4\}}(x_2) (I\otimes I\otimes I\otimes \bar{T}_2^{+2^*})\,\bone, \cr
&\bar{\mu}_3^{\{1,3,4\}} = \bpi^{\{1,3,4\}}(0) (I\otimes I\otimes I\otimes \bar{T}_2^{2^*0})\,\bone 
+ \sum_{x_2=1}^\infty \bpi^{\{1,3,4\}}(x_2) (I\otimes I\otimes I\otimes \bar{T}_2^{2^*+})\,\bone, \cr
&\bar{\mu}_4^{\{1,3,4\}} = \sum_{x_2=0}^\infty \bpi^{\{1,3,4\}}(x_2) (I\otimes I\otimes \bar{T}_1^{+2^*}\otimes I)\,\bone.
\end{align*}

In the case of $A=\{1,4\}$:
\begin{align*}
&\bar{\mu}_1^{\{1,4\}} = \sum_{x_2=0}^\infty \sum_{x_3=0}^\infty \bpi^{\{1,4\}}(x_2,x_3) (I\otimes I\otimes \bar{T}_1^{2^*+}\otimes I)\,\bone, \cr
&\bar{\mu}_2^{\{1,4\}} = \bpi^{\{1,4\}}(1,0) (I\otimes I\otimes I\otimes \bar{T}_2^{01^*})\,\bone 
+ \sum_{x_3=1}^\infty \bpi^{\{1,4\}}(1,x_3) (I\otimes I\otimes I\otimes \bar{T}_2^{+1^*})\,\bone \cr
&\qquad +\sum_{x_2=2}^\infty \bpi^{\{1,4\}}(x_2,0) (I\otimes I\otimes I\otimes \bar{T}_2^{02^*})\,\bone 
+ \sum_{x_2=2}^\infty \sum_{x_3=1}^\infty \bpi^{\{1,4\}}(x_2,x_3) (I\otimes I\otimes I\otimes \bar{T}_2^{+2^*})\,\bone, \cr
&\bar{\mu}_3^{\{1,4\}} = \bpi^{\{1,4\}}(0,1) (I\otimes I\otimes I\otimes \bar{T}_2^{1^*0})\,\bone 
+ \sum_{x_2=1}^\infty \bpi^{\{1,4\}}(x_2,1) (I\otimes I\otimes I\otimes \bar{T}_2^{1^*+})\,\bone \cr
&\qquad +\sum_{x_3=2}^\infty \bpi^{\{1,4\}}(0,x_3) (I\otimes I\otimes I\otimes \bar{T}_2^{2^*0})\,\bone 
+ \sum_{x_2=1}^\infty \sum_{x_3=2}^\infty \bpi^{\{1,4\}}(x_2,x_3) (I\otimes I\otimes I\otimes \bar{T}_2^{2^*+})\,\bone, \cr
&\bar{\mu}_4^{\{1,4\}} = \sum_{x_2=0}^\infty \sum_{x_3=0}^\infty \bpi^{\{1,4\}}(x_2,x_3) (I\otimes I\otimes \bar{T}_1^{+2^*}\otimes I)\,\bone.
\end{align*}

In the case of $A=\{2,3\}$:
\begin{align*}
&\bar{\mu}_1^{\{2,3\}} = \bpi^{\{2,3\}}(1,0) (I\otimes I\otimes \bar{T}_1^{1^*0}\otimes I)\,\bone 
+ \sum_{x_4=1}^\infty \bpi^{\{2,3\}}(1,x_4) (I\otimes I\otimes \bar{T}_1^{1^*+}\otimes I)\,\bone \cr
&\qquad +\sum_{x_1=2}^\infty \bpi^{\{2,3\}}(x_1,0) (I\otimes I\otimes \bar{T}_1^{2^*0}\otimes I)\,\bone 
+ \sum_{x_1=2}^\infty \sum_{x_4=1}^\infty \bpi^{\{2,3\}}(x_1,x_4) (I\otimes I\otimes \bar{T}_1^{2^*+}\otimes I)\,\bone, \cr
&\bar{\mu}_2^{\{2,3\}} = \sum_{x_1=0}^\infty \sum_{x_4=0}^\infty \bpi^{\{2,3\}}(x_1,x_4) (I\otimes I\otimes I\otimes \bar{T}_2^{+2^*})\,\bone, \cr
&\bar{\mu}_3^{\{2,3\}} = \sum_{x_1=0}^\infty \sum_{x_4=0}^\infty \bpi^{\{2,3\}}(x_1,x_4) (I\otimes I\otimes I\otimes \bar{T}_2^{2^*+})\,\bone, \cr
&\bar{\mu}_4^{\{2,3\}} = \bpi^{\{2,3\}}(0,1) (I\otimes I\otimes \bar{T}_1^{01^*}\otimes I)\,\bone 
+ \sum_{x_1=1}^\infty \bpi^{\{2,3\}}(x_1,1) (I\otimes I\otimes \bar{T}_1^{+1^*}\otimes I)\,\bone \cr
&\qquad +\sum_{x_4=2}^\infty \bpi^{\{2,3\}}(0,x_4) (I\otimes I\otimes \bar{T}_1^{02^*}\otimes I)\,\bone 
+ \sum_{x_1=1}^\infty \sum_{x_4=2}^\infty \bpi^{\{2,3\}}(x_1,x_4) (I\otimes I\otimes \bar{T}_1^{+2^*}\otimes I)\,\bone. 
\end{align*}

\end{document}